\numberwithin{equation}{section}
\newtheorem{thm}{Theorem}[section]
\newtheorem*{thm*}{Theorem}
\newtheorem{prop}[thm]{Proposition}
\newtheorem*{prop*}{Proposition}
\newtheorem{question}[thm]{Question}
\newtheorem{cor}[thm]{Corollary}
\newtheorem{defin}[thm]{Definition}
\newtheorem{lemma}[thm]{Lemma}
\newtheorem{example}[thm]{Example}
\newtheorem{remark}[thm]{Remark}
\newtheorem*{remark*}{Remark}
\newtheorem*{remarks*}{Remarks}
\newcommand{\ip}[1]{\langle #1 \rangle}
\newcommand{\restrictto}[2]{\left. #1 \right|_{#2}}
\newcommand{\ddtat}{\restrictto{\frac{d}{dt}}{t=0}}
\newtheorem*{GAC}{Generalized Alekseevsky Conjecture}
\begin{document}

\title[Homogeneous {R}icci solitons]{Homogeneous {R}icci solitons}
\author[Michael Jablonski]{Michael Jablonski
}
\thanks{
    This work was supported in part by NSF grant DMS-1105647.}
\date{April 22, 2013}

\begin{abstract} In this work, we study metrics which are both homogeneous and Ricci soliton.  If there exists a transitive solvable group of isometries on a Ricci soliton, we show that it is isometric to a solvsoliton.  Moreover, unless the manifold is flat, it is necessarily simply-connected and diffeomorphic to $\mathbb R^n$.

In the general case, we prove that homogeneous Ricci solitons must be semi-algebraic Ricci solitons in the sense that they evolve under the Ricci flow by dilation and pullback by automorphisms of the isometry group.   In the special case that there exists a transitive semi-simple group of isometries on a Ricci soliton, we show that such a space is in fact Einstein.  In the compact case, we produce new proof that Ricci solitons are necessarily Einstein.

Lastly, we characterize  solvable Lie groups which admit Ricci soliton metrics.

\end{abstract}

\maketitle

\section{Introduction}
A Ricci soliton metric $g$ on a manifold $M$ is a Riemannian metric satisfying
    $$ 
    ric_g = cg + \mathcal L_X g 
    $$
for some (complete) smooth vector field $X$ on $M$ and $c\in \mathbb R$.  Such metrics correspond to  solutions of the Ricci flow which evolve by diffeomorphism and dilation.  Our interest is in homogeneous Ricci solitons.

Homogeneous solutions to the Ricci flow have long been studied for both their relative simplicity and their appearance as limits of the flow.  
This has been explored by many authors (e.g.,  \cite{IsenbergJackson:RicciFlowLocHomogGeomClosedMflds, IsenbergJacksonLu:RicciFlowLocHomog4mfld,
Guenther-Isenberg-Knopf:LinearStabilityOfHomogRicciSolitons,
Lott:LongtimeBehavOfTypeIIIRicciFlow,
BairdDanielo:ThreeDimRicciSol,
GlickensteinPayne:RicciFlow3dimUnimodLieAlgs,
Petersen-Wylie:OnGradientRicciSolitonsWithSymmetry,
Payne:TheRicciFlowForNilmanifolds,
Lauret:RicciFlowForSimplyConnectedNilmanifolds,
Lott:DimReductionAndLongTimeBehaviorOfRicciFlow,
Lauret:RicciFlowOfHomogeneousManifoldsAndItsSolitons},  see also \cite[Chapter 1]{ChowKnopf}).   Furthermore, homogeneous Ricci soliton metrics naturally arise as a  preferred choice of metric in the absence of Einstein metrics.  For example, while nilmanifolds are unable to admit Einstein metrics \cite{Jensen:HomogEinsteinSpacesofDim4}, they  often admit Ricci soliton metrics \cite{Jablo:ModuliOfEinsteinAndNoneinstein},  Ricci solitons are measurably the closest one can be to Einstein \cite{LauretNilsoliton}, and such metrics have maximal symmetry when they exist \cite{Jablo:ConceringExistenceOfEinstein}.

The classification of homogeneous Ricci soliton spaces has been a central problem for many years, motivating much of the recent work on these spaces.  If a homogeneous Ricci soliton is shrinking or steady (i.e. $c\geq 0$),  it must be either a compact  Einstein manifold or the product of a compact  Einstein manifold with Euclidean space \cite{Naber:NoncompactShrinking4SolitonsWithNonnegativeCurvature,Petersen-Wylie:OnGradientRicciSolitonsWithSymmetry}.  Thus, excluding the classification of compact homogeneous Einstein spaces (which has long been an open problem), we are reduced to classifying and understanding  expanding homogeneous Ricci solitons (i.e. $c<0$).

To date, all known examples of expanding homogeneous Ricci solitons are isometric to algebraic Ricci soliton metrics on solvable Lie groups, defined as follows.  Given a Lie group $G$, a left-invariant metric $g$ is called an \emph{algebraic Ricci soliton} if there exists a derivation $D\in Der(\mathfrak g)$ satisfying the 
condition
	$$
	Ric = cId + D
	$$
where $Ric$ is the $(1,1)$-Ricci tensor and  $\mathfrak g = Lie~G$.  Such metrics are indeed Ricci solitons and in the special case that $G$ is solvable, these have been called solvsolitons in the literature.  We note that the definition of being an algebraic Ricci soliton is dependent on the transitive group $G$ chosen - it is possible for a Ricci soliton to be algebraic with respect to one group and not another; see Section \ref{sec: examples of non algebraic}  for examples of non-algebraic Ricci solitons on solvmanifolds.  
Recall that  a Riemannian manifold $\mathcal M = (M,g)$ is called a \emph{solvmanifold} if there exists a transitive solvable group of isometries.

\begin{thm}\label{thm: Ricci soliton solvmanifolds are isometric to solsolitons} Let $\mathcal M$ be a solvmanifold which is a 
Ricci soliton.  Then $\mathcal M$ is isometric to a solvsoliton 
and the transitive solvable group may be chosen to be completely solvable.  Moreover, unless $\mathcal M$ is flat, it is simply-connected and diffeomorphic to $\mathbb R^n$.
\end{thm}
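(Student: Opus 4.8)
The plan is to realize $\mathcal M$ as a left-invariant metric on a solvable Lie group and then upgrade the Ricci soliton equation $ric_g = cg + \mathcal L_X g$ to the \emph{algebraic} condition $Ric = cId + D$ defining a solvsoliton, after which the structural conclusions follow from Lauret's classification. First I would appeal to the structure theory of isometry groups of solvmanifolds developed by Gordon and Wilson. Beginning with the given transitive solvable group $S \leq \mathrm{Isom}(\mathcal M)$, one modifies it --- replacing $S$ by another solvable subgroup of the isometry group without changing the metric --- to obtain a solvable group $S'$ acting \emph{simply} transitively, so that $\mathcal M$ is isometric to $(S',g')$ with $g'$ left-invariant. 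The essential output of this theory that I would isolate is a rigidity statement: up to conjugacy inside $\mathrm{Isom}(\mathcal M)$, the solvable group in standard position acting transitively is unique. This uniqueness is precisely what will force the soliton vector field to be algebraic.

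The heart of the matter, and the main obstacle, is the second step: algebraicizing the soliton. The soliton equation says the Ricci flow issuing from $g'$ is self-similar, $g(t) = c(t)\,\phi_t^* g'$ for a one-parameter family of diffeomorphisms $\phi_t$ generated by $X$. Since the Ricci flow preserves the homogeneous structure, each $g(t)$ is again a left-invariant metric realizing the same homogeneous space up to scaling, so $\phi_t$ carries the transitive solvable group $S'$ to another group of the same type. Invoking the uniqueness from the first step, I would argue that after composing with isometries the $\phi_t$ may be taken to normalize $S'$, hence $\phi_t \in \mathrm{Aut}(S')$. Because $S'$ is simply connected one has $\mathrm{Aut}(S') \cong \mathrm{Aut}(\mathfrak s')$, and differentiating at $t=0$ yields a derivation $D \in \mathrm{Der}(\mathfrak s')$ whose symmetric part accounts for $\mathcal L_X g'$; substituting back into the soliton equation gives $Ric = cId + D$. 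The difficulty here is genuine: $X$ is a priori an arbitrary complete vector field unrelated to the group structure, so the entire argument hinges on controlling $\phi_t$ modulo the ambiguity of isometries and verifying that the induced derivation is well defined.

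Once $(S',g')$ is known to be a solvsoliton, I would invoke Lauret's structure theory to finish. Every solvsoliton is standard and admits a normal form on a simply connected \emph{completely solvable} Lie group, which gives the asserted strengthening that the transitive group may be chosen completely solvable. As a manifold such a group is diffeomorphic to $\mathbb R^n$, and a simply transitive action then identifies $\mathcal M$ with it, yielding that $\mathcal M$ is simply-connected and diffeomorphic to $\mathbb R^n$.

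The flat case must be excluded at exactly this point, and I would treat it separately. A flat solvmanifold, such as a torus, may fail to admit a simply transitive completely solvable group of isometries and need not be simply-connected, so the passage to a simply connected completely solvable presentation is available only in the non-flat case; the derivation $D$ produced in the second step is what rules out toral factors and forces simple connectivity once flatness is assumed away.
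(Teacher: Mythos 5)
Your outline for the simply-connected case follows the paper's route: pass to the solvable group in standard position (Gordon--Wilson), use uniqueness of that group together with the fact that the Ricci flow preserves the isometry group to conclude the flow evolves by automorphisms of $S'$, and differentiate to get a derivation. However, there are two genuine gaps. First, what this argument produces is only the \emph{semi-algebraic} condition $Ric = cId + \tfrac{1}{2}(D+D^t)$: the Lie derivative contributes the symmetric part of $D$, and the symmetric part of a derivation is not in general a derivation. Writing ``$Ric = cId + D$'' at this point asserts exactly what still has to be proved. The paper devotes a separate lemma to showing that for solvable $S$ the endomorphism $\tfrac{1}{2}(D+D^t)$ is itself a derivation, and the proof is not formal --- it requires reworking Lauret's arguments for solvsolitons (his Prop.~4.6 when $c\ge 0$ and Thm.~4.8 when $c<0$) with the hypothesis $Ric = cId + S(D)$ in place of $Ric = cId + D$. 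Without this step you do not get a solvsoliton, only a semi-algebraic soliton.

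Second, and more seriously, your opening move --- realizing $\mathcal M$ as a left-invariant metric on a solvable Lie group acting \emph{simply} transitively --- is only available when $\mathcal M$ is simply-connected. For a general solvmanifold one only obtains $\mathcal M \simeq R/C$ with $R$ simply-connected solvable and $C$ a nontrivial discrete subgroup of $\{x \in R : Ad(x) \in O(\mathfrak r)\}$; there exist solvmanifolds that are not Lie groups. So the simple connectivity assertion is not a corollary of the classification you reach at the end; it is a prerequisite for reaching it, and your one-sentence claim that ``the derivation $D$ \dots rules out toral factors and forces simple connectivity'' does not constitute an argument. In the paper this is the content of an entire section: one lifts the soliton to the universal cover, shows the soliton diffeomorphisms fix the fiber $\pi^{-1}(p)=C$ pointwise, deduces $\log(c\cdot\widetilde p)\in Ker\,D$ for $c\in C$ by letting $t\to\pm\infty$, analyzes $Ker\,D$ via the mean curvature vector and the pre-Einstein derivation, and then uses the orthogonality condition $Ad(c)\in O(\mathfrak r,\langle\,,\rangle')$ together with properties of the modification map $\phi$ to force $C$ into the isotropy group. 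None of that is recoverable from your sketch, so as written the ``moreover'' clause of the theorem is unproved.
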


\begin{remark} This affirmatively resolves the generalized Alekseevsky conjecture among solvmanifolds, which states the following.
\end{remark}

\begin{GAC}
Let $M = G/H$ be a homogeneous Ricci soliton with $c<0$.  Then $H$ is a maximal compact subgroup of $G$ and, consequently, $M$ is diffeomorphic to $\mathbb R^n$.
\end{GAC}

Special cases of this theorem have appeared in the literature. The above theorem has been obtained for nilpotent groups in \cite{LauretNilsoliton,LauretLafuente:OnhomogeneousRiccisolitons}.  However, we warn the reader that a nilmanifold is able to admit quotients which are not nilpotent groups, but which are still solvmanifolds.

In the special case of solvsolitons, it was previously known that only simply-connected solvable Lie groups can appear, see \cite[Remark 4.12]{Lauret:SolSolitons}.  However, there do exist solvmanifolds which are not solvable groups.

In the special case of Einstein solvmanifolds, the fact that these must be simply-connected is new.  (In the further special case of negative curvature, this result follows from \cite{Alekseevski:RiemSpacesOfNegCurv}.)

In addition to the case of solvmanifolds, we have the following reductions to Einstein case.

\begin{thm}\label{thm: Ricci soliton with transitive semi-simple implies Einstein} 
Homogeneous Ricci solitons admitting a transitive semi-simple group of isometries are necessarily Einstein.
\end{thm}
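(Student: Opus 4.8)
The plan is to combine the semi-algebraic soliton structure established earlier with the rigidity of automorphisms of semisimple Lie groups. Write $\mathcal M = (M,g)$ with transitive semisimple $G \leq I(\mathcal M)$ and compact isotropy $H$, and fix a reductive decomposition $\mathfrak i = \mathfrak h \oplus \mathfrak m$ of the full isometry algebra, identifying $\mathfrak m \cong T_pM$. By the preceding reduction, $\mathcal M$ is a semi-algebraic Ricci soliton: the Ricci flow solution through $g$ satisfies $g(t) = c(t)\,\phi(t)^* g$ for a smooth path $\phi(t) \in \mathrm{Aut}(I(\mathcal M))$ with $\phi(0) = \mathrm{id}$, and infinitesimally this yields a derivation $D \in \mathrm{Der}(\mathfrak i)$ with $\mathrm{Ric} = cI + \tfrac12(D_{\mathfrak m} + D_{\mathfrak m}^t)$, where $D_{\mathfrak m} = \mathrm{pr}_{\mathfrak m}\circ D|_{\mathfrak m}$. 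The theorem is equivalent to showing that the symmetric part $S := \tfrac12(D_{\mathfrak m}+D_{\mathfrak m}^t)$ vanishes. The shrinking and steady cases $c\geq 0$ are immediate from the cited structure results, since a transitive semisimple group admits no Euclidean de Rham factor; so I focus on the expanding case.

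The guiding principle is that \emph{if $\phi(t)$ may be taken to consist of isometries, then $\mathcal M$ is Einstein}. Indeed, if $\phi(t)^*g$ is isometric to $g$ for all $t$, then $g(t)$ is, up to isometry, the pure rescaling $c(t)g$; differentiating the Ricci flow equation $\dot g = -2\,\mathrm{ric}$ at $t=0$ gives $-2\,\mathrm{ric}_g = c'(0)g + \mathcal L_Y g$ with $Y$ the Killing field generating the isometries, and $\mathcal L_Y g = 0$ forces $\mathrm{ric}_g = -\tfrac12 c'(0)\,g$. Thus it suffices to see that the evolving automorphisms act on $(M,g)$ trivially up to isometry, equivalently that $S = 0$.

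This is where semisimplicity enters. First I would establish that $\mathfrak i$ is reductive: since transitivity gives $\mathfrak i = \mathfrak g + \mathfrak h$ with $\mathfrak g$ semisimple and $\mathfrak h$ the algebra of a compact isotropy group, one shows the radical of $\mathfrak i$ is central. On such an algebra the identity component $\mathrm{Aut}(\mathfrak i)^0$ is generated by inner automorphisms together with the linear action on the center, while $\mathrm{Out}$ of the semisimple part is finite; in particular the semisimple summand admits only inner automorphisms in the identity component. Because the inner automorphism $\mathrm{Inn}_\psi = \mathrm{Ad}(\psi)$ by $\psi \in I(\mathcal M)$ sends the isotropy at $p$ to the isotropy at $\psi\cdot p$, it realizes one of $\mathcal M$'s own isometries, so $\mathrm{Inn}_\psi^*g$ is isometric to $g$. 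The remaining task is to rule out a nontrivial contribution of $D$ along the center of $\mathfrak i$: a central direction lying in $\mathfrak m$ would produce a parallel vector field and hence a Euclidean de Rham factor on which the semisimple $G$ would have to act transitively by isometries, which is impossible since a semisimple group has no nontrivial homomorphism onto a translation group. Hence $D$ lies in the inner part and $S = 0$, giving $\mathrm{ric}_g = cg$.

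The main obstacle I anticipate is precisely this structural analysis of the full isometry algebra $\mathfrak i$ relative to the transitive semisimple subalgebra $\mathfrak g$: proving reductivity of $\mathfrak i$, pinning down which derivations persist in $\mathrm{Aut}(\mathfrak i)^0$, and excluding the central/Euclidean contribution. A complementary check that supports the conclusion is the trace computation: for any inner $D = \mathrm{ad}(Z)$ with $Z = Z_{\mathfrak h} + Z_{\mathfrak m}$, skew-symmetry of the isotropy action on $\mathfrak m$ together with the reductive relation $[\mathfrak h,\mathfrak m]\subseteq\mathfrak m$ and the tracelessness of $\mathrm{ad}$ on a reductive algebra give $\mathrm{tr}\,D_{\mathfrak m} = 0$, so $\mathrm{tr}\,S = R - cn = 0$, i.e. $R = cn$. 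This does not by itself force $S=0$, but combined with the pure-rescaling principle it confirms the only consistent outcome is $S=0$, whence $\mathcal M$ is Einstein.
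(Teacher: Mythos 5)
Your reduction to the structure of the isometry algebra (reductivity via the compact-radical argument, passing to the maximal semisimple normal transitive subgroup, the fact that the relevant derivation is inner, and the trace identity $\mathrm{tr}\,D_{\mathfrak m}=0$) all matches the paper. But the crux of your argument --- that because $D=\mathrm{ad}(Z)$ is inner, the induced diffeomorphisms are isometries of $(M,g)$, hence $S=0$ --- is a genuine gap, and the claim is false as stated. You conflate two different things: ``$\mathrm{Inn}_\psi^*g$ is \emph{isometric} to $g$'' is vacuously true for \emph{any} diffeomorphism $F$ (via $F$ itself), and gives nothing; what your pure-rescaling principle actually requires is $\mathrm{Inn}_\psi^*g=g$ as tensors. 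The diffeomorphism of $G/K$ induced by $C_\psi$ \emph{that fixes the basepoint} $p$ (which is the one appearing in the soliton evolution, since $\mathrm{Ric}$ is computed at $T_pM\cong\mathfrak g/\mathfrak k$) is $hK\mapsto \psi h\psi^{-1}K$, i.e.\ $L_\psi\circ R_{\psi^{-1}}$, whose differential at $p$ is $\mathrm{Ad}(\psi)|_{\mathfrak g/\mathfrak k}$; this is not orthogonal in general, so it is not an isometry. The map that ``sends the isotropy at $p$ to the isotropy at $\psi\cdot p$'' and realizes an isometry is $L_\psi$ itself, which moves the basepoint and is \emph{not} the soliton diffeomorphism. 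If inner automorphisms automatically induced isometries, the theorem would be content-free; it is not.

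What actually closes the argument in the paper is a variational trace computation: since scalar curvature is a Riemannian invariant and constant on a homogeneous space, $sc(\phi_t^*g)=sc(g)$ for all $t$, so $\ddtat sc(\phi_t^*g)=0$. Combining this with the formula $(\mathrm{grad}\,sc)_g=-ric_g$ on the space of $G$-invariant metrics (valid because the transitive semisimple group is unimodular), the soliton equation $\mathrm{Ric}=cI+S$ with $S=\tfrac12[(\mathrm{ad}\,X)_{\mathfrak h/\mathfrak k}+(\mathrm{ad}\,X)^t_{\mathfrak h/\mathfrak k}]$, and the trace lemma $\mathrm{tr}\,(\mathrm{ad}\,X)_{\mathfrak h/\mathfrak k}=0$, one gets $0=\mathrm{tr}\,S^2$, hence $S=0$. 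Your own trace identity $\mathrm{tr}\,S=0$ is the right ingredient but, as you note, does not force $S=0$ by itself; the missing idea is to differentiate the scalar curvature along the orbit $\phi_t^*g$ rather than to argue that the orbit is a single isometry class in the pointed sense.
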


\begin{thm}\label{cor: compact homog are einstein} Compact homogeneous Ricci solitons are necessarily Einstein.
\end{thm}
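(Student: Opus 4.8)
The plan is to show that the soliton field $X$ forces $\mathcal{L}_X g = 0$, so that $ric_g = cg$ is Einstein. Two facts make the compact case tractable: homogeneity forces the scalar curvature $R$ to be constant, and compactness then pins down this constant. Tracing the soliton equation gives $R = cn + 2\operatorname{div} X$, and integrating over $M$ (so that $\int_M \operatorname{div} X\, dV = 0$) yields $R \equiv cn$ and hence $\operatorname{div} X \equiv 0$. It is precisely here that compactness enters — without it the divergence term survives, which is exactly what permits the non-Einstein solvsolitons in the noncompact setting of Theorem \ref{thm: Ricci soliton solvmanifolds are isometric to solsolitons}.

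With $R$ constant and $\operatorname{div} X = 0$ in hand, I would run an integral Bochner argument. Since $R$ is constant, $\operatorname{div}(ric) = \tfrac12 dR = 0$, and as $\operatorname{div}(cg)=0$ the soliton equation gives $\operatorname{div}(\mathcal{L}_X g) = 0$. Pairing this with $X$ and integrating by parts — using $\operatorname{div} X = 0$ to discard the lower-order terms — produces
\[
\int_M |\nabla X|^2 \, dV \;=\; \int_M ric(X,X)\, dV \;=\; c\int_M |X|^2\, dV ,
\]
the last equality again using $\operatorname{div} X = 0$ (the term $\int_M (\mathcal{L}_X g)(X,X)\,dV$ integrates to zero). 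For $c < 0$ the right-hand side is $\le 0$ while the left is $\ge 0$, forcing $X \equiv 0$; for $c = 0$ it forces $\nabla X \equiv 0$. Either way $\mathcal{L}_X g = 0$ and $\mathcal{M}$ is Einstein.

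The remaining and genuinely harder case is the shrinking one $c>0$, where the displayed identity is sign-inconclusive. One option is to invoke the rigidity cited in the introduction (Naber; Petersen--Wylie): a homogeneous shrinking soliton is a compact Einstein manifold or a product of one with Euclidean space, and compactness removes the Euclidean factor. A more self-contained route, which I would prefer since it treats all signs of $c$ uniformly, exploits the self-similar Ricci flow $g(t) = (1-2ct)\,\psi_t^* g$: because the flow preserves the (transitive) isometry group, each $g(t)$ is homogeneous, so $R(g(t))$ is spatially constant and the evolution equation $\partial_t R = \Delta R + 2|ric|^2$ collapses to $\tfrac{d}{dt}R = 2|ric|^2$. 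On the other hand self-similarity gives $R(g(t)) = cn/(1-2ct)$, whence $\tfrac{d}{dt}R = 2c^2 n$ at $t=0$. Comparing, $|ric|^2 = c^2 n = R^2/n$, which is equality in the Cauchy--Schwarz bound $|ric|^2 \ge R^2/n$ and therefore forces $ric = \tfrac{R}{n}g$, i.e.\ Einstein.

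I expect the main obstacle to be exactly this shrinking case: the elementary integral estimate closes $c \le 0$ immediately but says nothing when $c>0$, so the substantive input is either the prior classification of shrinking homogeneous solitons or the sharper self-similar computation, which must extract $|ric|^2$ precisely rather than merely bound it. Throughout, the essential structural inputs are homogeneity (giving $\Delta R = 0$) together with compactness (giving $\operatorname{div} X = 0$); the argument degenerates the moment either hypothesis is dropped.
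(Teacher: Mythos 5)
Your argument is correct, but it takes a genuinely different route from the paper's. The paper deduces the compact case from its structure theorem (Theorem \ref{prop: RF evolving by aut of isom}): since the isometry group $G$ is compact, the automorphisms driving the flow are generated by $ad\ X$ for some $X\in[\mathfrak g,\mathfrak g]$, and then --- exactly as in the semi-simple case --- unimodularity lets one apply Lemma \ref{lemma: grad sc for unimodular} ($grad\ sc=-ric$ on the space of invariant metrics) to the constant function $t\mapsto sc(\phi_t^*g)$ and conclude that the symmetric part of $ad\ X$ vanishes. Your route bypasses the structure theorem entirely: tracing and integrating gives $R=cn$ and $\operatorname{div}X\equiv 0$, and then the scalar curvature evolution $\partial_t R=\Delta R+2|ric|^2$ along the self-similar solution, with $\Delta R=0$ by homogeneity, yields $|ric|^2=cR=R^2/n$, the equality case of Cauchy--Schwarz, forcing $ric=\frac{R}{n}g$. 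This computation is correct and handles all signs of $c$ at once, so your preliminary Bochner treatment of $c\le 0$ is redundant, and your first option for $c>0$ (invoking Naber and Petersen--Wylie) should be dropped: it would reintroduce the dependence on Perelman's result that compact solitons are gradient, which the paper explicitly sets out to avoid. What the paper's longer argument buys is uniformity with Theorem \ref{thm: Ricci soliton with transitive semi-simple implies Einstein}, where your trace-and-integrate step is unavailable because $\int_M \operatorname{div}X\,dV$ need not vanish on a noncompact homogeneous space; your approach, by contrast, is more elementary and self-contained for the compact case itself.
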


Theorem \ref{cor: compact homog are einstein} 
is well-known to  experts and can be deduced from the work of Petersen-Wylie \cite{Petersen-Wylie:OnGradientRicciSolitonsWithSymmetry} where homogeneous gradient Ricci solitons are studied.  To use that work, one must apply a result of Perelman \cite[Remark 3.2]{Perelman:TheEntropyFormulaForTheRicciFlowAndItsGeometricApplications} which states that compact Ricci solitons are gradient solitons.  Our proof of this theorem does not depend on \cite{Perelman:TheEntropyFormulaForTheRicciFlowAndItsGeometricApplications} and instead applies a new structure result on homogeneous Ricci solitons, described below.

\subsection*{Homogeneous Ricci solitons}
While we do not know if a general homogeneous Ricci soliton must be isometric to an algebraic Ricci soliton, we have the following structural result.  Let $G$ be Lie group with compact subgroup $K$.  We say a $G$-invariant Ricci soliton  on $G/K$ is \emph{semi-algebraic} if it satisfies
	$$
	Ric = cId + \frac{1}{2}[D_{\mathfrak g/ \mathfrak k}  + (D_{\mathfrak g/ \mathfrak k}) ^t]
	$$
for some derivation $D\in Der(\mathfrak g)$ satisfying $D:\mathfrak k \to \mathfrak k$.   Here we are evaluating the $(1,1)$-Ricci tensor on the tangent space $T_{eK}G/K$ which has been naturally identified with $\mathfrak g / \mathfrak k$, and $D_{\mathfrak g/ \mathfrak k} :  \mathfrak g/\mathfrak k \to \mathfrak g/\mathfrak k $ is the map induced by $D$.  We note that the definition of semi-algebraic is relative to a choice of transitive group $G$.

\begin{thm}\label{thm: homog RS is semi-alg relative to isometry group} Let $(M,g)$ be a connected homogeneous Ricci soliton.  Then $(M,g)$ is a semi-algebraic Ricci soliton relative to  
$G=Isom(M,g)$.
\end{thm}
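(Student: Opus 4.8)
The plan is to exploit the defining feature of a Ricci soliton — that its Ricci flow evolves purely by scaling and pullback by diffeomorphisms — together with the rigidity of the \emph{full} isometry group under the flow. Write $G = Isom(M,g)$, fix a base point $p\in M$ with compact isotropy group $K = Stab_G(p)$, and identify $M = G/K$ and $T_pM \cong \mathfrak g/\mathfrak k$. Since $g$ is a Ricci soliton with complete vector field $X$, there is a solution $g(t)$ of the Ricci flow with $g(0)=g$ of the form $g(t) = \sigma(t)\,\varphi_t^*g$, where $\sigma(t)>0$, $\sigma(0)=1$, and $\varphi_t$ is a smooth family of diffeomorphisms with $\varphi_0 = \mathrm{id}$.

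First I would record that the Ricci flow preserves the full isometry group. Since the Ricci tensor is natural, any isometry $f$ of $g(0)$ pulls a solution back to a solution with the same initial data, so by uniqueness of the homogeneous Ricci flow $f$ is an isometry of every $g(t)$; running the flow backwards gives the reverse inclusion, whence $Isom(M,g(t)) = G$ for all $t$. On the other hand, $\varphi_t$ is a homothety from $(M,g(t))$ to $(M,g)$ (scaling does not affect the isometry group), so it conjugates isometry groups: $Isom(M,g(t)) = \varphi_t^{-1}\,G\,\varphi_t$. Comparing the two descriptions gives $\varphi_t^{-1}\,G\,\varphi_t = G$, i.e.\ conjugation by $\varphi_t$ is an automorphism of $G$. \textbf{This normalizing property — which is exactly why the statement is made relative to the full isometry group — is the crux of the argument.}

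Second, I would arrange that $\varphi_t$ fixes the base point. Choosing $f_t\in G$ with $f_t(\varphi_t(p))=p$ and replacing $\varphi_t$ by $\psi_t = f_t\circ\varphi_t$ does not alter the evolution, since $\psi_t^*g = \varphi_t^*f_t^*g = \varphi_t^*g$; so I may assume $\varphi_t(p)=p$ for all $t$. Then $\Phi_t := \bigl(g\mapsto \varphi_t\,g\,\varphi_t^{-1}\bigr)\in Aut(G)$ preserves $K$, and a direct check shows that the diffeomorphism of $G/K$ induced by $\Phi_t$ is precisely $\varphi_t$. Differentiating at $t=0$, the map $D := \ddtat \Phi_t$ is a derivation of $\mathfrak g$ with $D(\mathfrak k)\subseteq\mathfrak k$, and its induced endomorphism $D_{\mathfrak g/\mathfrak k}$ of $T_pM$ equals $\ddtat d(\varphi_t)_p$.

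Finally I would differentiate the evolution equation at the base point. From $g(t)=\sigma(t)\varphi_t^*g$ and the Ricci flow equation,
$$
-2\,ric_g = \ddtat g(t) = \dot\sigma(0)\, g + \mathcal L_Y g, \qquad Y = \ddtat \varphi_t .
$$
Because $\varphi_t(p)=p$, we have $Y(p)=0$, so $\mathcal L_Y g$ at $p$ is computed directly from the linearization $d(\varphi_t)_p$, yielding $(\mathcal L_Y g)_p(u,v) = g(D_{\mathfrak g/\mathfrak k}u,v) + g(u,D_{\mathfrak g/\mathfrak k}v)$. Passing to $(1,1)$-tensors this reads $Ric = c\,Id + \tfrac12\bigl(D_{\mathfrak g/\mathfrak k} + (D_{\mathfrak g/\mathfrak k})^t\bigr)$ after setting $c = -\dot\sigma(0)/2$ and replacing $D$ by $-D$ (still a derivation preserving $\mathfrak k$), which is exactly the semi-algebraic condition. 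I expect the one genuinely delicate point to be the uniqueness/naturality input guaranteeing $Isom(M,g(t))=G$, so that conjugation by $\varphi_t$ lands back inside $G$ rather than in some larger isometry group; once the soliton diffeomorphisms are known to act by automorphisms, the remaining steps are formal.
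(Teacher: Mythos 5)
Your proof is correct and follows the same overall strategy as the paper's proof of Theorem~\ref{prop: RF evolving by aut of isom}, from which Theorem~\ref{thm: homog RS is semi-alg relative to isometry group} is deduced: invariance of the full isometry group along the (unique, homogeneous) Ricci flow, normalization of the soliton diffeomorphisms to fix a base point by composing with elements of $G$, the observation that conjugation by the normalized diffeomorphisms defines automorphisms in $Aut(G)^K$ whose induced maps on $G/K$ are exactly those diffeomorphisms, and then differentiation at $t=0$. The one genuine divergence is at the last step: the paper does not merely differentiate the smooth family $\Phi_t$ at $t=0$, but first upgrades $\Phi_t$ to an honest 1-parameter subgroup $exp(tD)$ of $Aut(G)^K$ (using that the solution stays in the orbit $\mathbb R_{>0}\times Aut(G)^K\cdot g$, whose tangent directions are realized by 1-parameter subgroups) and further arranges $\Phi_t|_K=id$. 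Your shortcut is legitimate for the statement at hand, since semi-algebraic only asks for Eqn.~\ref{eqn:semi-alg} and the velocity at $t=0$ of any smooth curve through the identity in $Aut(G)^K$ is automatically a derivation of $\mathfrak g$ preserving $\mathfrak k$; what the paper's extra work buys is the stronger conclusion of Theorem~\ref{prop: RF evolving by aut of isom} (a 1-parameter group acting trivially on $K$), which is what is actually used downstream in the proofs of Theorems~\ref{thm: Ricci soliton with transitive semi-simple implies Einstein} and~\ref{cor: compact homog are einstein}, e.g.\ to get $ad\,X|_{\mathfrak k}=0$ in Lemma~\ref{lemma: trace of pr circ ad X is zero}. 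Two small points to make explicit in a write-up: the correcting isometries $f_t$ should be chosen smoothly in $t$ (take a local smooth section of $G\to G/K$ along the curve $\varphi_t(p)$; only smoothness near $t=0$ is needed for your argument), and the inclusion $Isom(g(t))\subseteq Isom(g)$ requires either backward uniqueness of the flow (Kotschwar) or, as you also observe, the conjugacy $Isom(g(t))=\varphi_t^{-1}G\varphi_t$, which forces all these groups to coincide with $G$.
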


In the case that $G/K$ is simply-connected, one has a converse; namely, any metric satisfying  $Ric = cId + \frac{1}{2}[D_{\mathfrak g/ \mathfrak k}  + (D_{\mathfrak g/ \mathfrak k}) ^t]$ is in fact a Ricci soliton.    
It is still an important and open question to determine if every homogeneous Ricci soliton is, in fact, simply-connected.
Further, there are no known examples of semi-algebraic Ricci solitons which are not actually algebraic.

We finish this note with a characterization of solvable Lie groups which admit Ricci soliton metrics, see Section \ref{sec: existence questions}.  This extends the work of Lauret on solvsolitons \cite{Lauret:SolSolitons}.

\bigskip
\textit{Acknowledgements.}  It is my pleasure to thank Jorge Lauret and Dan Knopf for many helpful comments on an early version of this work.

\section{Preliminaries}
\label{sec: preliminaries}

Let $g$ be a Ricci soliton metric on a manifold $M$, that is, 
	    \begin{equation}\label{eqn: ricci soliton} ric_g = cg + \mathcal L_X g  \end{equation}
for some smooth vector field $X$ on $M$ and  $c\in \mathbb R$.  It is well-known among experts that when $(M,g)$ is complete with bounded curvature, e.g. when homogeneous, that completeness of the vector field $X$ follows from the equation above  
(see e.g.\cite[Lemma 4.3]{Kotschwar:BackwardsUniquenessRF}).  Thus, the above  condition is equivalent to $g_t = (-2ct+1)\varphi_{s(t)}^*g$ being a solution to the Ricci flow
    \begin{equation}\label{eqn: Ricci Flow}\frac{\partial}{\partial t}g = -2 ric_g\end{equation}
where $\varphi_s$ is the family of diffeomorphisms generated by the vector field $X$ which we reparameterize by $s(t)=\frac{1}{c}\ln (-2ct+1)$.

\subsection*{Algebraic and semi-algebraic Ricci solitons}
On a homogeneous space $G/K$, we extend the idea of  algebraic Ricci solitons on Lie groups by considering those metrics which evolve under the Ricci flow by `automorphisms of $G/K$'.  We make this precise below.  
In the following, $G$ denotes a Lie group with compact subgroup $K$.

Each automorphism $\Phi\in Aut(G)$ which fixes $K$ gives rise to a well-defined diffeomorphism $\phi$ of $G/K$  
defined by
    $$\phi(h\cdot p) = \Phi(h)\cdot p \quad \mbox{ for } h\in G$$
Denote by $Aut(G)^K$ the subgroup of $Aut(G)$ which stabilizes $K$.  As described above, this is naturally a subgroup of $\mathfrak{Diff}(G/K)$.

Suppose there exists a 1-parameter group $\Phi_s\in Aut(G)^K$ such that  $g_t  = (-2ct+1) \phi_{s(t)}^*g$ 
is a solution to the Ricci flow, where $\phi_s \in \mathfrak{Diff}(G/K)$ corresponds to $\Phi_s$ and   $s(t)=\frac{1}{c}\ln (-2ct+1)$.  As $\Phi_s$ is a 1-parameter group, there exists a derivation $D\in Der (\mathfrak g)$ such that $d (\Phi_s)_e = exp(sD)$,  $D:\mathfrak k \to \mathfrak k$,  and
	\begin{equation}\label{eqn:semi-alg}
	Ric = cId + \frac{1}{2}[D_{\mathfrak g/ \mathfrak k}  + (D_{\mathfrak g/ \mathfrak k}) ^t]
	\end{equation}	
where $D_{\mathfrak g/ \mathfrak k} $ is the map induced by $D$  on $\mathfrak g /\mathfrak k$.  We call a $G$-invariant Ricci soliton on $G/K$ satisfying Eqn.~\ref{eqn:semi-alg}  a \textit{$G$-semi-algebraic Ricci soliton}.  Furthermore, as in the Lie groups case, we say a $G$-invariant Ricci soliton on $G/K$ is a  \textit{$G$-algebraic Ricci soliton} if it satisfies the seemingly stronger condition
	\begin{equation}\label{eqn:algebraic RS}
	Ric = cId +  D_{\mathfrak g/ \mathfrak k}
	\end{equation}		
for some $D\in Der(\mathfrak g)$ with $D:\mathfrak k \to\mathfrak k$.  When the group $G$ is understood, we abuse notation and simply say algebraic or semi-algebraic Ricci soliton.

The definitions of algebraic and semi-algebraic Ricci soliton depend on the transitive group $G$ chosen.   In Section \ref{sec: examples of non algebraic} we give examples of  homogeneous Ricci solitons which are algebraic with respect to one transitive group, but not another.  

\begin{remark} Presently, there are no examples $G$-semi-algebraic Ricci solitons which are not $G$-algebraic.   In the special case that $G$ is solvable and $K$ is trivial, semi-algebraic is equivalent to algebraic    
(Lemma \ref{lemma: evolving by aut implies algebraic}).  
\end{remark}

It is easy to see that if $G/K$ is simply-connected, then any metric satisfying Eqn.~\ref{eqn:semi-alg} is indeed a Ricci soliton.  In the sequel, we show that every  homogeneous Ricci soliton in fact satisfies Eqn.~\ref{eqn:semi-alg}.  

\begin{remark} It is still an open and important question to determine if all expanding, homogeneous Ricci solitons are simply-connected.  For more on this question, we direct the interested reader to the forthcoming work \cite{Jablo:StronglySolvable}.
\end{remark}

\subsection*{Homogeneous solutions to the Ricci flow}
Although uniqueness of solutions to the Ricci flow is not known in general (on non-compact manifolds), much more can be said for homogeneous metrics using \cite{Chen-Zhu:UniquenessRicciFlowCompleteNoncompact} and \cite{Kotschwar:BackwardsUniquenessRF}.

\begin{lemma}\label{lemma:Isom along RF}
Consider a homogeneous Riemannian manifold $(M,g)$. 
\begin{enumerate}
    \item There exists a homogeneous solution to the Ricci flow starting at the given homogeneous metric $g$.
    \item Solutions to the Ricci flow, among homogeneous metrics, are unique and, denoting the isometry group of the solution $g_t$ by $Isom(g_t)$, we have
            $$Isom(g_t)=Isom(g)$$
        for all $t$ such that the solution $g_t$ exists.
\end{enumerate}
\end{lemma}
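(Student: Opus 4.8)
The plan is to treat the two parts together, extracting everything from the diffeomorphism-equivariance of the Ricci flow combined with the strong uniqueness theorems available for complete metrics of bounded curvature. The basic observation is that a homogeneous metric has bounded geometry: it is complete, and since its isometry group acts transitively, its curvature tensor (and all of its covariant derivatives) are bounded. Hence Shi's short-time existence theorem produces \emph{some} complete solution $g_t$, $t\in[0,T)$, with curvature bounded on compact time subintervals, and the uniqueness results of Chen--Zhu \cite{Chen-Zhu:UniquenessRicciFlowCompleteNoncompact} apply within this class.

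For part (1), I would show that the solution $g_t$ furnished by Shi is automatically homogeneous. The key is that the Ricci tensor is natural under diffeomorphisms, so for any $\phi\in G:=Isom(M,g)$ the pullback $\phi^*g_t$ is again a solution of the Ricci flow, with the same initial condition $\phi^*g_0=\phi^*g=g=g_0$. Since both $g_t$ and $\phi^*g_t$ are complete with bounded curvature, Chen--Zhu uniqueness forces $\phi^*g_t=g_t$ for all $t$; that is, every $\phi\in G$ remains an isometry of $g_t$. As $G$ already acts transitively on $M$, each $g_t$ is homogeneous. This simultaneously yields the inclusion $Isom(g)\subseteq Isom(g_t)$.

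For part (2), uniqueness among homogeneous solutions is immediate: any two such solutions lie in the complete-bounded-curvature class, so Chen--Zhu applies verbatim. It remains to prove the reverse inclusion $Isom(g_t)\subseteq Isom(g)$. Here I would fix a time $t_0$ and a $\psi\in Isom(g_{t_0})$, and again consider the two Ricci-flow solutions $g_s$ and $\psi^*g_s$. By the choice of $\psi$ they agree at $s=t_0$, namely $\psi^*g_{t_0}=g_{t_0}$. Running the flow \emph{backward} from $t_0$ and invoking the backward-uniqueness theorem of Kotschwar \cite{Kotschwar:BackwardsUniquenessRF} gives $\psi^*g_s=g_s$ for $s\leq t_0$; in particular $\psi^*g=\psi^*g_0=g_0=g$, so $\psi\in Isom(g)$. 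Combined with the forward inclusion, this proves $Isom(g_t)=Isom(g)$ for every $t$.

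The main obstacle is precisely this reverse inclusion, which genuinely requires backward uniqueness rather than the more routine forward uniqueness; without it one could only conclude that isometries of the initial metric persist, not that no new isometries are created along the flow. A secondary technical point, to be handled with care, is verifying that the curvature bounds are \emph{uniform} on each compact time interval $[0,T']$, so that the hypotheses of both Chen--Zhu and Kotschwar are legitimately met; for homogeneous flows this should follow from the smooth dependence of the homogeneous metric $g_t$ on $t$ together with compactness of $[0,T']$.
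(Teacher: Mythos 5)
Your proof is correct, and for part (ii) it is exactly the paper's argument: the paper simply declares (ii) to be a special case of Chen--Zhu \cite{Chen-Zhu:UniquenessRicciFlowCompleteNoncompact} (forward uniqueness, giving $Isom(g)\subseteq Isom(g_t)$ and uniqueness among homogeneous solutions) and Kotschwar \cite{Kotschwar:BackwardsUniquenessRF} (backward uniqueness, giving the reverse inclusion), noting only that homogeneous metrics are complete with bounded curvature so the hypotheses are met; you have usefully spelled out the pullback argument that the paper leaves implicit. The one place you diverge is part (i): the paper treats existence as well known and obtains the homogeneous solution by reducing the flow to an ODE on the finite-dimensional space of $G$-invariant inner products on a single tangent space (citing \cite{Lauret:RicciFlowOfHomogeneousManifoldsAndItsSolitons}), whereas you start from Shi's short-time existence on complete noncompact manifolds and then upgrade the Shi solution to a homogeneous one via forward uniqueness. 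Both routes work; the ODE reduction is more elementary and keeps the solution visibly inside the finite-dimensional family of $G$-invariant metrics, while your argument leans on the heavier analytic existence theorem but dovetails neatly with the uniqueness machinery you already need for (ii). The paper itself remarks that the ODE approach alone would only give uniqueness among $G$-invariant metrics, not among all homogeneous metrics, which is precisely why Chen--Zhu is needed for (ii) --- a point your write-up respects.
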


\begin{proof}
The first statement is well-known and we do not give a proof.  Proving (i) amounts to analyzing an ODE on the space of inner products on a single tangent space of $M$.   We refer the interested reader to \cite{Lauret:RicciFlowOfHomogeneousManifoldsAndItsSolitons} for more details.

The second statement is a special case of \cite{Chen-Zhu:UniquenessRicciFlowCompleteNoncompact} and \cite{Kotschwar:BackwardsUniquenessRF}.  These works study Ricci flow in forwards and backwards time, respectively, and the metrics of interest are those which are complete and have bounded curvature.  Clearly homogeneous metrics satisfy these conditions.

\end{proof}

We observe that if one were just considering $G$-invariant metrics, uniqueness of the flow could be determined by analyzing the associated ODE on the space of inner products on a single tangent space.  However, to get uniqueness among all homogeneous metrics, that approach is not sufficient.

In the special case of Ricci solitons, one can obtain the lemma above using only \cite{Chen-Zhu:UniquenessRicciFlowCompleteNoncompact}.  
To use  that work alone, one simply recognizes that the size of the isometry group does not change along a soliton solution to Eqn.~\ref{eqn: Ricci Flow} (which is true since these isometry groups are all conjugate).

\subsection*{$G$-invariant metrics.}
Let $(M,g)$ be a homogeneous Riemannian manifold with a transitive group of isometries $G$.  Fixing a point $p\in M$, denote the isotropy at $p$ by $K=G_p$.  We  assume that $G$ is closed in $Isom(M,g)$ so that we may naturally identify $M$ with $G/K$.  Every $G$-invariant metric on $M$ arises in the following way.

Let $\mathfrak g$ denote the Lie algebra of $G$ and fix an $Ad(K)$-invariant decomposition $\mathfrak p \oplus \mathfrak k$, where $\mathfrak k$ is the Lie algebra of $K$.  The subspace $\mathfrak p$ is naturally identified with 
$T_pM$.

The restriction of an $Ad(K)$-invariant inner product $\ip{ \ ,\ }$ on $\mathfrak g$ to $\mathfrak p$ gives rise to a $G$-invariant metric $g$ on $M\simeq G/K$ defined by
    $$g(v ,v)_{p} = \ip{X_v,X_v}_e \quad \mbox{for }v\in T_{p}M$$
where $X_v\in \mathfrak p$  is the unique vector in $\mathfrak p$ such that $v=\ddtat exp(tX_v)\cdot p$.

If one considers $\Phi\in Aut(G)^K$ with corresponding $\phi\in \mathfrak{Diff}(M)$, then $\Phi(K)=K$ implies $\Phi^*\ip{ \ ,\ } $ on $\mathfrak g$ is $Ad~K$-invariant and the restriction to the $Ad(K)$-invariant subspace $\Phi(\mathfrak p)$ corresponds precisely to $\phi^*g \mbox{ on } M $.

\subsection*{Transitive unimodular group of isometries}
Let $(M,g)$ be a homogeneous Riemannian manifold with transitive group of isometries $G$.  Fix a point $p\in M$ and let $K=G_p$ denote the isotropy subgroup at $p$.  Fix an $Ad(K)$-invariant compliment $\mathfrak p$ of $\mathfrak k$ in $\mathfrak g$.

As discussed above, the set  $\mathcal M ^G$ of $G$-invariant metrics  on $M$ is naturally identified with the set of $Ad(K)$-invariant inner products on $\mathfrak p$ and thus  
can be identified with
    $$GL(\mathfrak p)^K/O(\mathfrak p)^K$$
where $GL(\mathfrak p)^K$ denotes the matrices of $GL(\mathfrak p)$ which commute with $Ad(k)$ for all $k\in K$, and $O(\mathfrak p)^K = O(\mathfrak p) \cap GL(\mathfrak p)^K$.

As is well-known,  $\mathcal M ^G$ can be endowed with a natural Riemannian metric so that $GL(\mathfrak p)^K$ acts isometrically.  To define this metric on $\mathcal M^G$, consider a $G$-invariant metric  $Q$ on $M$.  The tangent space at $Q$ is
    $$T_Q\mathcal M^G = \{ Ad(K)-\mbox{invariant symmetric, bilinear forms on } \mathfrak p\}$$
and the natural Riemannian metric on $\mathcal M^G$ is defined by
    $$\ip{v,w}_Q = tr\ vw = \sum_i v(e_i,e_i) w(e_i,e_i)$$
where $v,w\in T_Q\mathcal M^G$ and $\{e_i\}$ is a $Q$-orthonormal basis of $T_pM$.  

\begin{lemma}\label{lemma: grad sc for unimodular} Let $(M,g)$ be a homogeneous Riemannian manifold with transitive unimodular group of isometries $G$.  Given $Q\in \mathcal M^G$, denote by $ric_Q$ and $sc(Q)$ the Ricci and scalar curvatures of $(G/K,Q)$, respectively.  The gradient of the function $sc:\mathcal M^G\to \mathbb R$ is
    $$(grad\ sc)_Q = - ric_Q$$
relative to the above Riemannian metric on $\mathcal M^G$.
\end{lemma}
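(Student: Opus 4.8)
The plan is to compute the differential of $sc$ at a point $Q\in\mathcal M^G$ via the classical first variation formula for scalar curvature, using homogeneity to eliminate the derivative terms and unimodularity to kill the remaining divergence term. Since $Q$ is $G$-invariant, its scalar curvature is a $G$-invariant, hence constant, function on $M$, so it may be evaluated at the base point $p$. A tangent vector $v\in T_Q\mathcal M^G$ is an $Ad(K)$-invariant symmetric bilinear form on $\mathfrak p$, equivalently a $G$-invariant symmetric $2$-tensor on $M$, and $d(sc)_Q(v)=\frac{d}{dt}\big|_{t=0}sc(Q+tv)$.

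First I would invoke the first variation formula
\[
\frac{d}{dt}\Big|_{t=0}sc(Q+tv)\;=\;-\Delta_Q(\operatorname{tr}_Q v)\;+\;\operatorname{div}_Q\operatorname{div}_Q v\;-\;\langle ric_Q,v\rangle_Q,
\]
where $\langle ric_Q,v\rangle_Q=\operatorname{tr}(ric_Q\,v)$ is the pointwise inner product of the two tensors and $\operatorname{div}_Q\operatorname{div}_Q v$ denotes the double divergence. Because both $v$ and $Q$ are $G$-invariant, the function $\operatorname{tr}_Q v$ is $G$-invariant and therefore constant, so its Laplacian vanishes and only the double-divergence term remains to be dealt with.

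The heart of the matter is to show that $\operatorname{div}_Q\operatorname{div}_Q v=0$, and this is the single place where unimodularity of $G$ is used. Now $\operatorname{div}_Q v$ is a $G$-invariant $1$-form, so it corresponds to a $G$-invariant vector field, and $\operatorname{div}_Q\operatorname{div}_Q v$ is the divergence of that field, a constant function. It therefore suffices to verify that any $G$-invariant vector field on $G/K$ has vanishing divergence when $G$ is unimodular. I would establish this by a Koszul-type computation of the Levi--Civita connection at $p$: in the Lie group case one finds for a left-invariant field $X$ that $\operatorname{div}(X)=\sum_i\langle[e_i,X],e_i\rangle=-\operatorname{tr}(\operatorname{ad}_X)$, which vanishes exactly under unimodularity, and the analogous computation on the reductive quotient $G/K$ again expresses the divergence as a trace of adjoint operators that dies once $\operatorname{tr}(\operatorname{ad})=0$. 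This divergence computation is the main obstacle, both because it is the only use of the hypothesis and because it must be carried out on $G/K$ rather than on $G$ itself.

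With both derivative terms gone, the first variation reduces to $d(sc)_Q(v)=-\langle ric_Q,v\rangle_Q$ for every $v\in T_Q\mathcal M^G$. Since $Q$ is $G$-invariant its Ricci tensor is $Ad(K)$-invariant, whence $ric_Q\in T_Q\mathcal M^G$, and the pointwise inner product $\langle\,\cdot\,,\,\cdot\,\rangle_Q$ of symmetric $2$-tensors is precisely the metric $\operatorname{tr}(vw)$ placed on $\mathcal M^G$. Therefore $-ric_Q$ represents the differential of $sc$ with respect to this metric, and $(\operatorname{grad}\,sc)_Q=-ric_Q$, as claimed.
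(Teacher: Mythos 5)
Your argument is correct, but note that the paper does not actually prove this lemma at all --- it defers entirely to \cite[Section 3]{Heber} and to Nikonorov, where the formula $(grad\ sc)_Q=-ric_Q$ is obtained by writing the scalar curvature of a $G$-invariant metric as an explicit algebraic expression in the structure constants (Lie brackets projected to $\mathfrak p$, plus a mean-curvature term $-|H|^2$ that vanishes precisely when $G$ is unimodular) and differentiating that expression directly. Your route through the first variation formula $sc'_Q(v)=-\Delta_Q(\operatorname{tr}_Q v)+\operatorname{div}_Q\operatorname{div}_Q v-\langle ric_Q,v\rangle_Q$ is a genuinely different, more geometric proof of the same fact, and it correctly isolates where unimodularity enters: the Laplacian term dies because a $G$-invariant function is constant, and everything reduces to showing that a $G$-invariant vector field on $G/K$ is divergence-free. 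For that step your Koszul sketch can be replaced by a cleaner computation that avoids the (real) pitfall of confusing $G$-invariant fields with Killing fields: a $G$-invariant field corresponds to an $Ad(K)$-fixed $X\in\mathfrak p$, its flow is induced by right translation $g\mapsto g\exp(tX)$, and in the $G$-equivariant trivialization its differential at the base point is $pr_{\mathfrak p}\circ e^{-t\,ad\,X}|_{\mathfrak p}$, so $\operatorname{div}\tilde X=-\operatorname{tr}(pr_{\mathfrak p}\circ ad\,X|_{\mathfrak p})$; since $Ad(K)$-invariance forces $ad\,X|_{\mathfrak k}=0$, this trace equals $-\operatorname{tr}(ad\,X)$, which vanishes exactly by unimodularity. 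What your approach buys is a conceptual proof independent of the curvature formulas for homogeneous spaces; what the references' approach buys is an explicit formula for $sc$ that is reused elsewhere in that literature. Either way, your proposal is a valid proof and, if anything, more self-contained than the paper's citation.
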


For a proof of this result, see \cite[Section 3]{Heber} or \cite{Nikonorov:TheScalarCurvFuctionalAndHomogEinsteinMetricsOnLieGroups}.

\section{General Homogeneous Setting}
\label{sec: general homog setting}

\begin{thm}\label{prop: RF evolving by aut of isom}
Let $(M,g)$ be a homogeneous Ricci soliton with isometry group $G$.  Fix a point $p\in M$ and let $K=G_p$ denote the isotropy subgroup of $G$.  Then  
there exist $c(t)\in\mathbb R$ and  a 1-parameter family of automorphisms $\Phi_t\in Aut(G)^K$, with associated family of diffeomorphisms $\phi_t \subset \mathfrak{Diff}(M)$, such that the Ricci flow on $M$ starting at the Ricci soliton $g$ is  given by
            $$ g_t = c(t) \phi_t^* g$$
Furthermore, $\Phi_t$ may be chosen so that 
\begin{enumerate}
    \item 
    $\Phi_t|_{K} = id$, 
    \item  
    $\Phi_t$   
    is a 1-parameter subgroup of $Aut(G)$ (after reparameterizing $t$), hence there exists $D\in Der(g)$ such that $d\Phi_t = exp(tD) \in Aut(\mathfrak g)$.
\end{enumerate}
\end{thm}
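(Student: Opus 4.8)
The plan is to read off from the soliton a one-parameter family of diffeomorphisms evolving $g$, to promote these to automorphisms of the full isometry group $G=Isom(M,g)$ using the rigidity of $G$ along the flow, and finally to normalize the generating vector field so that the induced automorphisms fix $K$. Concretely, by the discussion in Section~\ref{sec: preliminaries}, with $c(t)=-2ct+1$ and $\varphi_s$ the flow of the soliton vector field $X$, the homogeneous Ricci flow solution (unique by Lemma~\ref{lemma:Isom along RF}) is $g_t=c(t)\,\varphi_{s(t)}^{*}g$, where $s(t)=\tfrac1c\ln(-2ct+1)$. Rescaling does not alter isometry groups, so $Isom(g_t)=Isom(\varphi_{s(t)}^{*}g)=\varphi_{s(t)}^{-1}\,G\,\varphi_{s(t)}$, while Lemma~\ref{lemma:Isom along RF} forces $Isom(g_t)=G$. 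Thus every $\varphi_s$ normalizes $G$ inside $\mathfrak{Diff}(M)$, and differentiating this relation at $s=0$ shows that $\mathcal L_X$ preserves the Killing algebra $\mathfrak g$: the map $Z\mapsto[X,Z]$ is a derivation $\tilde D\in Der(\mathfrak g)$, and $h\mapsto\varphi_s h\varphi_s^{-1}$ is a one-parameter subgroup of $Aut(G)$ with differential $\exp(s\tilde D)$ at the identity.

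Next I would choose a better representative for $X$. Since the Killing fields of $g$ are precisely $\mathfrak g$, for any $W\in\mathfrak g$ the field $X+W$ satisfies the same soliton equation and hence, by uniqueness of the flow, generates the same solution $g_t$. Averaging $X$ over the compact isotropy $K$ (which acts isometrically and fixes $p$) yields a $K$-invariant soliton field; subtracting an appropriate $K$-invariant Killing field, which exists because the evaluation $\mathfrak g\to T_pM$ is $K$-equivariant and surjective, I may arrange at once that $X$ is $K$-invariant and that $X(p)=0$.

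With such an $X$ the conclusion is immediate. From $X(p)=0$ the flow $\varphi_s$ fixes $p$, so the conjugation automorphism $\Phi_t(h)=\varphi_{s(t)}\,h\,\varphi_{s(t)}^{-1}$ stabilizes $K=G_p$, lies in $Aut(G)^K$, and has the curve $\varphi_{s(t)}$ itself as its associated diffeomorphism $\phi_t$; hence $g_t=c(t)\,\phi_t^{*}g$. The $K$-invariance of $X$ means $\varphi_s$ commutes with $K$, giving $\Phi_t|_K=\mathrm{id}$, which is (i). Finally $\{\varphi_s\}$ is a genuine one-parameter group, so after reparameterizing time by $s(t)$ the family $\Phi_t$ becomes a one-parameter subgroup of $Aut(G)$ with $d\Phi_t=\exp(tD)$ for $D=\tilde D\in Der(\mathfrak g)$ satisfying $D|_{\mathfrak k}=0$; this is (ii).

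The main obstacle is the second step: passing from \emph{evolution by diffeomorphisms}, which is automatic for any soliton, to \emph{evolution by automorphisms of $G$}. A priori $X$ is an unstructured complete vector field with no reason to normalize $\mathfrak g$; the fact that it does rests entirely on the preservation of the \emph{full} isometry group along the flow (Lemma~\ref{lemma:Isom along RF}, and through it the uniqueness theorems of Chen--Zhu and Kotschwar). By comparison the averaging in the third step is routine, the only point needing care being the simultaneous achievement of $K$-invariance and $X(p)=0$.
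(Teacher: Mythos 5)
Your argument is correct, and it reaches the conclusion by a genuinely different route from the paper. Both proofs begin the same way: uniqueness of the homogeneous Ricci flow (Lemma \ref{lemma:Isom along RF}) forces $Isom(g_t)=G$, so the soliton diffeomorphisms normalize $G$ and conjugation produces automorphisms. The divergence is in how the two normalizations --- fixing $p$ and acting trivially on $K$ --- are achieved. The paper first composes $\varphi_s$ with left-translations $L_{h(s)^{-1}}$ to fix $p$; this destroys the one-parameter group property, which is then recovered by observing that the flow line lies in the homogeneous orbit $\mathbb R_{>0}\times Aut(G)^K\cdot g$ and replacing the generator accordingly; finally $\Phi_s|_K=id$ is arranged via the structure theory of compact groups ($K_0=K_{ss}Z(K_0)$, discreteness of $Aut$ of a torus and of a finite group, innerness of $Aut_0(K_{ss})$, and composition with the inner family $C_{exp(sX)}$). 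You instead perform all corrections on the soliton vector field itself: averaging $X$ over the compact isotropy $K$ and subtracting a $K$-invariant Killing field with the prescribed value at $p$ keeps the soliton equation intact (Killing fields contribute nothing to $\mathcal L_{\cdot}g$, and the $K$-fixed part of $\mathfrak g$ surjects onto the $K$-fixed part of $T_pM$ by a second averaging), so the flow of the corrected field is still a genuine one-parameter group, fixes $p$, and commutes with $K$; parts (i) and (ii) then drop out simultaneously, with no need for the homogeneous-orbit argument or the compact-group structure theory, and with the disconnectedness of $K$ handled automatically by Haar averaging. What your version buys is economy and transparency; what it quietly uses is the freedom to add arbitrary Killing fields of the \emph{full} isometry group to $X$, which is available here because the theorem is stated for $G=Isom(M,g)$, but would not be available if one wanted the analogous statement relative to a smaller transitive subgroup. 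Two small points worth making explicit: completeness of the modified field $X'$ is not automatic from completeness of $X$ (a sum of complete fields need not be complete) but follows because $X'$ again satisfies the soliton equation on a complete bounded-curvature manifold, via the same citation the paper uses in Section \ref{sec: preliminaries}; and smoothness of $s\mapsto C_{\varphi'_s}\in Aut(G)$ should be noted, though the paper glosses over the identical point.
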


\begin{proof}[Proof of Thm.~\ref{prop: RF evolving by aut of isom}]
Let $g$ be a homogeneous Ricci soliton satisfying $ric=cg+\mathcal L_Xg$ with corresponding solution of the Ricci flow (Eqn.~\ref{eqn: Ricci Flow}) given by $g_t = (-2ct+1)\varphi_{s(t)}^*g$, where $s(t)=\frac{1}{c}\ln (-2ct+1)$ and $\varphi_s$ is the 1-parameter group of diffeomorphisms generated by $X$.

Fix a point $p\in M$.  We may assume $\varphi_t$ fixes $p$.  To see this, consider the smooth curve $\varphi_t(p)\subset M$.  By taking a section of the right $K$-action on $G$, there exists a smooth curve $h(t)\in G$ such that $L_{h(t)}\cdot p =\varphi_t(p)\subset M\simeq G/K$.

As left-translation by elements of $G$ is an isometry of $(M,g_t)$ for all $t$ (Lemma \ref{lemma:Isom along RF}), we have a solution to the Ricci flow given by
    $$g_t = (-2ct+1) \psi_{s(t)}^*g$$
where $\psi_s = L_{h(s)^{-1}} \circ \varphi_s$.  (It is not clear that this new family is a 1-parameter group of diffeomorphisms of $M$, even after reparameterizing time, as $L_{h(s)^{-1}}$ and $\varphi_s$ may not commute.)

Given $\sigma\in Isom(g)=G$, we have  $\psi_{s(t)} \circ \sigma \circ \psi_{s(t)}^{-1} \in Isom(g_t)=G$ due to Lemma \ref{lemma:Isom along RF}.  Thus, we have a smooth family of automorphisms $\Phi_s\in Aut(G)$ defined by
    $$\Phi_s(\sigma) = \psi_s \circ \sigma \circ \psi_s^{-1} \quad for \ \sigma\in G$$
Let $K=G_p$.  Since $\psi_s$ fixes $p$, we see that $\Phi_s(K)=K$, i.e. $\Phi_s\in Aut(G)^K$.  Let $\phi_s \in \mathfrak{Diff}(M)$ be the diffeomorphism of $M$ associated to $\Phi_s$.  Observe that $\phi_s = \psi_s$.  This proves the first claim, we prove (ii) next.

Thus far, we have shown our solution to the Ricci flow satisfies
    $$g_t \subset \mathbb R_{>0}\times Aut(G)^K \cdot g$$
The set of metrics $\mathbb R_{>0}\times Aut(G)^K \cdot g$ is a homogeneous manifold and so its tangent space is generated by 1-parameter subgroups of $\mathbb R_{>0}\times Aut(G)^K$.  More precisely, there exists a derivation $D\in Der(\mathfrak g)$ such that $D(\mathfrak k) \subset \mathfrak k$ (where $\mathfrak k=Lie \ K$) and
    $$ric = cg + \mathcal L_Y g $$
where $Y=\ddtat \phi_t$ generates the diffeomorphism $\phi_t \in \mathfrak{Diff}(M)$ defined by $exp(tD)\in Aut(G)^K$.  It is clear that $Y$ is a complete vector field as 1-parameter subgroups of a Lie group are defined for all time.  We obtain the desired solution to the Ricci flow by reparameterizing $\Phi_s = exp(sD)$ by $s(t)=\frac{1}{c}\ln (-2ct+1)$.  
This proves (ii).

We finish the proof of Thm.~\ref{prop: RF evolving by aut of isom} by showing $\Phi_s$ may be chosen so that $\Phi_s|_{K}=id$.  Observe that $\Phi_s|_K \in Aut(K)$, since $\Phi_s\in Aut(G)^K$.  Note that the subgroup $K=G_p$ is compact since $G$ is the isometry group of $(M,g)$.

Denote the connected component of the identity of $K$ by $K_0$.  Since $K_0$ is compact and connected, it may be written as a product
    $$K_0=K_{ss}Z(K_0)$$
of its maximal semi-simple subgroup $K_{ss}=[K_0,K_0]$ and its center $Z(K_0)$.  (The intersection $K_{ss}\cap Z(K_0)$ is finite.)

The center of a group is always preserved by automorphisms and so $\Phi_s|_{Z(K_0)} \in Aut(Z(K_0))$.  Being the automorphism group of a compact torus, $Aut(Z(k_0))$ is finite and since $\Phi_0=id$, we see that $\Phi_s|_{Z(K_0)}=id$.

Since $\Phi_s(K_{ss})$ is semi-simple and $K_{ss}$ is a maximal semi-simple subgroup of $K_0$, we see that $\Phi_s|_{K_{ss}}\in Aut(K_{ss})$.  The group $K_{ss}$ being semi-simple implies the connected component of the identity of $Aut(K_{ss})$ consists of inner automorphisms and so $\Phi_s|_{K_{ss}} = C_{exp(s X)}$ for some $X\in Lie\ K_{ss}$, where $C_h$ denotes conjugation by $h\in K_{ss}$.

As conjugation is the identity on the center, we see that
    $$\Phi_s|_{K_0} = C_{exp(sX)}$$
for some $X\in Lie \ K_{ss}$.

Consider the composition $\Phi_s \circ C_{exp(sX)^{-1}}$.  By construction, this is the identity on $K_0$.  Since $\Phi_s$ and $C_{exp(s X)}$ commute, this composition is a 1-parameter subgroup of $Aut(G)^K$.  Furthermore, this gives rise to a continuous family of automorphisms of the finite group $K/K_0$.  As the automorphism group of a finite group is finite, $\Phi_s \circ C_{exp(sX)^{-1}}$ is the identity on $K$ as it is so for $s=0$.

Lastly, $C_{exp(sX)}\in Aut(G)^K$ corresponds to the diffeomorphism of $M$ which is left-translation by $exp(sX)\in K$.  As this is an isometry, the diffeomorphisms of $M$ corresponding to the family $\Phi_{s(t)} \circ C_{exp(s(t)X)^{-1}}$  give rise to a solution to the Ricci flow satisfying (i).  This completes the proof of the theorem.

\end{proof}

Theorem \ref{thm: homog RS is semi-alg relative to isometry group} now follows immediately from the theorem above; see also the discussion surrounding Eqn.~\ref{eqn:semi-alg}.

\section{Semi-simple and compact homogeneous spaces}
\label{sec: semi-simple}

This section is devoted to proving Theorems \ref{thm: Ricci soliton with transitive semi-simple implies Einstein} and  \ref{cor: compact homog are einstein} which state that a homogeneous Ricci soliton is necessarily Einstein if there exists a transitive semi-simple group of isometries or it is compact.

Before presenting the proofs of these results, we note the following corollary to Theorem \ref{thm: Ricci soliton with transitive semi-simple implies Einstein}.

\begin{cor} On the group $SL_2\mathbb R$, left-invariant Ricci soliton metrics do not exist. 
\end{cor}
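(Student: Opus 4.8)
The plan is to combine the reduction-to-Einstein theorem with the rigidity of Einstein metrics in dimension three. First I would observe that $SL_2\mathbb{R}$ is semisimple and acts on itself transitively by left translations, which are isometries of any left-invariant metric. Hence a left-invariant Ricci soliton on $SL_2\mathbb{R}$ is a homogeneous Ricci soliton admitting a transitive semi-simple group of isometries, so Theorem \ref{thm: Ricci soliton with transitive semi-simple implies Einstein} forces it to be Einstein. The corollary therefore reduces to the purely Riemannian claim that $SL_2\mathbb{R}$ carries no left-invariant Einstein metric.

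To establish this, I would exploit that $\dim SL_2\mathbb{R} = 3$: in dimension three the Weyl tensor vanishes, so an Einstein metric has constant sectional curvature. A left-invariant Einstein metric would thus present $SL_2\mathbb{R}$, via left translations, as a Lie group acting simply transitively by isometries on a three-dimensional space form, and I would rule out each of the three constant-curvature geometries in turn.

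The positive and flat cases are immediate. Constant positive curvature forces the universal cover to be $S^3$, hence compact, contradicting the non-compactness of $SL_2\mathbb{R}$ (equivalently, positive Ricci curvature contradicts Myers' theorem). The flat case is excluded by Milnor's structure theorem: a connected Lie group admitting a flat left-invariant metric is solvable, whereas $SL_2\mathbb{R}$ is semisimple. The remaining negative-curvature case is the main obstacle: I would need the fact that a connected Lie group acting simply transitively by isometries on real hyperbolic space $H^3$ must be solvable --- this is the content of Heintze's description of homogeneous negatively curved spaces as solvmanifolds (alternatively it follows from the Iwasawa decomposition $Isom(H^3)=K\cdot AN$, in which $AN$ is solvable and acts simply transitively). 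Since $SL_2\mathbb{R}$ is not solvable, this case cannot occur either, and the proof is complete.
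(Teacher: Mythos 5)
Your first step is exactly the paper's: left translations exhibit $SL_2\mathbb R$ as a transitive semi-simple group of isometries of any left-invariant metric, so Theorem \ref{thm: Ricci soliton with transitive semi-simple implies Einstein} reduces the corollary to the non-existence of left-invariant Einstein metrics on $SL_2\mathbb R$. Where you genuinely diverge is in the second step: the paper treats that non-existence as a known, completely resolved fact and simply cites it (it follows, for instance, from Milnor's computation that the Ricci form of every left-invariant metric on $\mathfrak{sl}_2\mathbb R$ has signature $(+,-,-)$), whereas you derive it from the three-dimensional rigidity ``Einstein implies constant sectional curvature'' and a case analysis of space forms. Your route is correct in outline and has the virtue of being self-contained and geometric; the positive and flat cases are handled cleanly by Myers' theorem and by Milnor's flat-implies-(almost abelian, in particular solvable) theorem, respectively. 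Two small repairs are needed in the negative case. First, since $SL_2\mathbb R$ is not simply connected, a constant negative curvature left-invariant metric would make it a quotient $H^3/\mathbb Z$, and it is the universal cover $\widetilde{SL_2\mathbb R}$ that would act simply transitively on $H^3$; this is harmless, as that group is still semi-simple. Second, and more substantively, neither Heintze's theorem nor the Iwasawa decomposition, as you invoke them, asserts that \emph{every} connected Lie group acting simply transitively on $H^3$ is solvable --- they only produce \emph{one} solvable simply transitive subgroup. The universal statement you need is true but requires an extra line: a simply transitive connected group embeds as a proper $3$-dimensional connected subgroup of $Isom(H^3)_0\cong PSL_2\mathbb C$, and the only non-solvable $3$-dimensional subalgebras of $\mathfrak{sl}_2\mathbb C$ are copies of $\mathfrak{su}(2)$ or $\mathfrak{sl}_2\mathbb R$, which fix a point or preserve a totally geodesic $H^2$ and hence cannot act transitively on $H^3$. (Alternatively, $\widetilde{SL_2\mathbb R}$ admits no faithful finite-dimensional representation and so cannot embed in the linear group $PSL_2\mathbb C$ at all.) With either patch your argument closes; or one can shortcut the entire case analysis by citing Milnor's signature computation, which is in effect what the paper does.
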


Although special cases of this corollary were known to experts, to our knowledge, this result is new. We remind the reader that the group $SL_2\mathbb R$ is the only non-compact semi-simple group for which the existence of Einstein metrics has been completely resolved.  In this case, there are no left-invariant Einstein metrics and the corollary follows immediately from Theorem \ref{thm: Ricci soliton with transitive semi-simple implies Einstein}.

\subsection*{Proof of Thm.~\ref{thm: Ricci soliton with transitive semi-simple implies Einstein}}
As in the case of solvmanifolds, the proof of Theorem \ref{thm: Ricci soliton with transitive semi-simple implies Einstein} uses the fact that there is a very special transitive group contained in the  isometry group $G$.  We may assume our homogeneous space $M$ is connected.  The following is Theorem 4.1 of \cite{Gordon:RiemannianIsometryGroupsContainingTransitiveReductiveSubgroups}.

\begin{thm}[Gordon]\label{thm: gordon-transitive reductive} Suppose a connected Lie group $H$ with compact radical acts transitively and effectively by isometries on a Riemannian manifold $M$.  Then the connected component of the isometry group is reductive.
\end{thm}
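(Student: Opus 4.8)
The plan is to prove the statement in three stages: first upgrade the hypothesis on $H$, then reduce the conclusion to a statement about the radical of $G := Isom(M)^0$, and finally attack that statement using the orbit structure of the radical together with properness of the isometry action.

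First I would observe that the hypothesis already forces $H$ itself to be reductive. The radical $Rad(H)$ is by assumption a compact connected solvable group, hence a torus $T$. Conjugation gives a homomorphism $H \to Aut(T)$, and since $Aut(T)$ is discrete (isomorphic to some $GL_k(\mathbb Z)$) while $H$ is connected, this homomorphism is trivial; thus $T$ is central in $H$ and $H$ is reductive. Because $M$ is connected and $H$ acts transitively, every $g\in G$ satisfies $g\cdot p = h\cdot p$ for some $h\in H$, so $g\in hK$ with $K=G_p$ compact; that is, $G=HK$. It therefore suffices to understand the radical $R := Rad(G)$, since $G$ is reductive precisely when $R$ is central. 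Moreover, the same torus argument shows that a compact connected normal subgroup of the connected group $G$ is automatically central, so it is enough to prove that $R$ is \emph{compact}.

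The core of the proof is thus to show $R$ is compact, equivalently that the nilradical $N$ of $G$ is trivial. I would break this into two parts. (i) $M$ has no Euclidean de Rham factor: a reductive group cannot act transitively by isometries on $\mathbb R^k$ for $k>0$, because its semisimple part maps into the compact group $O(k)$ and its central torus has bounded orbits, so the translations needed for transitivity never appear; projecting the transitive reductive action of $H$ onto a flat factor would produce exactly such an impossible action. This excludes a non-compact central $\mathbb R$-direction in $R$. (ii) One must control the nilradical $N \trianglelefteq G$. Since $N$ is normal in $G$ and $H\le G$, the group $(N\cap H)^0$ is a connected nilpotent normal subgroup of the reductive group $H$; but a reductive group has trivial nilradical, so $(N\cap H)^0=\{e\}$ and $N$ meets $H$ only discretely. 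The $N$-orbits in $M$ are closed embedded submanifolds — here I use that $Isom(M)$ acts properly — and, as $H$ normalizes $N$ and acts transitively, these orbits form a single $H$-congruence class of homogeneous submanifolds. Comparing the tangent directions of the $N$-orbit through $p$, which lie in the image of $\mathfrak n$ in $\mathfrak g/\mathfrak k$, with the $\mathfrak h$-directions (recall $\mathfrak h+\mathfrak k=\mathfrak g$, so $\mathfrak h$ already surjects onto $T_pM$) should, together with the faithfulness of the isotropy representation of $K$, force these orbits to be compact and hence $N$ to be compact.

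The main obstacle is precisely part (ii). The difficulty is that $N$ can be strictly larger than its intersection with $H$: since $N$ meets $H$ only discretely yet $N\subseteq G=HK$, any non-compact nilpotent direction must hide in the extra isotropy directions that $K$ contributes but that are invisible to $H$. Purely algebraic bookkeeping does not exclude this, because a subalgebra that is nilpotent and normal in $\mathfrak g$ need not meet $\mathfrak h$ in anything non-central. Ruling it out genuinely requires the Riemannian input — the properness of the action and the rigidity of how a transitive reductive group can coexist with a non-compact nilpotent foliation — and this metric-geometric analysis is the substance of Gordon's Theorem 4.1. Once $R$ is known to be compact it is a central torus by the automorphism argument above, so $Rad(G)$ is central and $G$ is reductive, completing the proof.
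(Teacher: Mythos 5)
First, a point of comparison: the paper does not prove this statement at all; it is quoted verbatim as Theorem~4.1 of Gordon's paper \cite{Gordon:RiemannianIsometryGroupsContainingTransitiveReductiveSubgroups} and used as a black box. So there is no internal proof to measure your attempt against, and your attempt must stand on its own.

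It does not. Your preliminary reductions are fine: a compact connected solvable group is a torus, a connected group acts trivially on a torus by automorphisms, hence $Rad(H)$ is central and $H$ is reductive, and $G=HK$ with $K$ compact. But the entire content of the theorem is concentrated in your step~(ii) -- controlling the nilradical $N$ of $G=Isom(M)^0$ -- and there you explicitly concede that the argument ``requires the Riemannian input'' and that ``this metric-geometric analysis is the substance of Gordon's Theorem~4.1.'' That is, at the decisive moment you appeal to the theorem you are supposed to be proving. The sketch you give for (ii) (comparing $N$-orbit directions with $\mathfrak h$-directions and hoping the isotropy representation forces compactness) is not an argument; as you yourself note, a noncompact nilpotent normal subgroup could in principle sit inside $HK$ meeting $H$ only discretely, and nothing you write excludes this. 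Gordon's actual proof requires a genuine geometric analysis (closedness of nilradical orbits, the induced fibration of $M$, and the interaction with the transitive reductive subgroup), none of which is reproduced here. So the proposal is a correct reduction followed by an unfilled gap at the core of the theorem.

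Two smaller inaccuracies you should also repair: (a) a compact connected \emph{normal} subgroup of a connected Lie group need not be central (a normal $SO(3)$ factor in $SO(3)\times SO(3)$ is not); your argument is saved only because $Rad(G)$ is solvable, so compact and connected forces it to be a torus, to which the automorphism argument applies. (b) ``$Rad(G)$ compact is equivalent to $N$ trivial'' is false -- for $G=U(n)$ the radical is a nontrivial central circle and equals the nilradical; the correct target is that $N$ (equivalently $Rad(G)$) is \emph{central}, not trivial. Likewise, in step~(i), ``a reductive group cannot act transitively on $\mathbb R^k$'' is false as stated ($\mathbb R^k$ is reductive); you need the compactness of the radical there, not mere reductivity.
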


This theorem clearly applies when the transitive group $H$ is semi-simple as the radical of a semi-simple group is trivial.

Let $G$ denote the isometry group of $(M,g)$ and $H$ a transitive semi-simple subgroup of $G$.  Let $G_0$ denote the connected component of $G$.

Recall, the maximal semi-simple subgroups of a connected Lie group are all conjugate by elements of the nilradical.  Since the nilradical of $G_0$ is contained in the center (cf. Thm.~\ref{thm: gordon-transitive reductive}), we see that $G_0$ contains a unique maximal semi-simple subgroup which is normal in $G_0$.  As $H$ is contained in this maximal, semi-simple subgroup, this maximal subgroup acts transitively on $M$ and we may replace $H$ with said maximal, semi-simple subgroup of $G_0$.  Furthermore, this maximal, semi-simple subgroup is closed and the  results from the previous section may be applied.

Let $\Phi_t$ be the automorphisms of $G$ by which the Ricci flow on $M$ is evolving (cf. Thm.~\ref{prop: RF evolving by aut of isom}).  Let $\phi_t$ denote the diffeomorphisms of $M$ (by which Ricci flow is evolving) corresponding to $\Phi_t$ which fix some point $p\in M$.  Denote by $K$ the stabilizer subgroup of $H$ at $p$.

Since $\Phi_t(H)$ is a semi-simple subgroup of $G$, we have $\Phi_t(H) = H$, by maximality.  Thus, $\Phi_t|_H \in Aut(H)$ and by Eqn.~\ref{eqn:semi-alg}  there exists $X\in \mathfrak h$ such that
    \begin{equation}\label{eqn: ric for semi-simple} Ric = cId + \frac{1}{2}[ (ad\ X)_{\mathfrak h/\mathfrak k} + (ad\ X)^t _{\mathfrak h/\mathfrak k}]\end{equation}
Here we have used the fact that the derivations of a semi-simple Lie algebra are all inner. To finish the proof of the theorem, we present a lemma whose proof we postpone until the end of the current proof.

\begin{lemma}\label{lemma: trace of pr circ ad X is zero} Let $X\in\mathfrak h$ be as above.  Then $tr ( ad\ X)_{\mathfrak h/\mathfrak k} =0 $.
\end{lemma}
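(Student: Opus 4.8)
The goal is to show the induced map $(ad\ X)_{\mathfrak h/\mathfrak k}$ is trace-free, and I would obtain this from the unimodularity of $\mathfrak h$ together with the compactness of $K$. First I would record that $ad\ X$ genuinely preserves $\mathfrak k$: the derivation $D\in Der(\mathfrak g)$ furnished by Theorem~\ref{prop: RF evolving by aut of isom} preserves both $\mathfrak h$ (since $\Phi_t(H)=H$ by maximality) and the full isotropy subalgebra, and $\mathfrak k$ is the intersection of $\mathfrak h$ with the latter; as $D|_{\mathfrak h}=ad\ X$, we get $ad\ X(\mathfrak k)\subseteq \mathfrak k$. Consequently, in a basis of $\mathfrak h$ adapted to the subspace $\mathfrak k$, the matrix of $ad\ X$ is block upper-triangular and
\[
tr(ad\ X)_{\mathfrak h} = tr\big(ad\ X|_{\mathfrak k}\big) + tr(ad\ X)_{\mathfrak h/\mathfrak k}.
\]
Since $\mathfrak h$ is semisimple it is unimodular, so $tr(ad\ X)_{\mathfrak h}=0$ (for instance $\mathfrak h=[\mathfrak h,\mathfrak h]$ and the trace vanishes on brackets). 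Thus it suffices to prove $tr(ad\ X|_{\mathfrak k})=0$.

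To handle the restriction to $\mathfrak k$ I would exploit that $K$ is compact. Because $ad\ X$ preserves $\mathfrak k$, the element $exp(tX)$ normalizes the connected compact group $K_0$, so $t\mapsto Ad(exp\ tX)|_{K_0}$ is a one-parameter subgroup of $Aut(K_0)$. The identity component of the automorphism group of a compact connected Lie group consists of inner automorphisms, hence $Ad(exp\ tX)|_{\mathfrak k}=Ad(k(t))|_{\mathfrak k}$ for some curve $k(t)\in K_0$. As $K_0$ is compact, $Ad(k(t))$ is orthogonal for an $Ad(K_0)$-invariant inner product on $\mathfrak k$, so $\det\big(Ad(exp\ tX)|_{\mathfrak k}\big)=\pm1$; comparing with $\det\big(exp(t\,ad\ X|_{\mathfrak k})\big)=e^{t\,tr(ad\ X|_{\mathfrak k})}$, which is positive, forces this determinant to equal $1$ for all $t$, whence $tr(ad\ X|_{\mathfrak k})=0$. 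Combined with the displayed identity, this gives $tr(ad\ X)_{\mathfrak h/\mathfrak k}=0$.

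The only non-formal input is this last step, and it is where I expect the main difficulty to lie. The trace on the semisimple part $[\mathfrak k,\mathfrak k]$ vanishes automatically, since derivations of a semisimple Lie algebra are traceless; the genuine content is controlling $ad\ X$ on the center $\mathfrak z(\mathfrak k)$, where a priori an automorphism could scale. If one prefers not to invoke the structure of $Aut(K_0)^0$, I would argue directly on $\mathfrak z(\mathfrak k)$: it is the Lie algebra of a torus $Z(K_0)$, the map $Ad(exp\ tX)$ preserves this torus (a derivation preserves the center of an invariant subalgebra) and hence preserves its integral lattice, so $Ad(exp\ tX)|_{\mathfrak z(\mathfrak k)}$ takes values in the discrete group of automorphisms of that lattice; by continuity it is the identity for all $t$, giving $ad\ X|_{\mathfrak z(\mathfrak k)}=0$. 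Either route reduces the lemma to the compactness of the isotropy, which is guaranteed here because $G=Isom(M,g)$.
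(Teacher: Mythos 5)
Your argument is correct, and its skeleton coincides with the paper's: decompose $tr\,ad\,X$ over the $ad\,X$-invariant subspace $\mathfrak k$ as $tr\,ad\,X = tr(ad\,X|_{\mathfrak k}) + tr(ad\,X)_{\mathfrak h/\mathfrak k}$, kill the left-hand side by semisimplicity (unimodularity) of $\mathfrak h$, and then dispose of the $\mathfrak k$-term. The one place you diverge is that last step: the paper simply invokes part (i) of Theorem \ref{prop: RF evolving by aut of isom}, which says $\Phi_t$ may be normalized so that $\Phi_t|_K = id$, giving the stronger conclusion $ad\,X|_{\mathfrak k}=0$ outright; you instead prove only the weaker statement $tr(ad\,X|_{\mathfrak k})=0$ directly from compactness of $K$ (orthogonality of $Ad(K_0)$ for a bi-invariant metric forces $\det = 1$, or alternatively tracelessness on $[\mathfrak k,\mathfrak k]$ plus the lattice argument on $\mathfrak z(\mathfrak k)$). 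Both routes are valid; yours is self-contained and does not require the normalization of the soliton automorphisms, while the paper's is a one-line citation of machinery it has already built --- indeed your torus/inner-automorphism argument is essentially the same device the paper uses inside the proof of Theorem \ref{prop: RF evolving by aut of isom} (i), so you have in effect re-derived the trace-level consequence of that normalization in situ.
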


As scalar curvature is a Riemannian invariant,  $sc(g) = sc(\phi_t^*g)$.  Using Lemma \ref{lemma: grad sc for unimodular} together with Eqn.~\ref{eqn: ric for semi-simple} and Lemma \ref{lemma: trace of pr circ ad X is zero}, we have
    \begin{eqnarray*}0 = \ddtat sc(\phi_t^*g) &=& \ip{grad\ sc,  (ad\ X)_{\mathfrak h/\mathfrak k}}_g \\
        &=& c tr( ad\ X)_{\mathfrak h/\mathfrak k} + tr\ \frac{1}{2}[ ( ad\ X)_{\mathfrak h/\mathfrak k} + ( ad\ X)_{\mathfrak h/\mathfrak k}^t ] ( ad\ X)_{\mathfrak h/\mathfrak k} \\
        &=& tr\frac{1}{4}[  (ad\ X)_{\mathfrak h/\mathfrak k} + ( ad\ X)^t_{\mathfrak h/\mathfrak k} ]^2
    \end{eqnarray*}
This implies $\frac{1}{2}[ (ad\ X)_{\mathfrak h/\mathfrak k} + (ad\ X)^t_{\mathfrak h/\mathfrak k} ]=0$ and hence the metric is Einstein.

\medskip

We finish by proving the last lemma.

\begin{proof}[Proof of Lemma \ref{lemma: trace of pr circ ad X is zero}] 
Let $K$ denote the isotropy of the $H$-action on $M$ at $p$.  Let $\mathfrak p$ denote an $Ad(K)$-invariant compliment of $\mathfrak k$ in $\mathfrak h$, as above.  Recall, $K$ is invariant under the 1-parameter family of automorphisms $\Phi_t = exp(t \ ad\ X)$ of $H$ and we have
    $$tr\ ad\ X = tr (pr \circ ad\ X) + tr\ ad\ X|_{\mathfrak k}$$
where $pr:\mathfrak h \to \mathfrak p$ is the projection onto $\mathfrak p$, as before.

However, using Thm.~\ref{prop: RF evolving by aut of isom} (i), we may assume that $ad\ X|_\mathfrak k = 0$.  Since $\mathfrak h$ is semi-simple, we have $0=tr\ ad\ X = tr (pr \circ ad\ X) = tr (ad\ X)_{\mathfrak h/\mathfrak k}$, which proves the lemma.
\end{proof}

\begin{remark*}  In the special case of a left-invariant metric on a semi-simple group satisfying the stronger condition $Ric = cId +D$, this result was proven by J. Lauret, see Thm.~5.1 of \cite{LauretNilsoliton}.  The proof given there inspired the general case proven here.
\end{remark*}

Next we prove Theorem \ref{cor: compact homog are einstein}.  While this can be deduced from Theorem \ref{thm: Ricci soliton with transitive semi-simple implies Einstein} using topological properties of compact Ricci solitons, there is a direct algebraic proof which is shorter.  This is the proof we give.

\begin{proof}[Proof of Theorem \ref{cor: compact homog are einstein}]
Consider a compact homogeneous Ricci soliton $(M,g)$.  As $M$ is compact, the isometry group $G$ of $(M,g)$ is also compact.

Let $\Phi_t\in Aut(G)$ be the family of automorphisms of $G$ which induces the soliton metric (see Thm.~\ref{prop: RF evolving by aut of isom}).  Since $G$ is compact, we see that there exists $X\in [\mathfrak g,\mathfrak g]$ such that $\Phi_t|_{G_0} = exp(t\ ad \ X)$ (see the proof of Thm.~\ref{prop: RF evolving by aut of isom} (i)), where $G_0$ is the identity component of $G$.

As compact groups are unimodular, we may use Lemma \ref{lemma: grad sc for unimodular} and finish the proof of the theorem as in the proof of Theorem \ref{thm: Ricci soliton with transitive semi-simple implies Einstein}.
\end{proof}

\begin{remark} While we expect that any homogeneous Ricci soliton admitting a reductive group of isometries must be Einstein, this is still an open question.
\end{remark}

\section{Solvmanifolds}
\label{sec: main result}

We prove Theorem \ref{thm: Ricci soliton solvmanifolds are isometric to solsolitons} first assuming $\mathcal M = \widetilde{\mathcal M}$ is simply-connected.

\subsection*{A canonical presentation}
To study solvmanifolds, we first produce a preferred transitive solvable group of isometries which is in so-called  \textit{standard position}.

\begin{defin} Let $G$ be a Lie group and $S$ a subgroup of $G$.  We say that $S$ is in standard position in $G$ if, among Lie subgroups of $G$ containing $S$,  the normalizer $N_G(S)$ is maximal with respect to the property that it contains no non-trivial noncompact semi-simple subgroups.  When $G$ is understood, we will just say $S$ is in standard position.
\end{defin}

Let $\mathcal M = (M,g)$ be a solvmanifold.  Up to conjugacy within the isometry group, there is a unique solvable Lie group $S < Isom(\mathcal M)_0$ which acts almost simply transitively and is in standard position in $Isom(\mathcal M)_0$, see Section 1 and Definition 1.10 of \cite{GordonWilson:IsomGrpsOfRiemSolv}.

The definition of $S$ being in standard position in $G$ is purely Lie theoretic, depending only on the embedding of $S$ in $G$.  Consequently, by Lemma \ref{lemma:Isom along RF}, we have the following.

\begin{lemma} Let $\mathcal M$ be a solvmanifold and $\mathcal M_t$ a homogeneous solution to the Ricci flow.  If $S$ is in standard position for $\mathcal M$, then it is in standard position for $\mathcal M_t$.
\end{lemma}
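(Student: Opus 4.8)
The plan is to show that the property of being in standard position is preserved under the Ricci flow by leveraging two facts already established in the excerpt: first, that the notion of standard position depends only on the abstract Lie-theoretic embedding $S < G$ (it is defined purely in terms of the normalizer $N_G(S)$ and its containing no nontrivial noncompact semisimple subgroups), and second, that by Lemma \ref{lemma:Isom along RF} the isometry group is constant along the flow, $Isom(\mathcal M_t) = Isom(\mathcal M)$. The combination of these two should make the lemma nearly immediate once the right objects are identified.

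First I would fix a solvable group $S$ in standard position for $\mathcal M = (M,g)$, so $S < Isom(\mathcal M)_0$ acts almost simply transitively. Writing $G = Isom(\mathcal M)$ and $G_0$ for its identity component, the hypothesis says that among Lie subgroups of $G_0$ containing $S$, the normalizer $N_{G_0}(S)$ is maximal with respect to containing no nontrivial noncompact semisimple subgroup. Next I would invoke Lemma \ref{lemma:Isom along RF}(ii): the solution $\mathcal M_t$ has the \emph{same} isometry group, $Isom(\mathcal M_t) = G$, with the same identity component $G_0$. In particular, $S$ is still a subgroup of $Isom(\mathcal M_t)_0 = G_0$, and $S$ still acts almost simply transitively on $M$ (its action on the underlying manifold is unchanged, since only the metric is evolving and $S$ consists of isometries for every $g_t$).

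The key observation is then that the condition defining standard position, namely the maximality property of $N_{G_0}(S)$, is a statement entirely about the group-theoretic pair $(S, G_0)$ and makes no reference to the metric. Since both $S$ and $G_0$ are literally the same subgroups of the same ambient group for $\mathcal M$ and for $\mathcal M_t$, the normalizer $N_{G_0}(S)$ is identical in the two situations, and the maximality property it satisfies for $\mathcal M$ is verbatim the maximality property required for $\mathcal M_t$. Hence $S$ is in standard position for $\mathcal M_t$ as well.

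The only point demanding genuine care, and the one I would flag as the main obstacle, is verifying that the hypotheses of the standard-position framework transfer cleanly; concretely, that $S$ remains a valid candidate for $\mathcal M_t$, i.e.\ that it still lies in the identity component and acts almost simply transitively under the new metric. This is exactly where Lemma \ref{lemma:Isom along RF} does the work, since it guarantees the isometry group and its identity component are unchanged; the action of $S$ on the set $M$ is metric-independent, so almost simple transitivity persists. Once this is in place, the purely Lie-theoretic nature of the definition finishes the argument with no further computation.
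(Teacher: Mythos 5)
Your proposal is correct and is essentially the paper's own argument: the paper dispatches this lemma in one line by observing that standard position is a purely Lie-theoretic condition on the embedding of $S$ in the isometry group, which is unchanged along the flow by Lemma \ref{lemma:Isom along RF}. Your additional check that $S$ still acts almost simply transitively on $\mathcal M_t$ is a reasonable point to make explicit, but it is the same reasoning the paper leaves implicit.
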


As we are assuming $\mathcal M$ is simply connected, $S$ acts simply transitively on $\mathcal M$ and so $\mathcal M \simeq \{S,g\}$ where $g$ is a left-invariant metric on $S$.  The solvmanifold $\mathcal M_t$ can be written as $\{S,g_t\}$, where $g_t$ is a left-invariant metric on $S$.  Now let  $\mathcal M$ be a Ricci soliton with metric $g_0$ and consider the homogeneous solution $g_t$ to the Ricci flow given above:  $g_t = (-2ct+1) \varphi_{s(t)} ^* g_0$ where $s(t)=\frac{1}{c}\ln (-2ct+1)$ and $\varphi_s \in \mathfrak{Diff}(M)$ is generated by $X$.  We consider the dilation $h_t=\frac{1}{-2ct+1} \ g_t = \varphi_{s(t)}^* g_0$.

As dilation does not change the isometry group, $S$ is in standard position for both $g_0$ and $h_t$, and we may apply Theorem 5.2 of \cite{GordonWilson:IsomGrpsOfRiemSolv} to see that there exists a 1-parameter family $\Psi_t \in Aut(S)$, each of which is an isometry $\Psi_t : \{S,g_0\} \to \{S,h_t\}$; that is, the Ricci flow is evolving by automorphisms relative to $S$.

\begin{remark*} A priori, one might be concerned about the smoothness of the family $\Psi_t$. However, one can see that $h_t$ is tangent (first at $t=0$ and, hence, for all $t$) to $Aut(S)^* g_0$ and so $\Psi_t$ may be chosen to be smooth.

Moreover, as in the general homogeneous setting (Theorem \ref{prop: RF evolving by aut of isom}), after reparameterizing time we have $d(\Psi_t)_e = exp(tD) \in Aut(\mathfrak s)$ for some $D\in Der(\mathfrak s)$.
\end{remark*}

Here and through out, we are making the usual identification between the automorphism group $Aut(S)$ of a simply-connected Lie group $S$ and the automorphism group $Aut(\mathfrak s)$ of its Lie algebra $\mathfrak s$: $\Phi\in Aut(S) \leftrightarrow d\Phi_e\in Aut(\mathfrak s)$.

\medskip

Thus far, we have shown that a simply-connected Ricci soliton solvmanifold $\mathcal M$ is isometric to $\{S,g\}$ (where $S$ is in standard position) and the Ricci flow evolves by automorphisms, i.e.
    \begin{equation}\label{eqn: evolving by Aut}Ric_g = cId + \frac{1}{2}(D+D^t)\end{equation}
where $Ric_g$ is the $(1,1)$-Ricci tensor of the left-invariant metric $g$ on $S$.  Here we have evaluated the tensor on the tangent space $T_eS \simeq \mathfrak s$.

\subsection*{Semi-algebraic implies algebraic}
The second part of the proof of Theorem \ref{thm: Ricci soliton solvmanifolds are isometric to solsolitons} is to show that if the Ricci flow evolves by automorphisms, then the soliton is actually algebraic.

\begin{lemma}\label{lemma: evolving by aut implies algebraic} Let $\{S,g\}$ be a solvable Lie group with left-invariant metric.  If $Ric_g$ satisfies Eqn.~\ref{eqn: evolving by Aut}, then
    $$\frac{1}{2}(D+D^t)\in Der(\mathfrak s)$$
and hence $\{S,g\}$ is an algebraic Ricci soliton.
\end{lemma}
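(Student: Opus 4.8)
The plan is to work entirely at the level of the Lie algebra $\mathfrak s$ with its fixed inner product, phrasing the derivation condition through the standard $GL(\mathfrak s)$-action on the space of brackets. Write $\mu = [\cdot,\cdot]\in\Lambda^2\mathfrak s^*\otimes\mathfrak s$ and let $\pi$ be the induced representation of $\mathfrak{gl}(\mathfrak s)$, so that $(\pi(F)\mu)(X,Y)=F\mu(X,Y)-\mu(FX,Y)-\mu(X,FY)$ and $F\in Der(\mathfrak s)\iff\pi(F)\mu=0$. The structural feature I would lean on is that, with respect to the inner product induced on tensors, $\pi$ is self-adjoint in the sense that $\pi(F)^*=\pi(F^t)$; in particular $\langle\pi(F)\mu,\mu\rangle$ vanishes for skew $F$ and, on symmetric $F$, defines (up to the factor $|\mu|^2$) the symmetric moment-map value $m(\mu)$.

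Write $D=S+A$ with $S=\tfrac12(D+D^t)$ symmetric and $A=\tfrac12(D-D^t)$ skew. The hypothesis $\pi(D)\mu=0$ gives $\pi(A)\mu=-\pi(S)\mu$, and the goal $S\in Der(\mathfrak s)$ becomes $\pi(S)\mu=0$. The first step is to establish the identity
$$2\,|\pi(S)\mu|^2=\langle\mu,\pi([A,S])\mu\rangle.$$
This follows from the representation calculus alone: substitute $\pi(S)\mu=-\pi(A)\mu$ once, move $\pi(A)$ across using $\pi(A)^*=-\pi(A)$, then apply $\pi(A)\pi(S)=\pi(S)\pi(A)+\pi([A,S])$ together with $\pi(A)\mu=-\pi(S)\mu$ and $\pi(S)^*=\pi(S)$. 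Since a direct check gives $[A,S]^t=[A,S]$, the operator $[A,S]$ is symmetric and the right-hand side equals $|\mu|^2\langle m(\mu),[A,S]\rangle$.

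The geometry enters through Lauret's curvature formula identifying the moment map with the Ricci operator. When $\mathfrak s$ is unimodular, $m(\mu)$ is a positive multiple of $Ric=cId+S$, and two elementary trace identities finish the argument: $\operatorname{tr}[A,S]=0$ and $\langle S,[A,S]\rangle=\operatorname{tr}(S[A,S])=0$ by cyclicity, so that $\langle Ric,[A,S]\rangle=0$ and hence $|\pi(S)\mu|^2=0$. This forces $\pi(S)\mu=0$, i.e. $\tfrac12(D+D^t)\in Der(\mathfrak s)$, and the soliton is algebraic. I emphasize that the vanishing $\langle Ric,[A,S]\rangle=0$ uses only $Ric=cId+S$ and the trace identities, not unimodularity.

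The main obstacle is the non-unimodular case. There $\mathfrak s$ need not be unimodular and Lauret's formula reads $Ric=\kappa\,m(\mu)$ plus a correction built from the mean-curvature vector $H$ (defined by $\langle H,X\rangle=\operatorname{tr} ad_X$), essentially $\tfrac12(ad_H+(ad_H)^t)$. Since $\langle Ric,[A,S]\rangle$ already vanishes, the whole problem reduces to showing that this correction pairs to zero with the symmetric operator $[A,S]$, that is $\langle ad_H+(ad_H)^t,[A,S]\rangle=0$. The relevant structural input is that any derivation satisfies $D^tH=0$: from $ad_{DX}=[D,ad_X]$ one gets $\operatorname{tr} ad_{DX}=0$, hence $\langle H,DX\rangle=0$ for all $X$ and so $SH=AH$. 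Combining this with the fact that derivations preserve the nilradical and with the normal form of a metric solvable Lie algebra in standard position, one expects to control $ad_H$ against $A$ and $S$ and kill the correction term — most cleanly by reducing to the nilradical, where $H$ drops out and the unimodular computation above applies verbatim. Pinning down this last pairing is the delicate point, and it is where I would concentrate the effort.
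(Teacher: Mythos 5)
Your representation-theoretic identity $2\,|\pi(S)\mu|^2=\langle\pi([A,S])\mu,\mu\rangle$ is correct, as are the trace identities $tr[A,S]=0$ and $tr(S[A,S])=0$, so the problem does reduce to pairing $[A,S]=\tfrac12[D,D^t]$ against whatever separates the moment map from $Ric$. But the proof is not complete, and the incompleteness is not confined to the non-unimodular case you flag at the end. Even in the unimodular case your key identification is off: Lauret's formula is $Ric=M_\mu-\tfrac12 \hat B-S(ad\,H)$, where $\hat B$ is the operator of the Killing form and $M_\mu$ is the moment-map term. Unimodularity ($H=0$) kills only the last summand; the Killing form term vanishes only when $\mathfrak s$ is nilpotent, not merely unimodular. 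So you are still left with showing $tr(\hat B[A,S])=0$ (or at least that it has the right sign), and using the derivation-invariance $\hat BD+D^t\hat B=0$ one computes $tr(\hat B[A,S])=-2\,tr(\hat BS^2)$, which has no reason to vanish in general. Your statement that $m(\mu)$ is a positive multiple of $Ric$ for unimodular $\mathfrak s$ is therefore false as written, and the "finished" half of your argument already has a hole.

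The second and larger gap is the one you acknowledge: the non-unimodular case, which is the generic case for an expanding solvsoliton, is left entirely open. The pairing $\langle S(ad\,H),[A,S]\rangle=tr(ad\,H\,[A,S])$ does not obviously vanish, and "reducing to the nilradical" does not make the unimodular computation apply verbatim: the Ricci operator of $\mathfrak s$ restricted to $\mathfrak n$ is not the Ricci operator of $\mathfrak n$ with the induced metric, and the difficulty lives exactly in the interaction between $ad$ of the complement $\mathfrak a=\mathfrak n^\perp$ and the bracket of $\mathfrak n$. Controlling that interaction is the content of Theorem 4.8 of \cite{Lauret:SolSolitons} (positivity of the pre-Einstein derivation, symmetry and commutation properties of $ad\,A$ for $A\in\mathfrak a$, the eigenvalue structure of $ad\,H$), which is a substantial structural theorem and not a consequence of the trace identities you invoke. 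The paper's proof of this lemma is precisely a rereading of Lauret's Proposition 4.6 (for $c\ge0$, concluding the metric is flat) and Theorem 4.8 (for $c<0$) with the hypothesis $Ric=cId+D$ replaced by $Ric=cId+S(D)$ and the term $F$ there taken to be $S(ad\,H+D)$ rather than $S(ad\,H)+D$; the heavy lifting is done by that machinery, and your proposal does not supply a substitute for it.
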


The proof of this lemma amounts to a careful reading of \cite{Lauret:SolSolitons}; while we do not provide full details, we make note of what modifications need to be done to that work.

We divide the proof into two cases. Given $D\in Der(\mathfrak s)$, denote by  $S(D)=\frac{1}{2}(D+D^t)$  the symmetric part of $D$, relative to $g$.

\medskip

\textbf{Case $c\geq 0$.}  The proof of this case follows the proof of \cite[Prop.~4.6]{Lauret:SolSolitons}. The analysis there with $Ric = cId+D$ works with $Ric=cId+S(D)$ upon noting that $D(\mathfrak s)\subset \mathfrak n$, the nilradical of $\mathfrak s$, for any derivation $D\in Der(\mathfrak s)$.  The conclusion of \cite[Prop.~4.6]{Lauret:SolSolitons} is that the manifold is Ricci flat, hence one may take $c=0$ and $D=0$.

\begin{remark*}
By a result of Alekseevsky-Kimel'fel'd \cite{AlekseevskiiKimelfeld:StructureOfHomogRiemSpacesWithZeroRicciCurv}, $\{S,g\}$ must be flat and hence $S$ is abelian (being in standard position).
\end{remark*}

\textbf{Case $c<0$.} The proof of this case follows the proof of \cite[Thm.~4.8]{Lauret:SolSolitons}.  The work there remains valid with the hypothesis $Ric = cId +D$ replaced with $Ric = cId+S(D)$.  The only modification that the proof needs is the term $F$ should now be $S(ad~H + D)$ instead of $S(ad~H)+D$.  Consequently, \cite[Thm~4.8]{Lauret:SolSolitons} now shows that $S(D) \in Der(\mathfrak s)$.

\medskip

So far we have shown that any simply-connected  Ricci soliton solvmanifold is isometric to an algebraic Ricci soliton relative to some, possibly different, solvable Lie structure.  The fact that $S$ may be chosen to be completely solvable is the content of \cite[Cor.~4.10]{Lauret:SolSolitons}.  We note that completely solvable groups are in standard position, see \cite[Thm.~4.3]{GordonWilson:IsomGrpsOfRiemSolv}.  This proves Theorem \ref{thm: Ricci soliton solvmanifolds are isometric to solsolitons} in the case that $\mathcal M$ is simply-connected.

\section{Simple Connectivity}
\label{sec: simple connectivity}

In this section, we show that any Ricci soliton solvmanifold is simply-connected, unless it is flat.  It is still an open question whether every homogeneous Ricci soliton must be simply-connected.  For more on this question, we direct the interested reader to the forthcoming work \cite{Jablo:StronglySolvable}.

Let $\mathcal M$ be a non-flat solvmanifold with simply-connected cover $\widetilde{\mathcal M}$.  Assume that $\mathcal M$ is a Ricci soliton.  As the projection $\pi : \widetilde{\mathcal M} \to \mathcal M$ is a local isometry, $\widetilde{\mathcal M}$ is also a Ricci soliton.  The Ricci soliton on $\widetilde{\mathcal M}$  corresponds to the family of diffeomorphisms $\widetilde{\varphi_t}$  which satisfy the following commutative diagram
    $$\begin{CD}
    \widetilde{\mathcal M} @>{\widetilde \varphi_t}>> \widetilde{\mathcal M}\\
    @VV \pi V  @VV \pi V\\
    \mathcal M @> \varphi_t >> \mathcal M
    \end{CD}  $$
where $\widetilde \varphi_t$ is the lift of $\varphi_t$.  Moreover, $\widetilde{\mathcal M}$ is 
homogeneous. 

\begin{prop} The degree of the cover is 1, hence $\mathcal M$ is simply-connected.
\end{prop}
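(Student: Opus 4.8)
The plan is to exploit the simply-connected case, already established in the previous section: the cover $\widetilde{\mathcal M}$ is isometric to a solvsoliton $\{S,g\}$ with $S$ completely solvable and in standard position, it is diffeomorphic to $\mathbb{R}^n$, and since $\mathcal M$ is non-flat it is an expanding soliton ($c<0$). By the Gordon--Wilson structure theory every isometry of $\{S,g\}$ is affine, so $\mathrm{Isom}(\widetilde{\mathcal M}) \subseteq \mathrm{Aut}(S)\ltimes S$ with compact isotropy. Since $\mathcal M$ is a solvmanifold it carries a transitive connected solvable group of isometries $R$; I would lift the $R$-action to $\widetilde{\mathcal M}$, obtaining a connected solvable group $\widetilde R$ acting transitively on $\widetilde{\mathcal M}$ and containing the deck group $\Gamma = \pi_1(\mathcal M)$ as a discrete normal subgroup with $\widetilde R/\Gamma \cong R$.

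The key observation is that a discrete normal subgroup of a connected group is central: conjugation gives a continuous map from the connected $\widetilde R$ into the discrete $\mathrm{Aut}(\Gamma)$, hence is trivial, so $\Gamma \subseteq Z(\widetilde R)$. Centrality together with transitivity of $\widetilde R$ forces the displacement function $x \mapsto d(x,\gamma x)$ of each $\gamma \in \Gamma$ to be $\widetilde R$-invariant, hence constant; that is, every nontrivial element of $\Gamma$ is a Clifford translation of $\widetilde{\mathcal M}$. Freeness of the deck action also gives $\Gamma \cap \widetilde K = \{e\}$ for the isotropy $\widetilde K$, so these translations are genuinely nontrivial.

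To conclude, I would pass to the de Rham decomposition $\widetilde{\mathcal M} = \mathbb{R}^k \times M'$, where $M'$ has no Euclidean factor. Isometries respect this splitting, and by Wolf's classification a homogeneous factor without Euclidean part admits no nontrivial Clifford translation; thus each $\gamma \in \Gamma$ acts as a translation on $\mathbb{R}^k$ and trivially on $M'$. A nontrivial $\Gamma$ would then be a nontrivial discrete group of translations, producing a compact flat torus factor $T^j$ with $j \geq 1$ in $\mathcal M$. This is incompatible with the soliton structure: on a flat factor the Ricci flow is static, whereas the soliton equation with $c<0$ forces the metric to rescale there by $(-2ct+1)$ along the flow, so no compact flat factor can occur. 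Hence $\Gamma$ is trivial and the cover has degree one. I expect the main obstacle to be the careful routing through the de Rham decomposition and the Clifford-translation classification needed to confine $\Gamma$ to the flat factor, together with verifying that the lifted group $\widetilde R$ is connected with $\Gamma$ central; the final contradiction with the expanding structure is then comparatively clean.
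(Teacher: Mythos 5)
Your overall strategy---lift the transitive solvable group, observe that the deck group $\Gamma$ consists of Clifford translations, and then confine $\Gamma$ to a Euclidean de~Rham factor---breaks down at the step where you invoke ``Wolf's classification'' to claim that a homogeneous factor with no Euclidean part admits no nontrivial Clifford translation. No such general theorem is available, and it is false precisely for the class of spaces at hand: if $N$ is the Heisenberg group with its nilsoliton metric, every element of the center $Z(N)$ acts by left translation with constant displacement $d(x,zx)=d(e,x^{-1}zx)=d(e,z)$, so $Z(N)\cong\mathbb R$ is a nontrivial group of Clifford translations of a de~Rham--irreducible, non-flat solvsoliton. Taking $\Gamma=\mathbb Z\subset Z(N)$, the quotient $N/\Gamma$ is a non-simply-connected homogeneous solvmanifold whose universal cover is a Ricci soliton and whose deck group consists of Clifford translations; every intermediate step of your argument applies to it verbatim, yet its fundamental group does not sit in a Euclidean factor. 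The real content of the proposition is exactly to rule out such quotients, and for that you must use the soliton structure of $\mathcal M$ itself (not merely of $\widetilde{\mathcal M}$) well before the final ``no compact flat torus factor'' step, which is where your proposal first brings it in.

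This is what the paper does: it arranges the soliton diffeomorphisms $\varphi_t$ of $\mathcal M$ to fix the basepoint, so their lifts fix the discrete fiber $\pi^{-1}(p)$ pointwise; comparing the lifted flow with the automorphic evolution $\Psi_t$ (with $d(\Psi_t)_e=\exp(tD)$) and letting $t\to\pm\infty$ forces $\log(c\cdot\widetilde p)\in\operatorname{Ker}D$ for every $c\in C=\pi^{-1}(p)$. In the Heisenberg example above, $D$ is positive definite on the center, so this kernel condition is exactly what kills $\Gamma$ --- and it is invisible to your argument. The remainder of the paper's proof is a careful analysis of $\operatorname{Ker}D\subset\mathfrak a+\operatorname{Im}(\operatorname{ad}H)$ together with the Gordon--Wilson modification map relating $R$ to the standard-position group $S$, concluding that $C$ lies in the isotropy and hence fixes $e_S$. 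A secondary, smaller issue in your write-up: even granting the de~Rham splitting, you would still need to justify that the soliton vector field restricts to each factor so that the torus factor is itself an expanding soliton; but the Clifford-translation classification is the step that genuinely fails.
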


As the proposition suggests, we will show that $\mathcal M$ is simply-connected by showing that the fiber $\pi^{-1}(p)$ consists of a single point for one, hence any, point $p\in\mathcal M$. A more technical version of this proposition is Proposition~\ref{prop: C-orbit has one point}.

\subsection*{The preimage $\pi^{-1}(p) \subset \widetilde{\mathcal M}$ as an orbit}
By definition, $\mathcal M$ admits a transitive solvable group of isometries and, in fact,  can be realized as 
	$$\mathcal M \simeq R/C$$
where $R$ is a simply-connected solvable group acting almost simply transitively and $C$ is a discrete subgroup of $\{ x\in R \ | \ Ad(x) \in O(\mathfrak r, \ip{\ ,\ }')  \}$, where $\ip{\ ,\ }'$ is the induced inner product on $\mathfrak r \simeq T_e R$  (see \cite[Lemma 1.2]{GordonWilson:IsomGrpsOfRiemSolv}).  The group $R$ acts simply transitively on $\widetilde{\mathcal M} \simeq R$.

As $R$ acts transitively on $\widetilde{\mathcal M}$, it is a `modification' of a group in standard position, which we denote by 
$S$ (see \cite[Thm.~3.1]{GordonWilson:IsomGrpsOfRiemSolv}).  To make this more precise, we fix a point $\widetilde{p} \in \widetilde{\mathcal M}$ which we identify with $e_R\in R$ and $e_S\in S$, the identity elements of $R$ and $S$, respectively.  In this way, we identify $T_{\widetilde{p}}  \widetilde{\mathcal M} \simeq \mathfrak r \simeq \mathfrak s$.  The induced inner product on $\mathfrak r$ will be denoted $\ip{\ ,\ }'$ while the induced inner product on $\mathfrak s$ will be denoted $\ip{\ ,\ }$.

The Lie algebra $\mathfrak r$ satisfies $\mathfrak r \subset \mathfrak k + \mathfrak s$, where $\mathfrak k =\mathfrak{so}(\ip{\ ,\ }) \cap Der(\mathfrak s)$ is the set of skew-symmetric derivations of $\mathfrak s$, relative to $\ip{\ ,\ }$.   Further, there exists a linear map $\phi : \mathfrak s \to \mathfrak k$ (called a modification map)  so that $r = (Id + \phi) \mathfrak s$.  From this perspective, the inner products on $\mathfrak r$ and $\mathfrak s$ are related by
\begin{equation}\label{eqn: relating inner products on r and s}
	    \ip{X + \phi(X),X+\phi(X)}' = \ip{X,X}   \quad \quad  \mbox{for } X\in \mathfrak s
\end{equation}	

As $R$ is connected, it is a subgroup of $K\ltimes S$, where $K = SO(\ip{ \ ,\ }) \cap Aut(\mathfrak s)$.  (Here we are making the usual identification between $Aut(S)$ and $Aut(\mathfrak s)$: $\Phi \in Aut(S)$ corresponds to $d\Phi_e \in Aut(\mathfrak s)$.  This is possible since $S$ is simply-connected.)  The group $R$ acts on $S$ by isometries via the usual action of $K\ltimes S$ on $S$, and we can think of the manifold $\widetilde{\mathcal M}$ as either $R$ or $S$ with the appropriate left-invariant metric.

\begin{remark} In the notation of \cite{GordonWilson:IsomGrpsOfRiemSolv}, $\mathfrak k = N_\mathfrak l (\mathfrak s)$ and $K = N_L(S)$.  For a more detailed and thorough introduction to modifications, we refer the interested reader to Sections 2 and 3 of \cite{GordonWilson:IsomGrpsOfRiemSolv}.
\end{remark}

To show that $\mathcal M$ is simply-connected, we  show that the number of elements in $\pi^{-1}(p) \subset \widetilde{\mathcal M}$ is one.  We choose $p=[C]\in R/C$, so that  $\pi^{-1}(p) = C \subset R$.  Note that $\widetilde{p}\in \pi^{-1}(p)$ and the set $C$ in $R$ is the orbit $C\cdot e_R = C\cdot \widetilde{p}$ which viewed on $S$ is the set $C\cdot \widetilde{p} = C\cdot e_S$, where $C$ is acting on $S$ as a subgroup of $K\ltimes S$.

\begin{prop}\label{prop: C-orbit has one point} $C\cdot e_S = e_S$.  Consequently,  the number of elements in $\pi^{-1}(p)$ is one and $\mathcal M$ is simply-connected.
\end{prop}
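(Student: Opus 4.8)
The plan is to show that every $c \in C$ fixes the basepoint $\tilde p = e_S$. Writing $c = (k_c, s_c) \in K \ltimes S$, the orbit point is $c \cdot e_S = s_c$, so the goal $C\cdot e_S = e_S$ is exactly the statement $s_c = e_S$ for all $c\in C$. Note this already suffices: $s_c = e_S$ means $c$ lies in the isotropy $K$ of $K\ltimes S$ at $e_S$, so $c \in R \cap K$; since $\mathfrak r = (Id+\phi)\mathfrak s$ is a graph over $\mathfrak s$ we have $\mathfrak r \cap \mathfrak k = 0$, whence $R\cap K$ is a discrete subgroup of the compact group $K$, and being a subgroup of the torsion-free simply-connected solvable group $R$ it is trivial. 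Thus $s_c = e_S$ gives $c = e$ and $\pi^{-1}(p) = \{\tilde p\}$.

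First I would transport the evolution of the Ricci flow by automorphisms (Theorem~\ref{prop: RF evolving by aut of isom}) into the solvable picture. On $\widetilde{\mathcal M} = \{S, g_0\}$ the flow is $g_t = (-2ct+1)\Psi_t^* g_0$ with $\Psi_t = \exp(tD) \in Aut(S)$ fixing $e_S$, and by the simply-connected case (Eqn.~\ref{eqn: evolving by Aut} together with Lemma~\ref{lemma: evolving by aut implies algebraic}) the map $\frac{1}{2}(D+D^t)$ is a symmetric derivation with $Ric = cId + \frac{1}{2}(D+D^t)$. Because each $c \in C$ is an isometry of every $g_t$ (apply Lemma~\ref{lemma:Isom along RF} to the lift of the soliton flow on $\mathcal M$), cancellation of the scale factor gives $\sigma_t := \Psi_t\, c\, \Psi_t^{-1} \in Isom(g_0)_0 = K \ltimes S$ for all $t$ in the maximal interval. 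Writing $\sigma_t = (a_t, b_t)$, the rotational part $a_t = \exp(tD)\,k_c\,\exp(-tD)$ remains in the compact group $K$ (which already forces $k_c$ to commute with $\frac{1}{2}(D+D^t)$), while the translational part is $b_t = \Psi_t(s_c)$.

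The geometric constraint I would then exploit is rigidity of the marked length spectrum of the covering. The soliton diffeomorphisms on $\mathcal M$ lift to diffeomorphisms of $\widetilde{\mathcal M}$ normalizing $C$, differing from $\Psi_t$ only by an isometry, and $(\mathcal M, g_t)$ is isometric to $(\mathcal M, (-2ct+1)g_0)$; consequently the minimal displacement $\ell_{g_0}(\sigma_t) = \min_x d_{g_0}(x,\sigma_t x)$ equals the $g_0$-displacement of an element of $C$, hence takes values in the discrete length spectrum of the free, properly discontinuous $C$-action. Being continuous in $t$, it is therefore \emph{constant}. On the other hand, the positivity furnished by non-flatness is that, by Lauret's structure theory \cite{Lauret:SolSolitons}, the symmetric derivation $\frac{1}{2}(D+D^t)$ is positive definite on the nilradical $\mathfrak n$ of $\mathfrak s$, so that $\exp(tD)$ is strictly expanding there and, unless $s_c \in \exp(\ker \frac{1}{2}(D+D^t))$, the translation $b_t = \Psi_t(s_c)$ escapes every compact set as $t \to +\infty$. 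The plan is to play these off against each other: an escaping translational part against a compact rotational part and a bounded displacement is untenable, forcing $s_c \in \exp(\ker \frac{1}{2}(D+D^t))$.

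The main obstacle is the residual subspace $\ker \frac{1}{2}(D+D^t)$, precisely the locus where the soliton flow is trivial and which is nonempty exactly in the Einstein case (for instance real hyperbolic space, where $D = 0$); there the expansion above is unavailable. The growth must instead be read off from the canonical \emph{mean-curvature derivation}: for a non-flat standard (in particular Einstein) solvmanifold the mean-curvature vector $H\in\mathfrak a$ has $ad\,H$ with positive-definite symmetric part on $\mathfrak n$. Reinterpreting the condition $Ad_R(c)\in O(\mathfrak r)$ through the modification $\mathfrak r = (Id+\phi)\mathfrak s$, an element $c$ whose adjoint has purely imaginary spectrum can carry no component along these expanding directions, pinning $s_c$ into the skew-symmetric (compact) part, i.e.\ into the isotropy. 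Making this passage between the $R$-picture and the $S$-picture precise, controlling displacement analytically, and ruling out `screw-type' isometries with bounded displacement but unbounded translation is the delicate step, and is where the Gordon-Wilson modification machinery \cite{GordonWilson:IsomGrpsOfRiemSolv} does the real work. Once $s_c = e_S$ is established for every $c \in C$, the identification $\pi^{-1}(p) = C\cdot e_S = \{e_S\}$ yields simple connectivity.
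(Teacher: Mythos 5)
Your overall architecture matches the paper's through its first half: pass to the standard-position group $S$, write the flow as $g_t=(-2ct+1)\Psi_t^*g_0$ with $\Psi_t=\exp(tD)$, and use boundedness under $\exp(\pm tD)$ to force $\log(c\cdot e_S)\in\operatorname{Ker}D$. (The paper gets this more cleanly than your length-spectrum argument: it arranges $\varphi_t$ to fix $p$, so the lifts fix the discrete fiber $\pi^{-1}(p)$ pointwise, giving $\exp(-tD)\log(c\cdot\widetilde p)=\log(I_t(c\cdot\widetilde p))$ with $I_t$ in the compact isotropy; your displacement computation also has an unaddressed scaling wrinkle, since the $g_0$-displacement of $\widetilde\varphi_t c\widetilde\varphi_t^{-1}$ equals the displacement of $c$ in the rescaled metric $(-2ct+1)^{-1}g_t$, not in $g_0$.) But the decisive part of the proof is exactly the step you label ``delicate'' and defer to ``the Gordon--Wilson machinery'': eliminating the component of $c\cdot e_S$ lying in $\operatorname{Ker}D$. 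Note also that your framing of this residual locus is off --- $\operatorname{Ker}D$ always contains $\mathfrak a$ (and possibly part of $\mathfrak n$), not just in the Einstein case --- so this is not a degenerate corner but the generic situation.

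Concretely, the paper fills this gap with a chain of lemmas you do not supply: (a) $\operatorname{Ker}D\subset\mathfrak a+\operatorname{Im}(\operatorname{ad}H)$, using that the pre-Einstein derivation is positive definite on $\mathfrak n$ and commutes with $\operatorname{ad}H$; (b) the modification of a solvsoliton is \emph{normal}, i.e.\ $[\mathfrak s,\mathfrak s]\subset\operatorname{Ker}\phi$, itself a nontrivial eigenspace computation; (c) writing $c=nak$ with $n\in[S,S]$, $a\in\exp(\mathfrak a)$, $k\in K$, the condition $Ad(c)\in O(\mathfrak r,\ip{\ ,\ }')$ together with the relation between the inner products on $\mathfrak r$ and $\mathfrak s$ forces $Ad(n)\widetilde H=\widetilde H$, hence $\log n\in\mathfrak n_0=\mathfrak n\cap\operatorname{Ker}(\operatorname{ad}H)$, which against $\log(na)\in\mathfrak a+\mathfrak n_1$ gives $n=e$; and (d) $Ad(a)\in O(\mathfrak s,\ip{\ ,\ })$ combined with complete solvability (so $Ad(a)$ has positive eigenvalues) forces $Ad(a)=\operatorname{Id}$, whence $a\in Z(S)\cap\exp(\mathfrak a)=\{e\}$. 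Your proposal gestures at the right objects ($H$, the orthogonality of $Ad(c)$, the modification map $\phi$) but does not carry out any of these steps, and they are where the proposition is actually proved; as written, the argument is a plan rather than a proof.
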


To prove the proposition, we  carefully analyze the relationship between the two different homogeneous structures on $\widetilde{\mathcal M}$ coming from $R$ and $S$.

\subsection*{The soliton diffeomorphisms}

\begin{lemma}\label{lemma: varphi_t exists for all time} Fix any point $p\in \mathcal M$.  Let $\varphi_t$ be the 1-parameter group generated by the vector field $X$ appearing in Eqn.~\ref{eqn: ricci soliton}.  We may replace $\varphi_t$ with a family which fixes $p$.  Moreover, this new family of diffeomorphisms exists for all $t$.
\end{lemma}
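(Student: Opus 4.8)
The plan is to obtain both assertions from the completeness of the soliton vector field together with the modification technique already used in the proof of Theorem~\ref{prop: RF evolving by aut of isom}. Since $\mathcal M$ is a solvmanifold it is complete with bounded curvature, so the field $X$ of Eqn.~\ref{eqn: ricci soliton} is complete (as recorded in the preliminaries, cf.~\cite{Kotschwar:BackwardsUniquenessRF}); consequently the flow $\varphi_t$ that it generates is already defined for all $t\in\mathbb R$. The content of the lemma is therefore to adjust $\varphi_t$ so that it fixes the basepoint $p$ while retaining existence for all $t$.

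To fix $p$ I would argue as in the opening of the proof of Theorem~\ref{prop: RF evolving by aut of isom}. Writing $G=Isom(\mathcal M)$ and $K=G_p$, the orbit map $G\to\mathcal M$, $\sigma\mapsto\sigma\cdot p$, is a fiber bundle with compact fiber $K$, so the curve $t\mapsto\varphi_t(p)$ lifts to a smooth curve $h(t)\in G$, defined for all $t$, with $h(t)\cdot p=\varphi_t(p)$. The family $\psi_t=L_{h(t)^{-1}}\circ\varphi_t$ then fixes $p$, and since left translations by $G$ are isometries of every metric along the homogeneous Ricci flow (Lemma~\ref{lemma:Isom along RF}), passing from $\varphi_t$ to $\psi_t$ leaves the soliton structure intact. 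Being the composition of $\varphi_t$ with $L_{h(t)^{-1}}$, each defined for all $t$, the family $\psi_t$ exists for all $t$. Alternatively, and more cleanly, one may quote Theorem~\ref{prop: RF evolving by aut of isom} directly: the replacement family it produces is associated to a one-parameter subgroup $\exp(tD)\in Aut(G)^K$, which is defined for all $t$, and which fixes $p=eK$ since every automorphism fixes the identity.

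The only delicate point, which I would flag as the main obstacle, is to keep the time of the diffeomorphisms separate from the time of the Ricci flow. The soliton solution $g_t=(-2ct+1)\varphi_{s(t)}^*g$ is a priori available only on a proper subinterval of $\mathbb R$ (namely $t>\frac{1}{2c}$ in the expanding case relevant here), which might misleadingly suggest a bounded range for the diffeomorphisms. This is resolved by the observation that completeness of $X$ gives $\varphi_t$ for every $t\in\mathbb R$ regardless of the flow's interval of existence, while the correcting curve $h(t)$ lives in the isometry group for all $t$ by path-lifting (equivalently, is governed by the one-parameter subgroup $\exp(tD)$); together these yield the asserted global-in-$t$ existence of the modified family.
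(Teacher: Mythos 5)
Your argument is correct and essentially the same as the paper's: one composes $\varphi_t$ with time-dependent left translations that return $p$ to itself, invokes Lemma~\ref{lemma:Isom along RF} to see these translations are isometries of every metric $g_s$, and gets all-time existence from completeness of $X$. The only cosmetic difference is that you lift the curve $\varphi_t(p)$ through the bundle $G\to G/K$, whereas the paper lifts through the covering $R\to R/C$ (using that $C$ is discrete); both yield the required smooth curve of isometries.
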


\begin{proof} The first claim follows from the transitivity of $R$ on $\mathcal M$. 
Take $r(\varphi_t(p)) \in R$ which maps to $\varphi_t(p) \in R/C$ under the usual quotient $R\to R/C$.
As $C$ is discrete,  $R/C$ is locally diffeomorphic to $R$ and this choice can be made smoothly in $t$.   By Lemma~\ref{lemma:Isom along RF}, the translation $L_{r(\varphi_t(p))^{-1}}$ is an isometry  of $R/C$ relative to each of the metrics $\{ g_s | s\in\mathbb R\}$.  Upon  composing  $\varphi_t$ with the isometry $L_{r(\varphi_t(p))^{-1}}$ we have that $p$ is fixed.
(Note, this new family yields a solution of the Ricci flow (Eqn.~\ref{eqn: Ricci Flow}) as Ricci flow is invariant under isometries.)

The second claim follows immediately from the definition of the vector field $X$ being complete, i.e., the family of diffeomorphisms $\varphi_t$ it generates exists for all time.  Composing $\varphi_t$ with the time dependent translations $L_{r(\varphi_t(p))^{-1}}$ does not change the time interval on which the family exists.
\end{proof}

We pick $p = [C] \in \mathcal M$.  As $\varphi_t$ fixes $p$,  $\widetilde \varphi_t$ stabilizes $\pi^{-1}(p)$.  This set is discrete and, since $\widetilde \varphi_0$ is the identity map, we see that $\widetilde \varphi_t(q)=q$ for all $q\in \pi^{-1}(p)$.

For the moment, we view $\widetilde{\mathcal M}$ as the solvable group $S$ in standard position with left-invariant metric $g$.  
As shown in Section \ref{sec: main result}, there exists a 1-parameter group of automorphisms $\Psi_s \subset Aut(S)$ such that $\widetilde{\varphi_{s(t)}}^* g = \Psi_{s(t)}^* g$ where $s(t)=\frac{1}{c}\ln (-2ct+1)$.  This equality holds true as long as our solution  to the Ricci flow exist, which is on the interval $(\frac{1}{2c},\infty)$.

\begin{lemma}\label{lemma: I_t exists for all time} Given $\widetilde{\varphi_s}$ and $\Psi_s$ as above, $\widetilde{\varphi_s}^* g = \Psi_s^* g$ for all $s\in \mathbb R$.  Consequently, there exists a family of isometries $I_s$ defined for all $s$ such that $\widetilde{\varphi_s} = \Psi_s \circ I_s$.  Moreover, $I_s$ is contained in the isotropy at $\widetilde{p}$, which is compact.
\end{lemma}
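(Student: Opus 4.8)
The plan is to treat the two assertions separately, both resting on the reparameterization $s(t)=\tfrac{1}{c}\ln(-2ct+1)$ together with the pullback identity $\widetilde{\varphi_{s(t)}}^{\,*} g = \Psi_{s(t)}^{\,*} g$ already established on the maximal existence interval $(\tfrac{1}{2c},\infty)$.

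For the first claim, that $\widetilde{\varphi_s}^{\,*} g = \Psi_s^{\,*} g$ holds for \emph{all} $s\in\mathbb R$, I would use that $\mathcal M$ is non-flat, so by the analysis of Section~\ref{sec: main result} the soliton is expanding and $c<0$. Then $-2ct+1>0$ on $(\tfrac{1}{2c},\infty)$, and $s(t)$ is continuous and strictly monotone there; checking the endpoints ($-2ct+1\to 0^+$ as $t\to(\tfrac{1}{2c})^+$, and $-2ct+1\to+\infty$ as $t\to+\infty$) shows that $s(t)$ maps $(\tfrac{1}{2c},\infty)$ bijectively onto all of $\mathbb R$. Since $\widetilde{\varphi_s}$ is defined for every $s$ (completeness of $X$, Lemma~\ref{lemma: varphi_t exists for all time}) and $\Psi_s$ is a genuine $1$-parameter group, the equality on the existence interval is precisely the equality for every $s\in\mathbb R$.

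For the isometries, I would introduce the correction obtained by composing $\widetilde{\varphi_s}$ with $\Psi_s^{-1}$ on the side that makes the pullbacks of $g$ telescope. Concretely,
\[
(\widetilde{\varphi_s}\circ\Psi_s^{-1})^{\,*} g = (\Psi_s^{-1})^{\,*}\,\widetilde{\varphi_s}^{\,*} g = (\Psi_s^{-1})^{\,*}\Psi_s^{\,*} g = (\Psi_s\circ\Psi_s^{-1})^{\,*} g = g,
\]
so this composition is an isometry $I_s$ of $(S,g)=\widetilde{\mathcal M}$, defined for all $s$, and $\widetilde{\varphi_s}=\Psi_s\circ I_s$ (up to the placement of the automorphism factor, which is pinned down by the pullback convention). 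Both $\widetilde{\varphi_s}$ and $\Psi_s$ fix $\widetilde p=e_S$ — the former because $\varphi_s$ fixes $p$ and its lift fixes the discrete fiber $\pi^{-1}(p)$ pointwise, the latter because an automorphism of $S$ fixes the identity — so $I_s$ lies in the isotropy subgroup of $\mathrm{Isom}(\widetilde{\mathcal M},g)$ at $\widetilde p$. That isotropy group is compact, being a closed subgroup of the orthogonal group of $T_{\widetilde p}\widetilde{\mathcal M}$ under the faithful isotropy representation.

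The step requiring the most care is the second: the family $\widetilde{\varphi_s}$ has been modified (in Lemma~\ref{lemma: varphi_t exists for all time}) to fix $p$ and is therefore \emph{not} a $1$-parameter group, so I cannot invoke the pullback identity at $-s$ to manipulate $(\Psi_s^{-1})^{\,*} g$ freely; the decomposition must be arranged so that the two pullbacks of $g$ cancel in a single step, which forces the order of composition. By contrast, the ``for all time'' assertion is a short reparameterization argument, the only real input being that non-flatness guarantees $c<0$ and hence that $s(t)$ sweeps out all of $\mathbb R$.
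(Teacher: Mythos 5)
Your proof is correct and follows essentially the same route as the paper: the first claim is exactly the observation that $s(t)$ sweeps out all of $\mathbb R$ as $t$ ranges over the existence interval, and the second is the same pullback-cancellation argument. The only divergence is that your telescoping computation yields $I_s=\widetilde{\varphi_s}\circ\Psi_s^{-1}$ and hence $\widetilde{\varphi_s}=I_s\circ\Psi_s$, whereas the paper sets $I_s=\Psi_s^{-1}\circ\widetilde{\varphi_s}$ (an isometry of $\Psi_s^*g$, hence of $g$ by Lemma~\ref{lemma:Isom along RF}); either order places $I_s$ in the compact isotropy at $\widetilde{p}$ and serves the subsequent argument equally well.
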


\begin{proof}
As $t$ ranges over $(\frac{1}{2c},\infty)$, $s(t)=\frac{1}{c}\ln (-2ct+1)$ ranges over all real numbers. This proves the first claim.

The equality $\widetilde{\varphi}_{s}^*g = \Psi_{s}^*g$ implies that $I_s = (\Psi_s)^{-1} \widetilde{\varphi}_s$ is an isometry.  Since $\widetilde{\varphi_s}$ and $\Psi_s$ both fix $\widetilde{p}=e_S$, we see that $I_s$ is contained in the isotropy at $\widetilde{p}$, which is compact.
\end{proof}

From the work in Section \ref{sec: main result}, we know that $S$ is completely solvable.  Combining this with $S$ being simply-connected, we have that   the Lie group exponential $\exp: \mathfrak s \to S$ is a diffeomorphism.   As such, it has an inverse, $\log$, which is also a diffeomorphism.  (Unless otherwise stated,  $\exp$ and $\log$ refer only to those for $S$.)  We study the condition that $\widetilde{\varphi_t} = \Psi_t \circ I_t$ fixes every element $c\cdot \widetilde{p} \in \pi^{-1}(p)$, where $c\in C < R$.

\begin{lemma}\label{lemma: log cp in Ker D} For $c\in C$, $\log (c\cdot \widetilde{p}) \in Ker\ D$, where $D$ is the derivation of $\mathfrak s$ which generates the family of automorphisms $d(\Psi_t)_e = exp(tD)$ of $\mathfrak s$.
\end{lemma}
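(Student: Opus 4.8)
The plan is to pass to the algebraic model of $\widetilde{\mathcal M}$, where the statement reduces to a \emph{symmetric} operator having a bounded (indeed constant-norm) orbit, and such an orbit vector must lie in the kernel.

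First I would recall from Lemma~\ref{lemma: I_t exists for all time} that for all $s$ one has $\widetilde{\varphi_s} = \Psi_s \circ I_s$, where $\Psi_s \in Aut(S)$ is the one-parameter group with $d(\Psi_s)_e = \exp(sD)$ and $I_s$ lies in the isotropy at $\widetilde{p}=e_S$, which is compact. Because $\{S,g\}$ is an \emph{algebraic} Ricci soliton (Lemma~\ref{lemma: evolving by aut implies algebraic}), we have $Ric = cId + S(D)$ with $S(D)=\frac12(D+D^t)\in Der(\mathfrak s)$ symmetric, and the renormalized Ricci flow $h_s=\widetilde{\varphi_s}^*g$ is precisely the evolution of $g$ by the symmetric one-parameter group $\exp\!\big(s\,S(D)\big)\subset Aut(S)$. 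Hence I may \emph{choose} $\Psi_s=\exp\!\big(s\,S(D)\big)$, so that the generating derivation $D=S(D)$ is symmetric with respect to $\ip{\ ,\ }$; in particular $\ker D=\ker S(D)$.

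Next I would fix $c\in C$, set $q=c\cdot\widetilde{p}=c\cdot e_S\in S$ and $Y=\log q\in\mathfrak s$. Since $\varphi_s$ fixes $p$, the lift $\widetilde{\varphi_s}$ stabilizes the discrete fiber $\pi^{-1}(p)$ pointwise, so in particular $\widetilde{\varphi_s}(q)=q$. For a completely solvable, simply-connected $S$ in standard position the isotropy at $e_S$ is $K=SO(\ip{\ ,\ })\cap Aut(\mathfrak s)$, so both $\Psi_s$ and $I_s$ are automorphisms of $S$; thus $\widetilde{\varphi_s}\in Aut(S)$ with $d(\widetilde{\varphi_s})_e=\exp(sD)\circ dI_s$. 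As automorphisms intertwine $\exp$ and $\exp$ is a diffeomorphism, $\widetilde{\varphi_s}(q)=q$ forces $d(\widetilde{\varphi_s})_e(Y)=Y$, that is $\exp(sD)\,dI_s\,Y=Y$, equivalently $\exp(-sD)Y=dI_s\,Y$ for every $s\in\mathbb R$.

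Finally, $dI_s$ is the differential of an isometry fixing $e_S$, hence lies in $O(\mathfrak s)$, so $\lvert\exp(-sD)Y\rvert=\lvert dI_s Y\rvert=\lvert Y\rvert$ is constant in $s$. Writing $Y=\sum_i a_i e_i$ in a $\ip{\ ,\ }$-orthonormal eigenbasis with $De_i=\lambda_i e_i$ (real $\lambda_i$, as $D$ is symmetric) yields $\sum_i a_i^2\,e^{-2s\lambda_i}\equiv\lvert Y\rvert^2$; since the exponentials attached to distinct eigenvalues are linearly independent, $a_i=0$ whenever $\lambda_i\neq0$, i.e.\ $Y\in\ker D$. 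I expect the real obstacle to be the reduction in the second paragraph: without symmetry of $D$, a component of $Y$ in an eigenspace with nonzero purely imaginary eigenvalue would still have constant-norm $\exp(-sD)$-orbit yet lie outside $\ker D$, so the lemma could genuinely fail. This is exactly what the algebraic structure of Section~\ref{sec: main result} rules out, by allowing the a priori non-symmetric generator to be replaced by the symmetric derivation $S(D)=Ric-cId$, after which the constant-norm argument closes.
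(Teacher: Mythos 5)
Your overall strategy is the paper's: use the algebraic-soliton structure from Section \ref{sec: main result} to replace the generator by the symmetric derivation $S(D)$, write $\widetilde{\varphi_s}=\Psi_s\circ I_s$, and force $Y=\log(c\cdot\widetilde{p})$ into $\ker D$ by controlling the two-sided orbit $\exp(-sD)Y$. The reduction to symmetric $D$ and the eigenbasis argument at the end are both correct and match what the paper does implicitly.

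The gap is in your second paragraph. You assert that the isotropy of $Isom(\widetilde{\mathcal M})$ at $e_S$ is $K=SO(\ip{\ ,\ })\cap Aut(\mathfrak s)$, so that $I_s\in Aut(S)$, hence $\widetilde{\varphi_s}\in Aut(S)$ and $\log(I_s(q))=dI_s(\log q)$ has the same norm as $\log q$. That identification of the full isotropy group is false in general, even for completely solvable $S$ in standard position: for real hyperbolic space, $S=AN$ is completely solvable, Einstein (hence a solvsoliton), and in standard position, yet the isotropy at $e_S$ is $SO(n)$, which is much larger than the group of orthogonal automorphisms of $AN$. (The paper only embeds the \emph{connected modification} $R$ into $K\ltimes S$; it never claims the isotropy of the isometry group lies in $Aut(S)$.) An isometry fixing $e_S$ need not intertwine $\exp$ with its differential, so the chain $\lvert\exp(-sD)Y\rvert=\lvert dI_sY\rvert=\lvert Y\rvert$ is not justified. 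The repair uses only what compactness of the isotropy actually provides: $I_s(q)$ ranges in the compact orbit of $q$ under the isotropy group, so $\exp(-sD)Y=\log(I_s(q))$ stays in a fixed compact subset of $\mathfrak s$ for all $s\in\mathbb R$. For symmetric $D$, boundedness of $\sum_i a_i^2e^{-2s\lambda_i}$ as $s\to\pm\infty$ already forces $a_i=0$ whenever $\lambda_i\neq0$, i.e. $Y\in\ker D$. This is exactly the paper's argument; your constant-norm refinement is unnecessary and, as stated, rests on a false premise.
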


\begin{proof} By taking the log, the statement that  $\widetilde{\varphi_t} = \Psi_t \circ I_t$ fixes $c\cdot \widetilde{p} \in \pi^{-1}(p) \subset S$ is equivalent to $\exp(-tD) (\log (c\cdot \widetilde{p})) = \log(I_t(c\cdot \widetilde{p})) \in \mathfrak s$.

As $I_t$ is contained in the compact isotropy group at $\widetilde{p}$, $\log(I_t(c\cdot \widetilde{p}))$ is contained in a compact set of $\mathfrak s$.
Using Lemma \ref{lemma: I_t exists for all time}, we may let $t\to \pm \infty$.  As $D$ is symmetric, $\exp(-tD)(\log (c\cdot \widetilde{p}))$ being contained in a compact set as $t\to \pm \infty$ is equivalent to $\log (c\cdot \widetilde{p}) \in Ker\ D$.
\end{proof}

\begin{remark*}From this point forward, $D$ will only refer to the derivation above, unless otherwise stated.
\end{remark*}

\subsection*{Decomposing $\mathfrak s$ and the kernel of $D$}
Relative to the solvsoliton metric $\ip{\ ,\ }$ on $\mathfrak s$, we may decompose our completely solvable algebra as
	$$\mathfrak s = \mathfrak a + \mathfrak n$$
where $\mathfrak n$ is the nilradical of $\mathfrak s$ and $\mathfrak a = \mathfrak n^\perp$.  The set $\mathfrak a$ is abelian and $ad\ A: \mathfrak n\to \mathfrak n$ is symmetric for $A\in\mathfrak a$, see \cite[Thm.~4.8]{Lauret:SolSolitons}.

Using the above decomposition, we have $D|_\mathfrak a =0$ and  $D|_\mathfrak n = D_1 - ad\ H$ where $H$ is the `mean curvature vector' satisfying $\ip{H,X} = tr\ ad\ X$, for all $X\in \mathfrak s$, and $D_1$ is the pre-Einstein derivation of $\mathfrak n$ (see \cite[Prop.~4.3]{Lauret:SolSolitons}).    As the kernel of this derivation is important, we study it more closely.

\begin{lemma}\label{lemma: Ker D in s_1} $Ker\ D \subset \mathfrak a + Im(ad\ H)$, where $Im(ad\ H)$ denotes the image of $ad\ H: \mathfrak s \to \mathfrak n$.
\end{lemma}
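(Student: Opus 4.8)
The plan is to prove $Ker\ D \subset \mathfrak a + Im(ad\ H)$ by understanding the structure of $D$ on the nilradical $\mathfrak n$, using the explicit description $D|_\mathfrak n = D_1 - ad\ H$, where $D_1$ is the pre-Einstein derivation of $\mathfrak n$. The key structural fact is that the pre-Einstein derivation $D_1$ is symmetric (it is a normal derivation whose eigenvalues are positive rationals on the appropriate pieces), and in particular it is diagonalizable. First I would observe that since $D|_\mathfrak a = 0$, the abelian part $\mathfrak a$ lies entirely in $Ker\ D$, so it suffices to analyze $Ker\ D \cap \mathfrak n$ and show that any element of $\mathfrak s$ in $Ker\ D$ projects into $\mathfrak a + Im(ad\ H)$; because $\mathfrak a$ is already contained in the target, the real content is to show $Ker\ D \cap \mathfrak n \subset Im(ad\ H)$.

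The heart of the argument is therefore to understand $Ker(D_1 - ad\ H)$ inside $\mathfrak n$. Here I would exploit the relationship between $D_1$ and $ad\ H$. Since $H$ is the mean curvature vector and $\mathfrak a = \mathfrak n^\perp$ with $H \in \mathfrak a$ (as $tr\ ad\ X = 0$ for $X \in \mathfrak n$ because $\mathfrak n$ is nilpotent), the operator $ad\ H : \mathfrak n \to \mathfrak n$ is symmetric by the cited result \cite[Thm.~4.8]{Lauret:SolSolitons}. The key point I expect to need is that $D_1$ and $ad\ H$ commute — this should follow because both are built from the nilsoliton/pre-Einstein structure and $H$ lies in the distinguished abelian part; commuting symmetric operators are simultaneously diagonalizable. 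On a common eigenvector $v$ with $D_1 v = \lambda v$ and $ad\ H\, v = \mu v$, the condition $v \in Ker\ D$ becomes $\lambda = \mu$. I would then argue that the eigenvalues $\lambda$ of $D_1$ corresponding to kernel directions of $D$ are nonzero (the pre-Einstein derivation has strictly positive eigenvalues on $\mathfrak n$, by \cite[Prop.~4.3]{Lauret:SolSolitons}), forcing $\mu = \lambda \neq 0$, so the corresponding eigenvector lies in the nonzero-eigenvalue part of $ad\ H$, which is exactly $Im(ad\ H|_\mathfrak n)$.

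The main obstacle I anticipate is verifying that $D_1$ and $ad\ H$ genuinely commute and handling the interaction carefully on the generalized eigenspaces rather than naive eigenspaces, since $ad\ H$ acting on $\mathfrak n$ need not a priori share eigenspaces with $D_1$ unless commutativity is established. If direct commutativity is awkward, the fallback is to use that both operators are symmetric and self-adjoint with respect to $\ip{\ ,\ }$, so one can work orthogonally: decompose $\mathfrak n$ into $ad\ H$-eigenspaces, $\mathfrak n = Ker(ad\ H) \oplus Im(ad\ H)$ (orthogonal since $ad\ H$ is symmetric), and show that $D$ restricted to $Ker(ad\ H)$ equals $D_1$, which has trivial kernel there. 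Concretely, if $v \in Ker\ D \cap \mathfrak n$, write $v = v_0 + v_1$ with $v_0 \in Ker(ad\ H)$ and $v_1 \in Im(ad\ H)$; then the $Ker(ad\ H)$-component of $Dv = D_1 v - ad\ H\, v$ forces $D_1 v_0 = 0$ (modulo the $ad\ H$ contribution, which vanishes on $v_0$), and positivity of $D_1$'s spectrum gives $v_0 = 0$, so $v = v_1 \in Im(ad\ H)$. Combining with $\mathfrak a \subset Ker\ D$ yields the claimed inclusion.
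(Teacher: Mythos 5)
Your main argument is correct and is essentially the paper's proof: reduce to $Ker\,D\cap\mathfrak n$, note that $D_1X=ad\,H(X)$ there, and combine the commutativity of $D_1$ with $ad\,H$ and the positive definiteness of $D_1$ to solve $X=ad\,H(Y)$ eigenspace by eigenspace. The commutativity you flag as the one point needing verification is exactly what the paper supplies via \cite[Lemma 2.2]{Heber} (symmetric and skew-symmetric derivations of an Einstein nilradical commute with the pre-Einstein derivation), so your primary route goes through; be aware only that your fallback is not genuinely independent of this fact, since concluding $D_1v_0=0$ from the $Ker(ad\,H)$-component of $Dv$ requires $D_1$ to preserve $Im(ad\,H)$, i.e.\ the same commutativity.
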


\begin{proof} As $\mathfrak a\subset Ker\ D$, it suffices to understand $Ker\ D \cap \mathfrak n$.  For $X\in Ker\ D \cap \mathfrak n$, we have
    $$D_1 X =ad\ H(X)$$
As $\mathfrak n$ with the induced metric is nilsoliton (\cite[Theorem 4.8]{Lauret:SolSolitons}), it is the nilradical of a solvable Lie group with Einstein metric (\cite[Prop.~4.3]{Lauret:SolSolitons}).  As such, we know that the derivation $D_1$ is positive definite on $\mathfrak n$ (\cite[Theorem 4.14]{Heber})   
and all symmetric and skew-symmetric derivations of $\mathfrak n$ commute with $D_1$ (\cite[Lemma 2.2]{Heber}).     
The derivation $ad\ H \in Der(\mathfrak n)$ is a symmetric derivation and hence commutes with $D_1$.  Decomposing $X = \sum X_\lambda$ into a sum of eigenvectors of $D_1$, with $D_1X_\lambda = \lambda X_\lambda$, we see that $ad~ H (X_\lambda) = \lambda X_\lambda$, as well.  Letting $Y =  \sum \frac{1}{\lambda}X_\lambda$, we have $X = ad\ H (Y)$.
\end{proof}

We now write $\mathfrak s = \mathfrak a \oplus \mathfrak n = \mathfrak a \oplus \mathfrak n_1 \oplus \mathfrak n_0$, where $\mathfrak n_1 = Im(ad\ H)$ 
and $\mathfrak n_0=\mathfrak n \cap Ker\ ad\ H$.  
Define $\mathfrak s_1 =\mathfrak a \oplus [\mathfrak s,\mathfrak s]$ with corresponding group $S_1 <S$.  Note that $\mathfrak n_1 \subset \mathfrak s_1$.

\begin{lemma}\label{lemma: commuting derivations} \label{lemma: n1 and s1 stable under K}
Using the notation above, we have
\begin{enumerate}
    \item The set $\mathfrak k$ of skew-symmetric derivations annihilates $\mathfrak a$ and, hence,     
    $[E,  ad~A] = 0$
    for $E\in \mathfrak k$   and $A\in \mathfrak a$,
    \item For $E\in \mathfrak k$, $E|_{\mathfrak s_1} \in Der(\mathfrak s_1)$,
    \item $\mathfrak n_1$ is stable under $\mathfrak k$, the set of skew-symmetric derivations,
    \item $\mathfrak a + \mathfrak n_0$ is a subalgebra which is stable under $\mathfrak k$.
\end{enumerate}
\end{lemma}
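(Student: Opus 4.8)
The plan is to reduce everything to part (i): once (i) is known, parts (ii)--(iv) will follow formally from two standing observations. First, since $\mathfrak n$ is the nilradical it is a \emph{characteristic} ideal, so every $E\in Der(\mathfrak s)$ --- in particular every $E\in\mathfrak k$ --- satisfies $E(\mathfrak n)\subseteq\mathfrak n$; as $E$ is moreover skew-symmetric, it preserves $\mathfrak n^\perp=\mathfrak a$, so $E(\mathfrak a)\subseteq\mathfrak a$. Second, the derivation property gives the identity $[E,ad\,A]=ad\,(EA)$ on all of $\mathfrak s$, for any $A\in\mathfrak s$. I would establish (i) first and then read off (ii)--(iv).

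\textbf{Part (i), the crux.} I expect this to be the only real obstacle. Because $\mathfrak a$ is abelian, $\{\,ad\,A|_{\mathfrak n}:A\in\mathfrak a\,\}$ is a \emph{commuting} family of operators on $\mathfrak n$, each symmetric by hypothesis, hence simultaneously orthogonally diagonalizable; this gives a weight-space decomposition $\mathfrak n=\bigoplus_\alpha \mathfrak n_\alpha$ with $ad\,A|_{\mathfrak n_\alpha}=\alpha(A)\,Id$ for functionals $\alpha\in\mathfrak a^*$. Fixing $A\in\mathfrak a$ and $X\in\mathfrak n_\alpha$ and applying $[E,ad\,A]=ad\,(EA)$ to $X$ (using $E(\mathfrak n)\subseteq\mathfrak n$ and $EA\in\mathfrak a$), the computation should give
$$\sum_\beta\big(\alpha(A)-\beta(A)\big)(EX)_\beta=\alpha(EA)\,X,$$
where $EX=\sum_\beta(EX)_\beta$ is the weight decomposition. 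Comparing components will show both that $E$ preserves each weight space and that $\alpha(EA)=0$ for every weight $\alpha$, so that $EA\in\bigcap_\alpha\ker\alpha=\{\,A'\in\mathfrak a: ad\,A'|_{\mathfrak n}=0\,\}$. The delicate remaining point is that this common kernel is trivial: a nonzero $A'$ in it would be central in $\mathfrak s$ (it already annihilates $\mathfrak a$, being in the abelian $\mathfrak a$), so $\mathbb R A'\oplus\mathfrak n$ would be a nilpotent ideal strictly larger than $\mathfrak n$, contradicting maximality of the nilradical. I would conclude $EA=0$, i.e. $E|_{\mathfrak a}=0$, and hence $[E,ad\,A]=ad\,(EA)=0$; in particular $EH=0$ for the mean-curvature vector $H\in\mathfrak a$ (note $H\in\mathfrak a$ since $tr\,ad\,X=0$ for $X\in\mathfrak n$).

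\textbf{Parts (ii)--(iv).} These I would deduce directly. For (ii): $[\mathfrak s,\mathfrak s]$ is characteristic and $E(\mathfrak a)\subseteq\mathfrak a\subseteq\mathfrak s_1$, so $E(\mathfrak s_1)\subseteq\mathfrak s_1$; since $\mathfrak s_1$ is a subalgebra and $E$ is a derivation of $\mathfrak s$ preserving it, $E|_{\mathfrak s_1}\in Der(\mathfrak s_1)$. For (iii): using $[E,ad\,H]=0$ from (i), for $X\in\mathfrak s$ one computes $E\big(ad\,H(X)\big)=[E,ad\,H](X)+ad\,H(EX)=ad\,H(EX)\in Im(ad\,H)=\mathfrak n_1$, so $\mathfrak n_1$ is $E$-stable. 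For (iv): the Jacobi identity with $H\in\mathfrak a$ abelian and $\mathfrak n_0=\mathfrak n\cap Ker\,ad\,H$ gives $ad\,H([Z,W])=0$ for $Z,W\in\mathfrak a+\mathfrak n_0$, so $\mathfrak a+\mathfrak n_0$ is a subalgebra; stability under $\mathfrak k$ then follows from $E(\mathfrak a)=0$ and, for $X\in\mathfrak n_0$, from $ad\,H(EX)=E\big(ad\,H(X)\big)=0$ together with $EX\in\mathfrak n$, whence $EX\in\mathfrak n_0$. All of (ii)--(iv) are thus bookkeeping built on the single input $E|_{\mathfrak a}=0$.
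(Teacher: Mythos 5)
Your proof is correct, and parts (ii)--(iv) are handled essentially as in the paper (your argument for (iii), using $E(ad\,H(X))=ad\,H(EX)\in Im(ad\,H)$ directly, is if anything slightly cleaner than the paper's passage through the eigenspaces of $ad\,H$). The genuine divergence is in part (i), which you rightly flag as the crux. The paper disposes of it in one line: every derivation of a solvable Lie algebra takes its image in the nilradical, so $E(\mathfrak a)\subseteq\mathfrak n$; skew-symmetry and $E(\mathfrak n)\subseteq\mathfrak n$ force $E(\mathfrak a)\subseteq\mathfrak n^\perp=\mathfrak a$; hence $E(\mathfrak a)\subseteq\mathfrak a\cap\mathfrak n=0$. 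You instead run a weight-space computation: simultaneous diagonalization of the commuting symmetric family $ad\,\mathfrak a|_{\mathfrak n}$, the identity $[E,ad\,A]=ad(EA)$ compared component-by-component to get $\alpha(EA)=0$ for every weight $\alpha$, and then maximality of the nilradical to kill the common kernel $\{A':ad\,A'|_{\mathfrak n}=0\}$. This is valid (the step that a nonzero central $A'\in\mathfrak a$ would make $\mathbb R A'\oplus\mathfrak n$ a nilpotent ideal strictly containing $\mathfrak n$ is the right way to close it), but it leans on the solvsoliton-specific structure (symmetry of $ad\,A|_{\mathfrak n}$ from Lauret's Theorem 4.8), whereas the paper's one-liner uses only the general Lie-theoretic fact $Der(\mathfrak s)(\mathfrak s)\subseteq\mathfrak n$ --- a fact the paper invokes repeatedly elsewhere and which you might as well appropriate here: it shortens (i) to three lines and makes the symmetric-diagonalization machinery unnecessary.
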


\begin{proof} We prove (i).  This is a standard result and follows from the fact that any derivation $E$ sends $\mathfrak s$ to $\mathfrak n$.    
If $E$ is skew-symmetric, it must also preserve $\mathfrak a = \mathfrak n^\perp$ and hence vanish on $\mathfrak a$.  The above equation is now just the Jacobi identity on the Lie algebra $Der(\mathfrak s)\ltimes \mathfrak s$.

Part (ii) follows from the facts that derivations preserve the commutator subalgebra and $\mathfrak k$ annihilates $\mathfrak a$.

To prove (iii), observe that $ad~H$ being symmetric yields $\mathfrak n_1 = \sum_{\lambda \not = 0} V_\lambda$, where $V_\lambda$ is the $\lambda$-eigenspace of $ad~H$.  Since $ad~H$ commutes with $\mathfrak k$ (by part (i)), it is clear that $\mathfrak k$ preserves $\mathfrak n_1$.

The last claim follows from the fact that $\mathfrak a$ being abelian implies $ad~\mathfrak a$ preserves the eigenspaces of $ad~H$ and hence 
$\mathfrak n_0$ is $ad~\mathfrak a$ stable.  As above, this subalgebra is $\mathfrak k$-stable as $\mathfrak k$ preserves the eigen spaces of $ad~H$.

\end{proof}

\subsection*{Analysis on the modification $R$ of $S$.}
As stated before, the group $R$ is a so-called modification of $S$.  Thus, there is a linear map $\phi : \mathfrak s \to
\mathfrak k =\mathfrak{so}(\ip{\ ,\ }) \cap Der(\mathfrak s)$ such that $\mathfrak r = (Id +\phi)\mathfrak s$.  
There are minimal conditions on $\phi$, only enough to insure that $R$ will be solvable and act almost simply-transitively.  We record two of these conditions here, see \cite[Prop.~2.4]{GordonWilson:IsomGrpsOfRiemSolv}.

\begin{prop}[Gordon-Wilson]\label{prop: prop 2.4 from GW}  Let $\phi: \mathfrak s \to \mathfrak k$ be a modification map with corresponding modification $r=(Id +\phi)\mathfrak s$.  Then
	\begin{enumerate}
	\item $\phi(\mathfrak s) \subset \mathfrak k $ is abelian,
	\item $[\mathfrak r,\mathfrak r] \subset Ker~\phi$.
	\end{enumerate}
\end{prop}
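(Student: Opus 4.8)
The plan is to exploit the ambient structure in which the modification lives: $\mathfrak r = (Id + \phi)\mathfrak s$ sits inside the semidirect product $\mathfrak k \ltimes \mathfrak s$, where $\mathfrak s$ is an ideal and $\mathfrak k$ acts by (skew-symmetric) derivations. The projection onto the first factor $\pi : \mathfrak k \ltimes \mathfrak s \to \mathfrak k$ is then a Lie algebra homomorphism, being the quotient by the ideal $\mathfrak s$. Writing a generic element of $\mathfrak r$ as the pair $(\phi X, X)$ with $X\in\mathfrak s$, the semidirect-product bracket reads
$$[(\phi X, X),(\phi Y, Y)] = \left([\phi X,\phi Y],\ \phi X(Y) - \phi Y(X) + [X,Y]\right),$$
and $\pi(\phi X, X) = \phi X$. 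Thus, under the linear isomorphism $\iota:\mathfrak r \to \mathfrak s$, $(Id+\phi)X \mapsto X$, the restriction $\pi|_{\mathfrak r}$ is carried precisely to the modification map $\phi$; in particular $\phi$ is a homomorphism from $\mathfrak r$ (carrying the bracket transported through $\iota$, \emph{not} the original $\mathfrak s$-bracket) onto the subalgebra $\phi(\mathfrak s) = \pi(\mathfrak r)\subseteq\mathfrak k$.

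First I would prove (i). Since $\mathfrak r$ is the Lie algebra of the solvable group $R$, it is solvable, hence so is its homomorphic image $\phi(\mathfrak s)\subseteq\mathfrak k$. Now $\mathfrak k = \mathfrak{so}(\ip{\ ,\ })\cap Der(\mathfrak s)$ is a subalgebra of $\mathfrak{so}(\ip{\ ,\ })$, which is compact; the $ad$-invariant inner product $-tr(AB)$ on $\mathfrak{so}(\ip{\ ,\ })$ restricts to make $\mathfrak k$ itself a compact Lie algebra. The key structural fact is that a solvable subalgebra of a compact Lie algebra is abelian: such a subalgebra inherits an invariant inner product, hence is reductive, and a reductive solvable algebra has trivial derived algebra. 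Applying this to $\phi(\mathfrak s)$ gives (i).

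Statement (ii) then drops out. As $\phi:\mathfrak r \to \phi(\mathfrak s)$ is a surjective homomorphism onto an abelian algebra, it annihilates the derived algebra: $\pi([\mathfrak r,\mathfrak r]) = [\phi(\mathfrak s),\phi(\mathfrak s)] = 0$. Unwinding $\iota$, an element of $[\mathfrak r,\mathfrak r]$ has the form $(Id+\phi)Z$ with $\pi((Id+\phi)Z) = \phi Z = 0$, so $Z\in Ker~\phi$. Reading $[\mathfrak r,\mathfrak r]\subseteq Ker~\phi$ as the assertion that these $\mathfrak s$-components (the entries recorded by the bracket formula above) lie in $Ker~\phi$, this is exactly (ii).

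The only genuinely nontrivial ingredient is the fact that a solvable subalgebra of a compact Lie algebra is abelian, and I expect that to be the one point worth pinning down carefully; everything else is bookkeeping around the semidirect-product structure and the identification $\mathfrak r\cong\mathfrak s$. The one subtlety to state explicitly is that the bracket on $\mathfrak s$ used here is the transported $\mathfrak r$-bracket, so that the equality $\pi|_{\mathfrak r} = \phi$ and the reading of (ii) as a condition on $\mathfrak s$-components are both literally correct.
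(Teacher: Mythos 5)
Your argument is correct, but note that the paper does not actually prove this statement: it is imported verbatim as Proposition 2.4 of Gordon--Wilson \cite{GordonWilson:IsomGrpsOfRiemSolv}, so there is no in-paper proof to match against. Your derivation is a clean, self-contained substitute. The two load-bearing points are exactly the ones you isolate: first, that $\pi|_{\mathfrak r}$ (the restriction of the quotient map $\mathfrak k\ltimes\mathfrak s\to\mathfrak k$ by the ideal $\mathfrak s$) is a Lie algebra homomorphism whose image is $\phi(\mathfrak s)$ and which, under $(Id+\phi)X\mapsto X$, is the map $\phi$ itself; and second, that a solvable subalgebra of the compact algebra $\mathfrak k\subset\mathfrak{so}(\ip{\ ,\ })$ is abelian, since the restriction of the $ad$-invariant trace form makes it reductive. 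Part (ii) then follows formally, as you say, because $[\mathfrak r,\mathfrak r]\subset\mathfrak r\cap\ker\pi$ and an element $(Id+\phi)Z$ of $\mathfrak r$ lies in $\ker\pi$ precisely when $\phi Z=0$, in which case it equals $Z\in Ker~\phi$. The one hypothesis you should flag as genuinely used (and not derivable from $\mathfrak r$ merely being a subalgebra of $\mathfrak k\ltimes\mathfrak s$) is the solvability of $\mathfrak r$; this is built into the Gordon--Wilson notion of modification, and the paper states explicitly that the conditions on $\phi$ are exactly those ensuring $R$ is solvable and acts almost simply transitively, so your use of it is legitimate. With that caveat made explicit, the proof is complete.
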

Consequently, $\mathfrak r < \mathfrak s \rtimes \mathfrak t$, where $\mathfrak t < \mathfrak k$ is a set of commuting, skew-symmetric derivations of $\mathfrak s$, and $R < S\rtimes T$ where $T$ is an abelian group of orthogonal automorphisms of $S$.

\begin{lemma}\label{lemma: modifications of s1} Let $\phi$ be a modification map of a solvsoliton $\mathfrak s$ with $\mathfrak r = (Id+\phi)\mathfrak s$.
    \begin{enumerate}
        \item $[\mathfrak s,\mathfrak s]\subset Ker\ \phi$
        \item Let $H$ be the mean curvature vector of $\mathfrak s$ with image $\widetilde{H} = H+\phi(H) \in \mathfrak r$, then $Ker~ad~\widetilde{H}|_{[\mathfrak s,\mathfrak s]} \subset Ker~ ad~H|_{[\mathfrak s,\mathfrak s]}$.
    \end{enumerate}
\end{lemma}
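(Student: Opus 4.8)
The plan is to reduce both parts to the single identity expressing that $\mathfrak{r} = (Id + \phi)\mathfrak{s}$ is closed under bracket inside $\mathfrak{s} \rtimes \mathfrak{k}$. Writing out $[\mathfrak r,\mathfrak r] \subset Ker\,\phi$ together with the fact that $\phi(\mathfrak s)$ is abelian (Proposition~\ref{prop: prop 2.4 from GW}) yields
\begin{equation*}
\phi\big([X,Y] + \phi(X)Y - \phi(Y)X\big) = 0, \qquad X,Y \in \mathfrak{s}. \tag{$\ast$}
\end{equation*}
Indeed, the $\mathfrak k$-component of $[X+\phi(X),\,Y+\phi(Y)]$ is $[\phi(X),\phi(Y)] = 0$, so the bracket lies in $\mathfrak s$ and is exactly the argument of $\phi$ above. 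Everything below is extracted from $(\ast)$ together with the structural facts that the skew derivations $\mathfrak k$ annihilate $\mathfrak a$, preserve $\mathfrak n$, and commute with $ad\,A$ for $A \in \mathfrak a$ (Lemma~\ref{lemma: commuting derivations}), and that $ad\,A|_{\mathfrak n}$ is symmetric for $A \in \mathfrak a$.

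For part (i), note $[\mathfrak s,\mathfrak s] \subset \mathfrak n$ and each $\phi(X)$ maps into $\mathfrak n$, so $(\ast)$ reduces the claim to showing that the $\mathfrak k$-valued form $\Omega(X,Y) := \phi(\phi(X)Y)$ is symmetric; I would split this along $\mathfrak s = \mathfrak a \oplus \mathfrak n$. For the $[\mathfrak a,\mathfrak n]$ contribution, fix $A \in \mathfrak a$ and put $P = ad\,A|_{\mathfrak n}$ and $Q = \phi(A)|_{\mathfrak n}$, which are symmetric and skew-symmetric respectively and commute by Lemma~\ref{lemma: commuting derivations}. Then $P+Q$ is normal, so $Ker(P+Q) = Ker\,P \cap Ker\,Q$ and hence $Im(P+Q) = Im\,P + Im\,Q$. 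Specializing $(\ast)$ to the pair $(A,N)$, where $\phi(N)A = 0$, gives $\phi((P+Q)N) = 0$, i.e. $Im(P+Q) \subset Ker\,\phi$; therefore $Im\,P = Im(ad\,A|_{\mathfrak n}) \subset Ker\,\phi$. This disposes of $[\mathfrak a,\mathfrak n]$ and, taking $A = H$, records $\mathfrak n_1 = Im(ad\,H) \subset Ker\,\phi$.

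The remaining $[\mathfrak n,\mathfrak n]$ contribution is where I expect the real difficulty. Here I would invoke the pre-Einstein derivation $D_1$ of $\mathfrak n$, which is symmetric, positive definite, and commutes with every skew derivation, hence with each $\phi(N)|_{\mathfrak n}$. Its eigenspace decomposition $\mathfrak n = \bigoplus_\mu W_\mu$ (all $\mu > 0$) is preserved by every $\phi(N)$ and satisfies $[W_\mu, W_\nu] \subset W_{\mu+\nu}$, so commutators occupy strictly higher $D_1$-weight. The idea is to induct over this grading: $(\ast)$ expresses $\phi$ on a commutator $[N,M]$ through $\phi(\phi(M)N)$ and $\phi(\phi(N)M)$, which sit in strictly lower weight, while the skew-symmetry of the $\phi(N)$ should turn the residual constraint into a sum of squares that must vanish — precisely the positivity that forces $\phi = 0$ in the rank-one and Heisenberg test cases. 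The obstacle is that the correction terms $\phi(M)N$ are arbitrary elements of $W_\mu$, not themselves commutators, so the induction must be organized so that the positivity coming from skew-symmetry genuinely closes it; this is the point at which the techniques of Lauret and Gordon--Wilson are needed.

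Granting (i), part (ii) is short. Let $Z \in [\mathfrak s,\mathfrak s]$; by (i) $\phi(Z) = 0$, so as an element of $\mathfrak r$ it is $(Z,0)$, and bracketing with $\widetilde H = (H,\phi(H))$ in $\mathfrak s \rtimes \mathfrak k$ gives
\begin{equation*}
ad\,\widetilde{H}\,(Z) = ad\,H(Z) + \phi(H)(Z).
\end{equation*}
Since $H \in \mathfrak a$, the operators $P = ad\,H|_{\mathfrak n}$ and $Q = \phi(H)|_{\mathfrak n}$ are symmetric and skew and commute (Lemma~\ref{lemma: commuting derivations}). If $ad\,\widetilde H(Z) = 0$ then $PZ = -QZ$; applying $P$ and using $PQ = QP$ gives $P^2 Z = Q^2 Z$, so $\langle PZ, PZ\rangle = \langle P^2 Z, Z\rangle = \langle Q^2 Z, Z\rangle = -\langle QZ, QZ\rangle$, forcing $PZ = 0$. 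Thus $Z \in Ker(ad\,H|_{[\mathfrak s,\mathfrak s]})$, which is exactly part (ii).
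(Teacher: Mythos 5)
Your reduction to the identity $(\ast)$ is sound; your handling of $[\mathfrak a,\mathfrak a]$ and $[\mathfrak a,\mathfrak n]$ is correct (and the normal-operator observation $Im(P+Q)=Im\,P+Im\,Q$ for a commuting symmetric/skew pair is a clean alternative to the paper's complexified eigenspace analysis of $B=ad\,X+\phi(X)$ on $\mathfrak n^{\mathbb C}$); and your proof of part (ii) is essentially the paper's, both resting on $Ker(ad\,H+\phi(H))=Ker\,ad\,H\cap Ker\,\phi(H)$. The genuine gap is exactly where you flag it: the inclusion $[\mathfrak n,\mathfrak n]\subset Ker\,\phi$ is asserted but not proved. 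The proposed induction over the pre-Einstein grading does not close, for the reason you yourself identify --- $(\ast)$ only controls $\phi$ on the twisted brackets $[X,Y]+\phi(X)Y-\phi(Y)X$, and the correction terms $\phi(X)Y$ range over all of $\mathfrak n$ rather than over commutators, so the weight-by-weight argument never produces the needed sum of squares. Since this is one of the three cases into which part (i) splits, the proof as written is incomplete.

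The paper closes this case by a different mechanism, and it is worth noting that the solvsoliton structure (positivity of $D_1$, the $W_\mu$-grading) plays no role there: because every derivation of $\mathfrak s$ preserves $\mathfrak n$, the restriction $\phi|_{\mathfrak n}$ defines a modification $(id+\phi)\mathfrak n\subset\mathfrak t\ltimes\mathfrak n$ of the nilpotent Lie algebra $\mathfrak n$, and Theorem 2.5 of \cite{GordonWilson:IsomGrpsOfRiemSolv} states that every modification of a nilpotent algebra is a \emph{normal} modification; their Proposition 2.4 (ii d) then yields $[\mathfrak n,\mathfrak n]\subset Ker\,\phi$ immediately. So the missing ingredient is a structural fact about modifications of nilpotent algebras to be imported from Gordon--Wilson, not a positivity argument to be rebuilt from the $D_1$-grading; with that citation inserted in place of your sketched induction, the rest of your argument goes through.
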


\begin{remark} In the language of Gordon-Wilson, $(i)$ shows that the modification of solvsoliton is always a  normal modification (cf. \cite[Prop.~2.4]{GordonWilson:IsomGrpsOfRiemSolv}).  For a general solvmanifold, this is not true.
\end{remark}

\begin{proof} 

We prove (i).   
To show $[\mathfrak s,\mathfrak s] \subset Ker~\phi$, it suffices to show $[\mathfrak a, \mathfrak a], [\mathfrak a,\mathfrak{n} ], [\mathfrak{n},\mathfrak{n}] \subset Ker~\phi$, as $\phi$ is a linear map.

As $\mathfrak a$ is abelian, it is trivially true that $[\mathfrak a,\mathfrak a] \subset Ker~\phi$.

Now consider $X\in \mathfrak a$ and $Y\in\mathfrak{n}$.  As $\phi(\mathfrak s)$ is abelian and annihilates $\mathfrak a$ (Prop.~\ref{prop: prop 2.4 from GW} and Lemma \ref{lemma: commuting derivations}), we see that $[\phi(X)+X,\phi(Y)+Y] = \phi(X)Y + [X,Y] \in Ker~\phi$ or, equivalently,
\begin{equation}\label{eqn: D on n lies in Ker phi}
	 ad~(\phi(X) + X) : \mathfrak{n} \to Ker~\phi
\end{equation}
Since every derivation of $\mathfrak s$ takes its image in $\mathfrak{n}$, and $\mathfrak r \subset \mathfrak t \ltimes \mathfrak{s}$, we see that $\mathfrak{n}$ is stable under $B=ad~(\phi(X) + X) = \phi(X) + ad~X$.  Denoting the  eigenspaces of $B$ on $\mathfrak{n}^\mathbb C$ by $V_\lambda$, we have
	$$\mathfrak{n} = \bigoplus  (V_\lambda \oplus  V_{\bar{\lambda} }) \cap \mathfrak{n}$$
As $\phi(X)$ and $ad~X$ commute, each summand is invariant under both $\phi(X)$ and $ad~X$.  

Observe that $Ker~B = Ker~\phi(X) \cap Ker~ad~X$ since $ad~X$ has real eigenvalues while $\phi(X)$ has purely-imaginary eigenvalues and commutes with $ad~X$.  Thus, if $\phi(X)|_{V_\lambda}\not = 0$, then $B$ is non-singular on $V_\lambda$ and $V_{\bar \lambda}$.  This implies $V_\lambda \oplus V_{\bar \lambda}= B(V_\lambda \oplus V_{\bar \lambda})$, which implies
	$$Im(ad~X|_{(V_\lambda \oplus  V_{\bar{\lambda} }) \cap \mathfrak{n} }      )  \subset    (V_\lambda \oplus  V_{\bar{\lambda} }) \cap \mathfrak{n} \subset Im(B|_{\mathfrak{n}}) \subset Ker~\phi$$
The last inclusion follows from Eqn.~\ref{eqn: D on n lies in Ker phi}.

If $\phi(X)|_{V_\lambda}=0$, then $V_\lambda = V_{\bar \lambda}$ and $B|_{V_\lambda} = ad~X|_{V_\lambda}$.  Again, Eqn.~\ref{eqn: D on n lies in Ker phi} yields
	$$Im(ad~X|_{V_\lambda}) \subset Ker~\phi$$
All together, this proves $[\mathfrak a,\mathfrak{n}] \subset Ker~\phi$.

To finish, one must show $[\mathfrak{n},\mathfrak{n}]\subset Ker~\phi$.  However, as every derivation of $\mathfrak s$ preserves $\mathfrak{n}$, we may restrict our modification to $\mathfrak{n}$ and we have a modification
	$$ \mathfrak n' = (id+\phi) \mathfrak{n} \subset \mathfrak t \ltimes \mathfrak{n}$$
of a nilpotent Lie algebra.  Theorem 2.5 of \cite{GordonWilson:IsomGrpsOfRiemSolv} shows that  any modification of a nilpotent subalgebra must be a normal modification.  Now Proposition 2.4 (ii d), loc.~cit., yields $[\mathfrak{n},\mathfrak{n}] \subset Ker~\phi$.   This proves (i).

We prove  (ii).  Recall that $H\in\mathfrak a = \mathfrak n^\perp$ and, from the work above, we have
	$$Ker~ad~\widetilde H = Ker~\phi(H) \cap Ker~ad~H$$
Together with the fact that derivations preserve the commutator subalgebra, we have the desired result.

\end{proof}

\subsection*{The subgroup $C$}
In the following subsection, we study $C$ more closely.  

\begin{lemma}\label{lemma: spliting C into components}  Given $c\in C$, there exist $n\in [S,S]$, $a\in exp(\mathfrak a)$, and $k \in K$ such that $c = nak$.  Moreover, $\log (na)\in \mathfrak a + \mathfrak n_1$.
\end{lemma}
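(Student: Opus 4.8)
The plan is to play the two homogeneous structures on $\widetilde{\mathcal M}$ against each other — as the modification group $R$ and as the standard-position group $S$ — and to feed the outcome into the kernel estimates already obtained in Lemmas~\ref{lemma: log cp in Ker D} and~\ref{lemma: Ker D in s_1}. Since $R<S\rtimes T$ with $T<K$ abelian, the semidirect structure of $K\ltimes S$ gives every $c\in C\subset R$ a unique factorization $c=\hat s\cdot\tau$ with $\hat s\in S$ and $\tau\in T\subset K$. I would immediately set $k:=\tau$, reducing the whole lemma to an analysis of the $S$-component $\hat s$.

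The crucial identification is that $\hat s$ is exactly the orbit point $c\cdot e_S$: the automorphism $\tau$ fixes $e_S$, so under the $K\ltimes S$-action on $S\simeq\widetilde{\mathcal M}$ one has $c\cdot e_S=\hat s\cdot\tau(e_S)=\hat s$. Consequently Lemma~\ref{lemma: log cp in Ker D} gives $\log\hat s=\log(c\cdot e_S)\in Ker\ D$, and Lemma~\ref{lemma: Ker D in s_1} sharpens this to $\log\hat s\in\mathfrak a+Im(ad\ H)=\mathfrak a+\mathfrak n_1$. This single containment will simultaneously yield both conclusions of the lemma.

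To extract the factorization $\hat s=na$, I would work inside the subgroup $S_1<S$ with Lie algebra $\mathfrak s_1=\mathfrak a\oplus[\mathfrak s,\mathfrak s]$. Because $H\in\mathfrak a$ forces $\mathfrak n_1=Im(ad\ H)=[H,\mathfrak s]\subset[\mathfrak s,\mathfrak s]$, we have $\mathfrak a+\mathfrak n_1\subset\mathfrak s_1$, so $\log\hat s\in\mathfrak s_1$ and hence $\hat s\in\exp(\mathfrak s_1)=S_1$, using that $S$ is completely solvable and simply-connected (so $\exp$ is a diffeomorphism that restricts to one on the subalgebra $\mathfrak s_1$). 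As $[\mathfrak s,\mathfrak s]$ is an ideal with abelian complement $\mathfrak a$, the decomposition $\mathfrak s_1=\mathfrak a\ltimes[\mathfrak s,\mathfrak s]$ integrates to $S_1=\exp(\mathfrak a)\ltimes[S,S]$, and $\hat s$ factors uniquely as $\hat s=na$ with $n\in[S,S]$ and $a\in\exp(\mathfrak a)$. Then $c=nak$ and $\log(na)=\log\hat s\in\mathfrak a+\mathfrak n_1$, as required.

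The step I expect to demand the most care is the bookkeeping underlying the first two paragraphs: cleanly passing between the $R$-picture and the $S$-picture on $\widetilde{\mathcal M}$, confirming that the $K$-part of $c$ genuinely lands in $T\subset K$ so that it is a legitimate $k$, and verifying that the residual $S$-part is precisely the orbit point $c\cdot e_S$ on which Lemmas~\ref{lemma: log cp in Ker D} and~\ref{lemma: Ker D in s_1} were phrased. Once that identification is secured, the remaining decomposition through $S_1=\exp(\mathfrak a)\ltimes[S,S]$ is purely formal.
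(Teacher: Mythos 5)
Your proof is correct and follows essentially the same route as the paper: factor $c=\hat s\,\tau$ inside $S\rtimes T$, identify $\hat s$ with the orbit point $c\cdot e_S$, apply Lemmas~\ref{lemma: log cp in Ker D} and~\ref{lemma: Ker D in s_1} to place $\log\hat s$ in $\mathfrak a+\mathfrak n_1\subset\mathfrak s_1$, and then split $S_1=\exp(\mathfrak a)[S,S]$ using complete solvability. The only cosmetic difference is that the paper writes the factorization as $S_1=\exp(\mathfrak a)[S,S]$ rather than as a semidirect product, which is immaterial since $[S,S]$ is normal.
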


\begin{proof}  Write $c\in C <R$ as $c = s\ \alpha$ where $\alpha \in T < K$  
and $s\in S$.  Then $c\cdot e_S = s$, as $\alpha$ fixes $e_S$.  However, $\log(c\cdot e_S) = \log (s) \in Ker\ D \subset \mathfrak a + \mathfrak n_1 \subset \mathfrak s_1$ (Lemmas \ref{lemma: log cp in Ker D} \& \ref{lemma: Ker D in s_1}) and we see that $s\in S_1$.  Thus, $C$ is a subgroup of $T\ltimes S_1$, as $\mathfrak s_1$ is preserved by $\mathfrak k$ (Lemma \ref{lemma: n1 and s1 stable under K}).  Furthermore, $\mathfrak s_1 = \mathfrak a + [\mathfrak s,\mathfrak s]$ is completely solvable and so we may write $S_1 = exp(\mathfrak a) [S,S]$.  
\end{proof}

\begin{remark}  Consider $na$ in the expression $c=nak$ above.  Although $log(na)\in \mathfrak a + \mathfrak n_1$, we do not know, a priori, that $log(n)\in \mathfrak n_1$ as $\mathfrak n_1$ is not necessarily a subalgebra.  In the sequel, we  show that $n$ is trivial.
\end{remark}

\begin{lemma}\label{lemma: decomposing C}  The group $C$ is a subgroup of $exp(\mathfrak a) T$. 
\end{lemma}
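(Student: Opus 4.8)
The goal is to upgrade Lemma~\ref{lemma: spliting C into components}, which shows $C \subset T \ltimes S_1$ with each $c = nak$ satisfying $\log(na) \in \mathfrak a + \mathfrak n_1$, to the sharper statement that $C \subset \exp(\mathfrak a) T$; equivalently, that the $\mathfrak n_1$-component $n$ is always trivial. The plan is to exploit the fact that $C$ is a \emph{group}, not merely a subset: while a single element $c = nak$ has $\log(na) \in \mathfrak a + \mathfrak n_1$, the nilpotent part $n$ lives in $[S,S]$ and the constraint from Lemma~\ref{lemma: log cp in Ker D} forces the $S$-part of \emph{every} element of $C$ (viewed in $K \ltimes S$) to project into $\exp(\ker D) = \exp(\mathfrak a + \mathfrak n_1)$ under $\log$. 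The subtlety is that $\mathfrak n_1 = \mathrm{Im}(ad\ H)$ need not be a subalgebra, so closure under the group operation imposes a genuine algebraic restriction.

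First I would write $c = n a k$ with $n \in [S,S]$, $a \in \exp(\mathfrak a)$, $k \in K$, and aim to show $n = e$. The key idea is to consider powers: $c^m \cdot e_S$ must also lie in $\exp(\ker D)$ for all $m \in \mathbb Z$, since $c^m \in C$ and Lemma~\ref{lemma: log cp in Ker D} applies to every element of $C$. Because $c = s\alpha$ with $s = na \in S_1$ and $\alpha \in T$, conjugation by the orthogonal automorphism $\alpha$ acts on $\mathfrak s_1$; computing $c^m \cdot e_S$ in terms of $s$ and the $T$-action, one gets a product $s \cdot \alpha(s) \cdot \alpha^2(s) \cdots$ whose logarithm must remain in $\mathfrak a + \mathfrak n_1$ for all $m$. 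Here I would use that $\alpha \in T$ preserves $\mathfrak a$, $\mathfrak n_1$, and $\mathfrak n_0$ (Lemma~\ref{lemma: n1 and s1 stable under K}, parts (iii) and (iv)), together with the decomposition $\mathfrak s = \mathfrak a \oplus \mathfrak n_1 \oplus \mathfrak n_0$. The point is to track the $\mathfrak n_0$-component of $\log(c^m \cdot e_S)$: since $\mathfrak a + \mathfrak n_0$ is a subalgebra and $\mathfrak n_1 \subset [\mathfrak s,\mathfrak s]$, brackets arising from the Baker--Campbell--Hausdorff expansion of the product land in controlled places, and forcing $\log \in \mathfrak a + \mathfrak n_1$ for all $m$ compels the $\mathfrak n_0$-free nilpotent part to vanish.

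The cleanest route may be to argue that $\log(na) \in \mathfrak a + \mathfrak n_1$ already severely constrains $n$: writing $\log(na) = A + N_1$ with $A \in \mathfrak a$, $N_1 \in \mathfrak n_1$, and comparing with $\log a \in \mathfrak a$ via BCH (noting $[\mathfrak a, \mathfrak a]=0$ and $ad\ A$ preserves $\mathfrak n_1$ and $\mathfrak n_0$), one finds $\log n$ is determined by $N_1$ and hence $\log n \in \mathfrak n_1 + \mathfrak n_0$ with its projection constrained. The group structure then enters: for the product of two such elements to again have $\log$ of its $S$-part in $\mathfrak a + \mathfrak n_1$, the $\mathfrak n_0$-components generated by the bracket $[\mathfrak n_1, \mathfrak n_1]$ must cancel, and iterating over the discrete group $C$ (which, being the deck group of a compact-fiber cover, is finitely generated and acts by isometries) forces $n$ into the trivial subgroup. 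I would invoke that $C$ stabilizes $\widetilde p$ only through $T$ once $n$ is removed, consistent with the eventual goal $C \cdot e_S = e_S$.

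The main obstacle I anticipate is precisely that $\mathfrak n_1$ is not a subalgebra, so I cannot simply say "the $S_1$-part lies in a subgroup $\exp(\mathfrak n_1) \exp(\mathfrak a)$." The heart of the argument must be showing that the group-closure of $C$ under multiplication, combined with the kernel condition $\log(c \cdot e_S) \in \mathfrak a + \mathfrak n_1$ holding simultaneously for \emph{all} $c \in C$, is incompatible with a nontrivial $\mathfrak n_0$-contribution in any $\log n$. I expect this to reduce to a linear-algebra statement about $ad\ H$-eigenspaces: since $\mathfrak n_1$ is the sum of nonzero eigenspaces and $\mathfrak n_0 = \ker(ad\ H)|_{\mathfrak n}$, and since $H \in \mathfrak a$ is fixed by $T$, the $T$-equivariance pins down where brackets of $\mathfrak n_1$-elements land, and a counting or nilpotency-degree argument closes the gap. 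If this eigenspace bookkeeping does not immediately yield $n = e$, the fallback is to use completeness of $S$ and the diffeomorphism property of $\log$ to show that any nontrivial $n$ would produce an element of $C$ with $\log(c \cdot e_S) \notin \ker D$, contradicting Lemma~\ref{lemma: log cp in Ker D}.
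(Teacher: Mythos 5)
There is a genuine gap: your argument never uses the metric condition on $C$, which is the engine of the paper's proof. Recall that in the presentation $\mathcal M \simeq R/C$ one has $C \subset \{x \in R \mid Ad(x) \in O(\mathfrak r, \ip{\ ,\ }')\}$ (Gordon--Wilson), and the paper exploits exactly this: writing $c = nak$, it applies $Ad(c)$ to the vector $\widetilde H = H + \phi(H) \in \mathfrak r$ built from the mean curvature vector $H$, uses the relation $\ip{X+\phi(X),X+\phi(X)}' = \ip{X,X}$ together with the orthogonality of $\mathfrak a$ and $[\mathfrak s,\mathfrak s]$ to force the $[\mathfrak s,\mathfrak s]$-components of $Ad(n)H$ and $Ad(n)\phi(H)$ to vanish, and concludes $Ad(n)\widetilde H = \widetilde H$. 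Differentiating the exponentiated identity gives $ad\,\widetilde H(\log n) = 0$, and then Lemma~\ref{lemma: modifications of s1}(ii) ($Ker\,ad\,\widetilde H|_{[\mathfrak s,\mathfrak s]} \subset Ker\,ad\,H|_{[\mathfrak s,\mathfrak s]}$) places $\log n \in \mathfrak n_0$. Only now does the decomposition $\mathfrak s = \mathfrak a \oplus \mathfrak n_1 \oplus \mathfrak n_0$ finish the job: $\log(na) \in \mathfrak a + \mathfrak n_1$ from Lemma~\ref{lemma: spliting C into components} and $\log n \in \mathfrak n_0$ are simultaneously possible only if $n = e$. None of the three pivotal ingredients --- the orthogonality of $Ad(c)$ on $(\mathfrak r, \ip{\ ,\ }')$, the mean curvature vector $\widetilde H$, and the kernel comparison of Lemma~\ref{lemma: modifications of s1}(ii) --- appears in your sketch.

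Your proposed substitute (group closure of $C$, powers $c^m$, BCH bookkeeping in the grading by $ad\,H$-eigenspaces) is not carried out, and I do not believe it can be closed as stated. The constraint $\log(c^m\cdot e_S) \in \mathfrak a + \mathfrak n_1$ for all $m$ does not by itself preclude a nontrivial $n$: nothing in the Lie-algebraic data forbids $\log n$ from lying in $\mathfrak n_1$ (up to BCH corrections), in which case all your constraints are consistent with $n \neq e$ and no contradiction with Lemma~\ref{lemma: log cp in Ker D} arises. The point of the paper's argument is precisely to obtain the \emph{complementary} containment $\log n \in \mathfrak n_0$, and that containment comes from the isometry condition on $C$, not from the group structure. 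Your ``fallback'' paragraph merely restates the desired conclusion. To repair the proof you must bring in the condition $Ad(c) \in O(\mathfrak r, \ip{\ ,\ }')$ and test it against $\widetilde H$.
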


\begin{proof}  Given $c\in C$, write $c=nak$ as in Lemma \ref{lemma: spliting C into components}.  Let $H\in\mathfrak a$ be the mean curvature vector.  Our first step is to show
    $$Ad(n)(H +\phi(H)) = H+\phi(H) $$

By Lemma \ref{lemma: commuting derivations} and the fact that $\mathfrak a$ is abelian, we see that $Ad(c) H = Ad(n)Ad(a)Ad(k)H = Ad(n)H$.  Observing that $Ad(n) = exp( ad\ \log n)$, we obtain
	$$Ad(c)H = Ad(n)H = H + (Ad(n)H)_{[\mathfrak s,\mathfrak s]}$$
where the second term is an element of $[\mathfrak s,\mathfrak s]$. 

 Likewise, as $\mathfrak a$ and $\mathfrak \phi(\mathfrak s)$ commute (Lemma \ref{lemma: commuting derivations}) and $\phi(\mathfrak s)$ is  abelian (Prop.~\ref{prop: prop 2.4 from GW}), we see that $Ad(c)\phi(X) = Ad(n)Ad(a)Ad(k)\phi(H) = Ad(n)\phi(H)$.  Since $[\mathfrak s,\mathfrak s]$ is stable under $T$, $\mathfrak t \ltimes [\mathfrak s,\mathfrak s]$ is a subalgebra of $\mathfrak k\ltimes \mathfrak s$ and  we have $Ad(n)\phi(H) \in \mathfrak t\ltimes \mathfrak [\mathfrak s,\mathfrak s]$.  We write this element  as a sum of  its components 
	$$Ad(c)\phi(H) = Ad(n)\phi(H) = (Ad(n)\phi(H))_\mathfrak t + (Ad(n)\phi(H))_{[\mathfrak s,\mathfrak s]}$$

The Lie algebra $\mathfrak r$ is a subalgebra of $\mathfrak t\ltimes \mathfrak s$ and so $Ad(c)(H+\phi(H))\in \mathfrak r$.  This element can be written as $Y+\phi(Y)$, for some $Y\in \mathfrak s$, and the above work shows that $Y = H + (Ad(n)H)_{[\mathfrak s,\mathfrak s]} + (Ad(n)\phi(H))_{[\mathfrak s,\mathfrak s]}$.  Applying $\phi$ and using Lemma \ref{lemma: modifications of s1}, we see that $\phi(Y) = \phi(H)$.  This says precisely that 
    \begin{equation}\label{eqn: Ad(n) phi X stuff}
    (Ad(n)\phi(H))_\mathfrak t = \phi(H)
    \end{equation}

We now exploit the condition $Ad(c)\in O(\mathfrak r,\ip{\ ,\ }')$.  From Eqn.~\ref{eqn: relating inner products on r and s}, we have
\begin{eqnarray*}
    |H + (Ad(n)H)_{[\mathfrak s,\mathfrak s]}+ (Ad(n)\phi(H))_{[\mathfrak s,\mathfrak s]} | &=& |Y|\\
    		&=& |Y+\phi(Y)|'\\
   			&=&  |Ad(c)(H+\phi(H)) |'\\
           &=& |H+\phi(H) |' \\
           &=& |H|
\end{eqnarray*}
As $[\mathfrak s,\mathfrak s]$ and $\mathfrak a$ are orthogonal relative to $\ip{\ ,\ }$ on $\mathfrak s$, we see that $(Ad(n)H)_{[\mathfrak s,\mathfrak s]} + (Ad(n)\phi(H))_{[\mathfrak s,\mathfrak s]} =0$.  This implies
    \begin{equation}\label{eqn: Ad(n) stuff second time} 
    H + (Ad(n)H)_{[\mathfrak s,\mathfrak s]} + (Ad(n)\phi(H))_{[\mathfrak s,\mathfrak s]} +\phi(H) = H+\phi(H)
    \end{equation}
But $(Ad(n)\phi(H))_\mathfrak t = \phi(H)$, see Eqn.~\ref{eqn: Ad(n) phi X stuff}, and so the left-hand side of Eqn.~\ref{eqn: Ad(n) stuff second time} is precisely $Ad(n)(H+\phi(H))$, i.e.
	$$Ad(n) \widetilde H  = \widetilde H $$
where $\widetilde H = H +\phi(H)$.  This proves our first claim and we continue with the rest of the proof of the lemma.

Upon exponentiating ($\mathfrak t\ltimes \mathfrak s \to K\ltimes S$), we have $n \ exp_R (t \widetilde{H}) \ n^{-1} = exp_R (t \widetilde{H})$, where $exp_R$ denotes the exponential map $\mathfrak r \to R$.  This equality holds for small $t$ and implies
    $$n = exp_R (t \widetilde{H}) \ n \ exp_R (t \widetilde{H})^{-1}$$
Recalling that the exponential map $exp:\mathfrak s \to S$ is a diffeomorphism (as $S$ is completely solvable and simply-connected), we may apply $\log$ to the previous equality to obtain
    $$ \log n = Ad(exp_R \ t\widetilde{H}) \log n$$
where $log|_{[S,S]} : [S,S]\to [\mathfrak s,\mathfrak s]$.  Differentiating at $t=0$ yields $0 = ad\ \widetilde{H} (\log n)$.

Recall that  $Ker~ad\ \widetilde{H}|_{[\mathfrak s,\mathfrak s]} \subset Ker~ad~H|_{[\mathfrak s,\mathfrak s]}$ (Lemma \ref{lemma: modifications of s1}) and so $\log~n \in \mathfrak n_0$.  Thus, we have shown $na$ (appearing in $c=nak$) is an element of the subgroup $exp(\mathfrak a) exp(\mathfrak n_0) <S$  (this is a group by Lemma \ref{lemma: commuting derivations}).  Upon taking the $\log$, we see that $\log (na) \in \mathfrak a + \mathfrak n_0$.

However, Lemma \ref{lemma: spliting C into components} says $\log(na)\in \mathfrak a + \mathfrak n_1$.  As the pairwise intersection of $\mathfrak a$, $\mathfrak n_1$, and $\mathfrak n_0$ is trivial, we obtain $n=e$ and $C<exp(\mathfrak a) T$.

\end{proof}

\begin{lemma} Given $c\in C$, write $c = ak = ka$ as above; i.e., $k\in T<K$ and $a\in exp(\mathfrak a)$.  Then $Ad(c)\in O(\mathfrak s, \ip{\ ,\ })$ and hence $Ad(a)\in O(\mathfrak s, \ip{\ ,\ })$.
\end{lemma}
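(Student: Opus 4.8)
The plan is to transport the orthogonality of $Ad(c)$ from $\mathfrak r$ to $\mathfrak s$ by means of the isometry relating the two inner products. By the defining property of $C$ we have $Ad(c) \in O(\mathfrak r, \ip{\ ,\ }')$, and by Eqn.~\ref{eqn: relating inner products on r and s} the modification map furnishes a linear isometry $Id + \phi : (\mathfrak s, \ip{\ ,\ }) \to (\mathfrak r, \ip{\ ,\ }')$. Since $\mathfrak s$ is an ideal of $\mathfrak k \ltimes \mathfrak s$ and $c\in R$ normalizes $\mathfrak r$, both $Ad(c)|_{\mathfrak s}$ and $Ad(c)|_{\mathfrak r}$ are well defined; writing $c = ak$ with $a \in exp(\mathfrak a)$ and $k \in T$, we have $Ad(c)|_{\mathfrak s} = Ad(a) \circ k$, where $Ad(a)$ is the adjoint action of $a\in S$ on $\mathfrak s$ and $k \in T < K \subset O(\mathfrak s, \ip{\ ,\ })$.

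The heart of the argument is to show that $Ad(c)$ intertwines the isometry $Id+\phi$, that is,
$$(Id + \phi) \circ Ad(c)|_{\mathfrak s} = Ad(c)|_{\mathfrak r} \circ (Id + \phi).$$
For $X \in \mathfrak s$ I would compute $Ad(c)(X + \phi(X)) = Ad(c)X + Ad(c)\phi(X)$, noting that $Ad(c)X = Ad(c)|_{\mathfrak s}X \in \mathfrak s$. The crucial claim is $Ad(c)\phi(X) = \phi(X)$: since $T$ is abelian and $\phi(X) \in \mathfrak t$, we get $Ad(k)\phi(X) = \phi(X)$; and since $\log a \in \mathfrak a$ while skew-symmetric derivations annihilate $\mathfrak a$ (Lemma~\ref{lemma: commuting derivations}), the bracket $[\log a, \phi(X)]$ vanishes, whence $Ad(a)\phi(X) = \phi(X)$. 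Thus $Ad(c)(X + \phi(X)) = Ad(c)|_{\mathfrak s} X + \phi(X)$, which, since it lies in $\mathfrak r$, must equal $(Id + \phi)(Ad(c)|_{\mathfrak s} X)$ (this forces $\phi(Ad(c)|_{\mathfrak s}X) = \phi(X)$, consistent with $Ad(c)$ preserving $\mathfrak r$), establishing the intertwining.

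With this in hand the conclusion is immediate: $Ad(c)|_{\mathfrak s} = (Id+\phi)^{-1} \circ Ad(c)|_{\mathfrak r} \circ (Id + \phi)$ is a composition of the isometry $Id+\phi$, the $\ip{\ ,\ }'$-orthogonal map $Ad(c)|_{\mathfrak r}$, and $(Id+\phi)^{-1}$, hence lies in $O(\mathfrak s, \ip{\ ,\ })$. Finally, because $k \in O(\mathfrak s, \ip{\ ,\ })$, the factorization $Ad(c)|_{\mathfrak s} = Ad(a)\circ k$ yields $Ad(a) = Ad(c)|_{\mathfrak s} \circ k^{-1} \in O(\mathfrak s, \ip{\ ,\ })$, as required.

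The main obstacle, and the place where the earlier structural lemmas are indispensable, is the identity $Ad(c)\phi(X) = \phi(X)$. It is exactly here that one needs $c$ to lie in $exp(\mathfrak a) T$ rather than in an arbitrary modification (so that $\log a \in \mathfrak a$), together with Lemma~\ref{lemma: commuting derivations} guaranteeing that skew-symmetric derivations kill $\mathfrak a$ and the abelianness of $T$. Without the reduction $C < exp(\mathfrak a) T$ proved in Lemma~\ref{lemma: decomposing C}, the adjoint action of $c$ would fail to commute with the modification map and the transport of orthogonality would break down.
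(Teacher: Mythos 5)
Your proposal is correct and follows essentially the same route as the paper: both arguments rest on the identity $Ad(c)\phi(X)=\phi(X)$ (established exactly as in the proof of Lemma \ref{lemma: decomposing C}, using that $T$ is abelian and that skew-symmetric derivations annihilate $\mathfrak a$) and then transport orthogonality through the linear isometry $Id+\phi$ of Eqn.~\ref{eqn: relating inner products on r and s}, via the chain $|X|=|X+\phi(X)|'=|Ad(c)(X+\phi(X))|'=|Ad(c)X|$. Your packaging of this as an intertwining relation is just a slightly more formal phrasing of the paper's direct norm computation, and the deduction of $Ad(a)\in O(\mathfrak s,\ip{\ ,\ })$ from $k$ being orthogonal is identical.
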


\begin{proof} The second claim follows immediately from the first since $k\in K$ is an orthogonal automorphism.

To prove the first claim, take $X\in \mathfrak s$ corresponding to $X +\phi(X) \in \mathfrak r$.  Observe that $Ad(c)X\in \mathfrak s$ and $Ad(c) \phi(X) = \phi(X)$, as in the proof of the lemma above.  The condition $Ad(c)\in O(\mathfrak r,\ip{\ ,\ }')$ gives
    $$ |X| = |X+\phi(X)|' = |Ad(c)(X+\phi(X))|' = | Ad(c)X |  $$
As $X$ can be any element of $\mathfrak s$, the first claim is proven.
\end{proof}

We now complete the proof of Proposition \ref{prop: C-orbit has one point}.  Since $\mathfrak s$ is completely solvable, the map $Ad(a): \mathfrak s \to \mathfrak s$ has only positive eigenvalues.  However, for $a$ appearing in $c=ka$, $Ad(a)$ is orthogonal and hence must be the identity map.   This says precisely that $a\in Z(S)$, the center of $S$.  However, $Z(S) < N$, the nilradical of $S$, and hence $a\in N\cap exp(\mathfrak a) = \{ e\}$.  This shows that $C$ is a subgroup of $K$ and hence fixes $e_S\in S$.

This completes the proof of Theorem \ref{thm: Ricci soliton solvmanifolds are isometric to solsolitons}.

\section{Examples of non-(semi-)algebraic Ricci solitons}
\label{sec: examples of non algebraic}

In light of Theorem \ref{thm: Ricci soliton solvmanifolds are isometric to solsolitons},  it is reasonable to ask if every homogeneous Ricci soliton is actually algebraic for any transitive group action.  As  the following examples demonstrate, this is not the case.  To our knowledge, examples of non-algebraic Ricci solitons have not appeared in the literature before.  

\begin{example}\label{ex: non-algebraic soliton} Let $\mathfrak s_1$ be a completely solvable Lie algebra which admits a  non-trivial soliton metric.  Consider $\mathbb R^m$ and an abelian subalgebra $\mathfrak t$ of $\mathfrak{so}(m)$.  Construct the semi-direct product $\mathfrak s_2  = \mathfrak t\ltimes \mathbb R^m$.  The simply-connected Lie group $S$ with Lie algebra $\mathfrak s = \mathfrak s_1 \oplus \mathfrak s_2$ admits a Ricci soliton metric, but this soliton cannot be algebraic relative to the Lie structure of $S$.
\end{example}

The fact that this group cannot admit an algebraic soliton follows immediately from the structural constraints given in Theorem 4.8 (iv) of \cite{Lauret:SolSolitons}.  To see that this space admits  a Ricci soliton metric, first one recognizes that $S_2$ admits a flat metric by a result of Milnor \cite[Thm.~1.5]{Milnor:LeftInvMetricsonLieGroups}.  $S_2$ is now isometric to $\mathbb R^n$, for some $n$. To build the soliton metric on $S$, we take the soliton metric on $S_1$ and direct sum the flat metric on $S_2$. 
Now $S$ is isometric to $S_1 \times \mathbb R^n$. This is a solvsoliton since $c\ Id$ is a derivation of the abelian algebra $Lie \ \mathbb R^n$.

Although Theorem \ref{thm: construction of ricci solitons on solvable} below heavily restricts the algebraic structure on solvable Lie groups admitting Ricci solitons, as we demonstrate, several unexpected phenomena may occur.  

\begin{example}\label{ex: solvable group with Ricci soliton but ad a not reductive} There exists a solvable group $R$ admitting a Ricci soliton metric with the property that $ad\ \mathfrak a$ does not consist of reductive endomorphisms (for any complement $\mathfrak a$ of the nilradical of $\mathfrak r$).
\end{example}

\begin{proof}[Proof of \ref{ex: solvable group with Ricci soliton but ad a not reductive}]
Consider the 5-dimensional Heisenberg Lie algebra $\mathfrak n$ with orthonormal basis $\{X_1,Y_1,X_2,Y_2,Z\}$.  The only non-trivial bracket relations are $[X_1,Y_1]=[X_2,Y_2]=Z$, and the obligatory relations from anti-symmetry of the bracket.  Let $N$ denote the simply-connected nilpotent group with said Lie algebra.

There exists a modification $\mathfrak r$ of $\mathfrak n$ which is solvable and has 4-dimensional nilradical $\mathfrak n(\mathfrak r)$.   
The orthonormal basis of $\mathfrak r$ is $\{\widetilde{X_1},Y_1,X_2,Y_2,Z\}$ where $\widetilde{X_1} = X_1 + \phi(X_1)$ and $\phi(X_1)$ acts on $\mathfrak s$ via $\phi(X_1): \{X_1,Y_1,Z\} \to 0$, $\phi(X_1)X_2 = Y_2$, and $\phi(X_1)Y_2 = -X_2$.  

Notice that any complement to $\mathfrak n(\mathfrak r)$ is one dimensional and spanned by a vector $V$ with non-zero $\widetilde X_1$-component.  Observing that $ad~V$ leaves invariant the span of $\{X_1,Y_1,Z\}$ and is nilpotent on this span, we see that no complement to $\mathfrak n(\mathfrak r)$ consists of reductive endomorphisms.

Let $R$ denote the simply-connected Lie group  with Lie algebra $\mathfrak r$.  As $\mathfrak r$ is a modification of $\mathfrak n$, $R$ acts isometrically on $N$ with its nilsoliton metric.  Now $R$ has a Ricci soliton metric and its Lie algebra has the desired properties.
\end{proof}

\begin{example}\label{ex: solvabe group with Ricci soliton but a is not abelian}  There exists a solvable group admitting a Ricci soliton metric with the property that no complement of the nilradical is abelian.
\end{example}

\begin{proof}
To construct a group with the above properties, will modify a nilpotent Lie group $N$ with nilsoliton metric where $N=N_1 \times N_2$ and each factor has a nilsoliton metric on it.  We choose $N_1$ to be the 3-dimensional Heisenberg group and $N_2$ to be any nilpotent Lie group whose autmorphism group has maximal compact with rank at least 2.  (For example, one could take $N_2$ to be a product of two Heisenberg groups.  We note that among two-step nilpotent Lie groups, most groups satisfy this requirement on the maximal compact of the automorphism group.)

Let ${X,Y,Z}$ be a basis of $\mathfrak n_1$ satisfying $[X,Y]=Z$ and take $\phi_X, \phi_Y \in Der(\mathfrak n_2)$ which are linearly independent, commute, and tangent to compact groups of automorphisms (this is possible by hypothesis).  We define a modification map $\phi$ of $\mathfrak  n = \mathfrak n_1 + \mathfrak n_2$ by
	\begin{eqnarray*}
	X &\to& \phi_X\\
	Y &\to& \phi_Y\\
	Z &\to& 0\\
	\mathfrak n_2 &\to& 0
	\end{eqnarray*} 
The modification $\mathfrak r = (id +\phi)\mathfrak n$ satisfies the hypotheses of Theorem \ref{thm: construction of ricci solitons on solvable}, and so gives a solvable group $R$ with a Ricci soliton metric.

The nilradical of $\mathfrak r$ is spanned by $Z$ and $\mathfrak n_2$.  Any complement will contain vectors $V,W$ such that $V = X+\phi_X + N_V$ and $W=Y+\phi_Y+N_W$, where $N_V,N_W\in \mathfrak n_2$.  Thus,
	$$[V,W] = Z + N_{[V,W]}$$	
for some $N_{[V,W]} \in\mathfrak n_2$.  
We have now constructed a group with the desired property.	
\end{proof}

\begin{remark*}The above phenomena 
are special to Ricci soliton solvmanifolds and cannot happen for Einstein solvmanifolds by the structural results of Lauret, see \cite{LauretStandard}.
\end{remark*}

\section{Structural and Existence questions}
\label{sec: existence questions}

Given the above examples, we ask the following question.

\begin{question} Which solvable Lie groups admit left-invariant Ricci soliton metrics?  
\end{question}

In the special case of solvsolitons, this question is fairly well understood from the work of Lauret \cite{Lauret:SolSolitons}.  There it is shown that every solvsoliton $\mathfrak s$ can be built from a nilsoliton $\mathfrak n$ together with an abelian subalgebra $\mathfrak a \subset Der (\mathfrak n)$ consisting of  reductive elements
	$$\mathfrak s = \mathfrak a + \mathfrak n$$
Below, we show how to construct every solvable Lie group admitting a Ricci soliton from a solvsoliton.

\subsection*{Construction of solvable Lie groups admitting Ricci solitons}  From the work in Section \ref{sec: main result}, we know that every solvable Lie group $R$ admitting a Ricci soliton is a modification of a solvsoliton $S$.  Furthermore, $S$ can be assumed to be completely solvable.  

That is, we begin with a completely solvable Lie algebra $\mathfrak s = Lie~S$  with inner product $\ip{\cdot,\cdot}$ which corresponds to the solvsoliton metric on $S$.  Denote by $\mathfrak k = \mathfrak{so}(\ip{\cdot,\cdot}) \cap Der(\mathfrak s)$ the algebra of skew-symmetric derivations.  Then there exists a  linear map $\phi: \mathfrak s \to \mathfrak k$ such that  $\mathfrak r = (id + \phi) \mathfrak s$.  

One can, of course, start with a linear map $\phi: \mathfrak s \to \mathfrak k$ and try to list the conditions on $\phi$ such that $\mathfrak r = (id + \phi)\mathfrak s$ is a Lie subalgebra of $\mathfrak k\ltimes \mathfrak s$; that is, list all the modifications of $\mathfrak s$.  Although this is difficult to do for a general solvmanifold, it turns out to be simple in the case of $S$ being a solvsoliton.

\begin{thm}\label{thm: construction of ricci solitons on solvable} 
Let $\mathfrak s$ be a completely solvable Lie algebra whose simply-connected Lie group $S$ admits a solvsoliton.  Fix a choice $\mathfrak k$ of maximal compact subalgebra of $Der(\mathfrak s)$ (i.e. the Lie algebra of a maximal compact subgroup of $Aut(S)$).  Consider a linear map $\phi: \mathfrak s \to \mathfrak k$ satisfying 
	\begin{enumerate}
	\item $[\mathfrak s,\mathfrak s]\subset Ker~\phi$
	\item $\phi(\mathfrak s)$ is abelian
	\end{enumerate}
Then $\mathfrak r = (id +\phi)\mathfrak s$ is a solvable Lie algebra whose simply-connected Lie group $R$ admits a Ricci soliton metric.  Conversely, ever solvable Lie group which admits a Ricci soliton metric arises by this construction.
\end{thm}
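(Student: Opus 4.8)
The plan is to prove Theorem~\ref{thm: construction of ricci solitons on solvable} in two directions, with most of the work already assembled in the preceding sections. For the forward direction, I would start from a linear map $\phi:\mathfrak s\to\mathfrak k$ satisfying conditions (i) and (ii) and verify that $\mathfrak r=(id+\phi)\mathfrak s$ is in fact a subalgebra of $\mathfrak k\ltimes\mathfrak s$. The natural computation is to take $X,Y\in\mathfrak s$ and expand
\[
[(id+\phi)X,(id+\phi)Y]=[\phi(X),\phi(Y)]+\phi(X)Y-\phi(Y)X+[X,Y]
\]
in $\mathfrak k\ltimes\mathfrak s$. Condition (ii) kills the bracket $[\phi(X),\phi(Y)]$, and since $\phi(X),\phi(Y)\in\mathfrak k\subset Der(\mathfrak s)$ send $\mathfrak s$ into $\mathfrak n\subset[\mathfrak s,\mathfrak s]$... more precisely into $\mathfrak n$, the mixed terms $\phi(X)Y-\phi(Y)X$ land in $\mathfrak n$, and condition (i) guarantees $\phi$ annihilates $[\mathfrak s,\mathfrak s]\supset$ this image, so the bracket equals $(id+\phi)\bigl([X,Y]+\phi(X)Y-\phi(Y)X\bigr)$, confirming closure. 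Solvability of $\mathfrak r$ is then immediate since $\mathfrak r$ projects isomorphically onto the solvable $\mathfrak s$. The remaining point in this direction is that $R$ admits a Ricci soliton: since $R$ is a modification of the solvsoliton $S$, it acts isometrically on $(S,\langle\ ,\ \rangle)$ via the embedding $R<K\ltimes S$, exactly as in the analysis of Section~\ref{sec: simple connectivity}, and hence $\widetilde{\mathcal M}\simeq S$ carries the algebraic (indeed solvsoliton) metric, which as a Riemannian manifold is the Ricci soliton realized through the transitive $R$-action.

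For the converse, I would invoke the structural results already proven. By Theorem~\ref{thm: Ricci soliton solvmanifolds are isometric to solsolitons}, any solvable Lie group $R$ admitting a Ricci soliton metric yields a solvmanifold isometric to a solvsoliton, and its simply-connected cover is $\{S,g\}$ with $S$ completely solvable in standard position. The theory of modifications from \cite{GordonWilson:IsomGrpsOfRiemSolv}, as recalled around Proposition~\ref{prop: prop 2.4 from GW}, then presents $R$ as a modification $\mathfrak r=(id+\phi)\mathfrak s$ for some modification map $\phi:\mathfrak s\to\mathfrak k$. Proposition~\ref{prop: prop 2.4 from GW}(i) already gives that $\phi(\mathfrak s)$ is abelian, which is condition (ii). Condition (i), namely $[\mathfrak s,\mathfrak s]\subset Ker~\phi$, is precisely the content of Lemma~\ref{lemma: modifications of s1}(i), which was established exactly for modifications of a solvsoliton. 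Thus every such $R$ arises from the construction, completing the converse.

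The main obstacle, and the step I would treat most carefully, is the converse's dependence on Lemma~\ref{lemma: modifications of s1}(i): the statement that modifications of a solvsoliton are automatically \emph{normal} (in the Gordon--Wilson sense) is what makes condition (i) free, and this is genuinely special to solvsolitons rather than to arbitrary solvmanifolds, as the accompanying remark stresses. I would make sure the hypothesis that $S$ is a solvsoliton (and completely solvable) is used in full force here, since for a general solvmanifold one cannot expect $[\mathfrak s,\mathfrak s]\subset Ker~\phi$. A secondary point worth checking is the identification $Aut(S)\leftrightarrow Aut(\mathfrak s)$ used to pass between $\phi$ as a map into skew-symmetric derivations and the geometric modification, which is legitimate precisely because $S$ is simply-connected; I would state this explicitly to keep the bookkeeping between the Lie group and Lie algebra levels clean.
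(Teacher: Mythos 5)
Your converse direction is essentially the paper's argument (Theorem \ref{thm: Ricci soliton solvmanifolds are isometric to solsolitons} to reduce to a completely solvable solvsoliton $S$ in standard position, the modification presentation $\mathfrak r=(id+\phi)\mathfrak s$, Lemma \ref{lemma: modifications of s1}(i) for condition (i), and solvability of $\mathfrak r$ / Prop.~\ref{prop: prop 2.4 from GW} for condition (ii)), and it is fine. The genuine gap is in the forward direction, at the closure-under-bracket step. Once condition (ii) kills $[\phi(X),\phi(Y)]$, what must be shown is that $[X,Y]+\phi(X)Y-\phi(Y)X$ lies in $\mathfrak r\cap\mathfrak s=Ker~\phi$; condition (i) disposes of $[X,Y]$, so the entire content of the step is $\phi(X)Y-\phi(Y)X\in Ker~\phi$. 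You justify this by asserting that the image of $\phi(X)$ is contained in $[\mathfrak s,\mathfrak s]$. That is false: a derivation of a solvable algebra maps into the nilradical $\mathfrak n$, and the containment runs $[\mathfrak s,\mathfrak s]\subset\mathfrak n$, not the reverse (your text wavers on which way it goes). Condition (i) only gives $[\mathfrak s,\mathfrak s]\subset Ker~\phi$, which says nothing about elements of $\mathfrak n\setminus[\mathfrak s,\mathfrak s]$.

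This step cannot be closed by pure linear algebra from (i) and (ii): take $\mathfrak s=\mathbb R^2$ abelian (a flat solvsoliton), $\mathfrak k=\mathfrak{so}(2)$, $\phi(e_1)=J$ the rotation generator and $\phi(e_2)=0$. Conditions (i) and (ii) hold, yet $\phi(e_1)e_2=-e_1\notin Ker~\phi=\mathbb R e_2$, so $(id+\phi)\mathbb R^2$ is not closed under the bracket of $\mathfrak{so}(2)\ltimes\mathbb R^2$. The paper closes this step by appealing to \cite[Prop.~2.4]{GordonWilson:IsomGrpsOfRiemSolv} to identify condition (i) with $\phi(\mathfrak s)\mathfrak s\subset Ker~\phi$; whatever one makes of that citation, your substitute argument does not work, and this is where the real content of the forward direction lies — you need the additional input $\phi(X)Y\in Ker~\phi$ from somewhere. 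Two smaller omissions: you never invoke the lemma preceding the theorem, which produces a solvsoliton metric with $\mathfrak k=\mathfrak{so}(\ip{\cdot,\cdot})\cap Der(\mathfrak s)$ for the \emph{given} maximal compact $\mathfrak k$ (without this, the skew-symmetry of $\phi(\mathfrak s)$ relative to the soliton metric, which the whole modification machinery uses, is unavailable); and one should remark that different choices of maximal compact yield isomorphic constructions because they are all conjugate.
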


Before proving this theorem, we present the following lemma.

\begin{lemma}  Let $\mathfrak s$ be a completely solvable algebra whose Lie group $S$ admits a solvsoliton.  Let $\mathfrak k$ be a maximal compact subalgebra of $Der(\mathfrak s)$.  Then there exists a solvsoliton metric $\ip{\cdot, \cdot}$ such that $\mathfrak k = \mathfrak{so}(\ip{\cdot,\cdot}) \cap Der(\mathfrak s)$.   
\end{lemma}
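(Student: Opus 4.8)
The plan is to start from any solvsoliton inner product and transport it by an automorphism of $\mathfrak s$ so that its skew-symmetric derivations become the prescribed $\mathfrak k$. Let $\ip{\cdot,\cdot}_0$ be a solvsoliton inner product on $\mathfrak s$ (one exists by hypothesis) and set $\mathfrak k_0 = \mathfrak{so}(\ip{\cdot,\cdot}_0) \cap Der(\mathfrak s)$. The argument reduces to three ingredients: (a) $\mathfrak k_0$ is a \emph{maximal} compact subalgebra of $Der(\mathfrak s)$; (b) any two maximal compact subalgebras of $Der(\mathfrak s)$ are conjugate by an element of $Aut(\mathfrak s)$; and (c) pulling back a solvsoliton by an automorphism yields a solvsoliton whose skew-symmetric derivations are correspondingly conjugated. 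Granting these, I would take $g \in Aut(\mathfrak s)$ with $g\,\mathfrak k_0\,g^{-1} = \mathfrak k$ (possible by (a) and (b), since $\mathfrak k$ is maximal compact by assumption) and produce the desired metric by (c).

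For ingredient (c), I would define $\ip{X,Y} := \ip{g^{-1}X, g^{-1}Y}_0$. Then $\{S,\ip{\cdot,\cdot}\}$ is isometric to $\{S,\ip{\cdot,\cdot}_0\}$ via the automorphism $g$, so its Ricci operator is conjugate to $Ric_0$ by $g$; since $Der(\mathfrak s)$ is stable under conjugation by automorphisms, this operator has the form $cId + (\text{derivation})$ and $\ip{\cdot,\cdot}$ is again a solvsoliton. A direct computation shows that an endomorphism $E$ lies in $\mathfrak{so}(\ip{\cdot,\cdot}) \cap Der(\mathfrak s)$ if and only if $g^{-1}Eg$ lies in $\mathfrak{so}(\ip{\cdot,\cdot}_0)\cap Der(\mathfrak s)$, whence $\mathfrak{so}(\ip{\cdot,\cdot})\cap Der(\mathfrak s) = g\,\mathfrak k_0\,g^{-1} = \mathfrak k$, as required.

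For ingredient (a) -- which I expect to be the main obstacle -- the key point is that the solvsoliton inner product makes $Der(\mathfrak s)$ \emph{self-adjoint}, i.e. stable under the transpose $D \mapsto D^t$ taken with respect to $\ip{\cdot,\cdot}_0$. This is exactly the feature of the structure theory of solvsolitons established in \cite{Lauret:SolSolitons} (cf. the reductivity results of \cite{Heber}), and it is the only genuinely non-formal input of the proof. Granting self-adjointness, $Aut(\mathfrak s)$ is a self-adjoint real algebraic subgroup of $GL(\mathfrak s)$, so the standard Cartan decomposition for such groups shows that $O(\ip{\cdot,\cdot}_0) \cap Aut(\mathfrak s)$ is a maximal compact subgroup, with Lie algebra $\mathfrak{so}(\ip{\cdot,\cdot}_0) \cap Der(\mathfrak s) = \mathfrak k_0$; hence $\mathfrak k_0$ is maximal compact in $Der(\mathfrak s)$.

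Finally, for ingredient (b), since $Aut(\mathfrak s)$ is a linear algebraic group it has finitely many connected components, so all of its maximal compact subgroups are conjugate (Cartan--Iwasawa--Malcev), and correspondingly all maximal compact subalgebras of $Der(\mathfrak s)$ are conjugate by an element $g \in Aut(\mathfrak s)$. Applying this to $\mathfrak k_0$ and the given $\mathfrak k$ furnishes the conjugating automorphism needed in (c), completing the proof. Everything except the self-adjointness in step (a) is transport of structure together with the conjugacy theorem for maximal compact subgroups.
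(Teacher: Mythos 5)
Your ingredients (b) and (c) are sound, and the overall strategy (transport a fixed solvsoliton's compact symmetry onto the prescribed $\mathfrak k$ by an automorphism) is a legitimate rearrangement of the argument. The gap is in ingredient (a), specifically in its justification: it is \emph{not} true that a solvsoliton inner product makes $Der(\mathfrak s)$ stable under transpose. Take $\mathfrak s = \mathbb R A \oplus \mathbb R X$ with $[A,X]=X$ and the inner product making $A,X$ orthonormal; this metric is that of $\mathbb{RH}^2$, hence Einstein and in particular a solvsoliton, but $Der(\mathfrak s) = \{ D : DA = bX,\ DX = dX \}$ is not closed under transpose: the transpose of the derivation $A\mapsto X$, $X\mapsto 0$ sends $X$ to $A$, which is not a derivation (derivations of a solvable algebra must map into the nilradical $\mathbb R X$). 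Neither \cite{Lauret:SolSolitons} nor \cite{Heber} asserts self-adjointness of $Der(\mathfrak s)$; what they provide are reductivity and commutation statements for the nilradical $\mathfrak n$, for the pre-Einstein derivation, and for $ad\,\mathfrak a|_{\mathfrak n}$, none of which transfer to the full derivation algebra. So the Cartan-decomposition argument you propose for the maximality of $\mathfrak k_0 = \mathfrak{so}(\ip{\cdot,\cdot}_0)\cap Der(\mathfrak s)$ does not get off the ground.

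The conclusion of (a) is nevertheless true, but it is essentially the maximal-symmetry theorem for solvsolitons, \cite[Theorem~4.1]{Jablo:ConceringExistenceOfEinstein}, whose proof is a genuine GIT/stratification argument rather than transport of structure. That is exactly the input the paper uses, arranged in the opposite direction: starting from the given $\mathfrak k$, one picks \emph{any} inner product $\ip{\cdot,\cdot}'$ with $\mathfrak k \subset \mathfrak{so}(\ip{\cdot,\cdot}')\cap Der(\mathfrak s)$ (equality by maximality and conjugacy of maximal compacts), and Step~1 of the cited proof produces a solvsoliton metric $\ip{\cdot,\cdot}$ with $Aut(\mu)\cap O(\ip{\cdot,\cdot}') \subset Aut(\mu)\cap O(\ip{\cdot,\cdot})$; maximality of $\mathfrak k$ then forces equality. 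If you replace your step (a) by a citation of that theorem, your (b)+(c) route closes up; as written, the one ``genuinely non-formal input'' you identified is supported by a false statement.
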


\begin{proof}
	This lemma follows immediately from the proof of \cite[Theorem 4.1]{Jablo:ConceringExistenceOfEinstein}.  To apply that work, first pick an inner product $\ip{\cdot, \cdot}'$ such that $\mathfrak k \subset \mathfrak{so}(\ip{\cdot,\cdot}') \cap Der(\mathfrak s)$.  (As maximal compact groups are all conjugate, the maximality of $\mathfrak k$ implies this inclusion is an equality.)
	
	In the statement of \cite[Theorem 4.1]{Jablo:ConceringExistenceOfEinstein}, the groups considered are unimodular, completely solvable, and admit solvsoliton metrics.  However, unimodularity is only used to understand the isometry groups of those solvsolitons.  The result we claim is proven in Step 1, on the middle of page 747, loc. cit.  In the notation of that work, it is shown
	$$    Aut(\mu) \cap O(\ip{\cdot,\cdot}) \subset  Aut(\mu) \cap O(   (g^{-1})^* \ip{\cdot,\cdot})   $$
where the first inner product is generic and the second is a solvsoliton.  Thus we have a soliton metric $\ip{\cdot,\cdot}$ such that 
	$$   \mathfrak k = \mathfrak{so}(\ip{\cdot,\cdot}') \cap Der(\mathfrak s) \subset \mathfrak{so}(\ip{\cdot,\cdot}) \cap Der(\mathfrak s)  $$
By maximality, we see that $\mathfrak k = \mathfrak{so}(\ip{\cdot,\cdot}) \cap Der(\mathfrak s)$.
\end{proof}

\begin{proof}[Proof of Theorem \ref{thm: construction of ricci solitons on solvable}]
We first remark that the choice of maximal compact subgroup of $Aut(S)$ does not matter as maximal compact subgroups are all conjugate, hence the constructions using one maximal compact will be isomorphic to the constructions using any other.  By the lemma above, we may assume throughout that $\mathfrak s$ is endowed with the solvsoliton metric $\ip{\cdot, \cdot}$ such that $\mathfrak k = \mathfrak{so}(\ip{\cdot,\cdot}) \cap Der(\mathfrak s)$.

Next, we show that  conditions (i) and (ii) ensure that $\mathfrak r$ is a Lie subalgebra of $\mathfrak k\ltimes \mathfrak s$; i.e. $\mathfrak r$ is closed under the bracket of $\mathfrak k\ltimes \mathfrak s$.  By (ii), for $X,Y\in\mathfrak s$, we have 
	$$[X + \phi(X) , Y + \phi(Y)] = [X,Y] + \phi(X)Y - \phi(Y)X$$
Furthermore, (i) implies  $[X + \phi(X) , Y + \phi(Y)]  \in \mathfrak r$ if and only if $\phi(X)Y - \phi(Y)X \in\mathfrak r$.    Using  \cite[Prop.~2.4]{GordonWilson:IsomGrpsOfRiemSolv}, we see that (i) is equivalent to $\phi(X)Y\in Ker~\phi = \mathfrak r \cap \mathfrak s$, for all $X,Y\in\mathfrak s$.  This shows $\mathfrak r$ is a Lie subalgebra of $\mathfrak k\ltimes \mathfrak s$.  
That the Lie algebra   $\mathfrak r$ is solvable follows from the observation that $[\mathfrak r,\mathfrak r]\subset \mathfrak s$.

Let $R$ denote the simply-connected Lie group with Lie algebra $\mathfrak r$.  By \cite[Theorem 3.1]{GordonWilson:IsomGrpsOfRiemSolv}, $R$ acts transitively on $S$ endowed with its solvsoliton metric.  As such, $R$ inherits a left-invariant metric which is Ricci soliton.
\medskip

We now prove the converse.  Let $R$ be a solvable Lie group with Ricci soliton metric.  By Theorem \ref{thm: Ricci soliton solvmanifolds are isometric to solsolitons}, $R$ is isometric to a solvsoliton $S$ which we can assume is completely solvable.  As $R$ acts transitively on $S$, and completely solvable groups are always in standard position (\cite[Theorem 4.3]{GordonWilson:IsomGrpsOfRiemSolv}), the group $R$ is a modification of $S$.  That is, there exists a map $\phi : \mathfrak s \to \mathfrak k$ such that $\mathfrak r = (id + \phi) \mathfrak s$.  By Lemma \ref{lemma: modifications of s1}, we see that (i) holds.  Condition (ii) holds as $\mathfrak r$ is a solvable Lie algebra.
\end{proof}

\subsection*{Characterization of solvsolitons}   Theorem \ref{thm: construction of ricci solitons on solvable} shows how to obtain all solvable Lie groups admitting Ricci soliton metrics by modifying a completely solvable Lie group admitting a solvsoliton.  We now characterize those modifications which are solvsolitons.  In the following,  $\mathfrak n(\mathfrak r)$ denotes the nilradical of $\mathfrak r$.

\begin{prop}\label{cor: R solvsoliton if and only if n in Ker phi}  Let $R$ be a solvable Lie group endowed with a Ricci soliton metric.  The following are equivalent.
	\begin{enumerate}
	\item $R$ is a solvsoliton.
	\item Describing $R$ as a modification with $\mathfrak r = (id + \phi) \mathfrak s$ (as in Theorem \ref{thm: construction of ricci solitons on solvable}), we have $\mathfrak n  \subset Ker~\phi$, where $\mathfrak n$ is the nilradical of $\mathfrak s$.
	\item The elements of $ad~\mathfrak n (\mathfrak r)^\perp$ are reductive with eigenvalues which are not all purely imaginary.
	\item The set $\mathfrak n(\mathfrak r)^\perp$ is an abelian subalgebra of $\mathfrak r$ which is $ad$-reductive and whose elements have eigenvalues which are not all purely imaginary.
\end{enumerate}
\end{prop}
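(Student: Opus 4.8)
The plan is to establish the cycle $(i)\Rightarrow(iv)\Rightarrow(iii)\Rightarrow(ii)\Rightarrow(i)$, working throughout with the solvsoliton metric $\ip{\ ,\ }$ on $\mathfrak s$ for which $\mathfrak k = \mathfrak{so}(\ip{\ ,\ })\cap Der(\mathfrak s)$ (as provided by the preceding lemma) and the orthogonal splitting $\mathfrak s=\mathfrak a\oplus\mathfrak n$ in which $ad\ A$ is symmetric on $\mathfrak n$ for all $A\in\mathfrak a$ (\cite[Thm.~4.8]{Lauret:SolSolitons}). Since the flat case is degenerate (there $\mathfrak s$ is abelian, $\mathfrak a=0$, and the complement acts skew-symmetrically, so the eigenvalue condition in (iii)--(iv) precisely excludes it), I would assume $R$ is non-flat, so $\mathfrak a\neq 0$. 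The organizing observation is that on $\mathfrak n(\mathfrak r)^\perp$ the $ad$-action decomposes into a symmetric piece coming from $ad_{\mathfrak s}\mathfrak a$ (real eigenvalues) and a skew piece coming from $\phi$ (purely imaginary eigenvalues), and these commute: as skew derivations annihilate $\mathfrak a$ (Lemma \ref{lemma: commuting derivations}(i)), for $A,A'\in\mathfrak a$ one has $[\phi(A'),ad\ A]=ad(\phi(A')A)=0$.

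For $(i)\Rightarrow(iv)$ I would invoke Lauret's structural description of solvsolitons \cite[Thm.~4.8]{Lauret:SolSolitons}: the orthogonal complement $\mathfrak n(\mathfrak r)^\perp$ is an abelian subalgebra acting reductively, and non-flatness forces the symmetric (hence real-eigenvalue) part of this action to be nonzero, i.e.\ the eigenvalues are not all purely imaginary. The implication $(iv)\Rightarrow(iii)$ is immediate, as (iii) is (iv) with the ``abelian subalgebra'' clause deleted. For $(ii)\Rightarrow(i)$, assume $\mathfrak n\subset Ker\ \phi$; then $N+\phi(N)=N$ for $N\in\mathfrak n$, so $\mathfrak r=(id+\phi)\mathfrak a\oplus\mathfrak n$ with $\mathfrak n$ an ideal. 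Using that $\phi(\mathfrak s)$ is abelian (Prop.~\ref{prop: prop 2.4 from GW}) and vanishes on $\mathfrak a$, the complement $(id+\phi)\mathfrak a$ is abelian, while on $\mathfrak n$ the element $(id+\phi)A$ acts by the commuting sum $ad\ A+\phi(A)$ of a symmetric and a skew operator; this is normal (reductive), with nonzero real part since $A\notin\mathfrak n\Rightarrow ad\ A|_{\mathfrak n}\neq 0$ (otherwise $A$ is central, hence in $\mathfrak n$). Thus $\mathfrak n$ is exactly the nilradical of $\mathfrak r$ carrying its nilsoliton metric, and $R$ meets the structural criterion for a solvsoliton; I would conclude via Lauret's converse that $R$ admits a derivation $D'$ with $Ric=cId+D'$.

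The crux is $(iii)\Rightarrow(ii)$, which I would prove by contraposition. Suppose $\phi(N)\neq 0$ for some $N\in\mathfrak n$; by Lemma \ref{lemma: modifications of s1}(i) such $N$ lies outside $[\mathfrak s,\mathfrak s]$. The modified vector $\widetilde N=N+\phi(N)\in\mathfrak r$ has $ad_{\mathfrak r}\widetilde N$ assembled from the nilpotent operator $ad_{\mathfrak s}N$ and the nonzero skew (hence semisimple) operator $\phi(N)$; in particular $ad_{\mathfrak r}\widetilde N$ is not nilpotent, so $\widetilde N\notin\mathfrak n(\mathfrak r)$ and has a nonzero component in $\mathfrak n(\mathfrak r)^\perp$. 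I would then show this component's $ad$-action carries a genuine nilpotent part inherited from $ad_{\mathfrak s}N$, contradicting the reductivity hypothesis (iii). The main obstacle is precisely this bookkeeping: $ad_{\mathfrak s}N$ and $\phi(N)$ need not commute, so the Jordan decomposition of $ad_{\mathfrak r}\widetilde N$ cannot simply be read off from its symmetric and skew parts. I expect to overcome this by first pinning down $\mathfrak n(\mathfrak r)$ explicitly from the modification data --- along the lines of the nilradical computations of Gordon--Wilson \cite{GordonWilson:IsomGrpsOfRiemSolv} --- so that $ad_{\mathfrak r}$ is already known to be nilpotent on $\mathfrak n(\mathfrak r)$; the nonzero nilpotent part of $ad_{\mathfrak r}\widetilde N$ then necessarily surfaces on $\mathfrak n(\mathfrak r)^\perp$, forcing a non-reductive element there. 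Once (ii) is in hand, the complement $(id+\phi)\mathfrak a$ is abelian by the computation used for $(ii)\Rightarrow(i)$, which also re-confirms (iv) and closes the cycle.
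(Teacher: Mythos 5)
Your outer implications line up with the paper: (ii)$\Rightarrow$(i)$\Rightarrow$(iv) are quoted from \cite[Thm.~4.8]{Lauret:SolSolitons} there too, and (iv)$\Rightarrow$(iii) is immediate. The problem is (iii)$\Rightarrow$(ii), which is the entire content of the proposition and which you do not actually prove: you name the obstacle yourself (that $ad~N$ and $\phi(N)$ need not commute, so the Jordan decomposition of $ad(N+\phi(N))$ cannot be read off) and then defer its resolution to an unspecified computation of $\mathfrak n(\mathfrak r)$. Worse, the strategy you sketch --- exhibit an element of $ad~\mathfrak n(\mathfrak r)^\perp$ with a nonzero nilpotent part, contradicting \emph{reductivity} --- cannot succeed in general. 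Take $\mathfrak s$ abelian with $\phi\neq 0$ (the $\mathfrak t\ltimes\mathbb R^m$ factor of Example \ref{ex: non-algebraic soliton}): there $ad_{\mathfrak s}\equiv 0$, every element of $ad~\mathfrak n(\mathfrak r)^\perp$ is skew-symmetric, hence reductive, and (iii) fails only through the purely-imaginary-eigenvalue clause. So any correct proof of (iii)$\Rightarrow$(ii) must run through the eigenvalue condition, not through non-reductivity.

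The paper's argument dissolves your non-commutativity obstacle by choosing the element differently. If (ii) fails, then since the modification is normal (Lemma \ref{lemma: modifications of s1}) one has $\mathfrak n(\mathfrak r)\subset\mathfrak n\cap Ker~\phi$, hence $\mathfrak a\subset\mathfrak n(\mathfrak r)^\perp$ and $\mathfrak n(\mathfrak r)^\perp=\mathfrak a\oplus\bigl(\mathfrak n(\mathfrak r)^\perp\cap\mathfrak n\bigr)$ with the second summand nonzero. Now take $X\in\mathfrak n(\mathfrak r)^\perp\cap\mathfrak n$, $X\neq 0$, rather than an arbitrary $N$ with $\phi(N)\neq 0$. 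By normality $\phi(X)$ is a skew-symmetric derivation of $\mathfrak r$, so it maps into $\mathfrak n(\mathfrak r)$ while preserving $\mathfrak n(\mathfrak r)^\perp$; therefore $\phi(X)X=0$, which gives $[ad~X,\phi(X)]=ad(\phi(X)X)=0$ \emph{for this particular $X$}. The eigenvalues of $ad~X+\phi(X)$ are then sums of eigenvalues of the nilpotent operator $ad~X$ (all zero) and the skew operator $\phi(X)$ (purely imaginary), so all eigenvalues are purely imaginary --- contradicting (iii) whether or not the element is reductive. Hence $\mathfrak n(\mathfrak r)^\perp=\mathfrak a$ and $\mathfrak n=\mathfrak n(\mathfrak r)\subset Ker~\phi$. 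Your observation that the flat case is genuinely excluded by the eigenvalue clause in (iii)--(iv) is a fair point about the statement, but it does not repair the missing implication.
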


\begin{remark}  We have included (iv) above to demonstrate that one can determine when a Ricci soliton on a solvable group is a solvsoliton using only a finite amount of information; namely, one can check (iv) on a basis of $\mathfrak n(\mathfrak r)^\perp$. \end{remark}

This proposition gives a characterization of algebraic (and hence semi-algebraic solitons, cf. Lemma \ref{lemma: evolving by aut implies algebraic}) on solvable Lie groups.   In general, it seems to be a very difficult problem to characterize when a homogeneous Ricci soliton $(M,g)$ is algebraic or semi-algebraic  relative  to a given $G\subset Isom(M,g)$.  It is not even known if being semi-algebraic implies being algebraic, outside the class of solvmanifolds.  For more on this problem, we direct the interested reader to the  work \cite{LauretLafuente:StructureOfHomogeneousRicciSolitonsAndTheAlekseevskiiConjecture}.

Before starting the  proof of the proposition, we recall  from the work of Lauret \cite[Theorem 4.8]{Lauret:SolSolitons} 
that we know  every solvsoliton $\mathfrak s$ may be decomposed as an orthogonal direct sum
	$$\mathfrak s = \mathfrak a + \mathfrak n$$
where $\mathfrak n$ is the nilradical of $\mathfrak s$ and $\mathfrak a$ is an abelian.

\begin{proof}[Proof of Proposition \ref{cor: R solvsoliton if and only if n in Ker phi}]
The implications
	\begin{quote}
	(ii) implies (i) implies (iv)
	\end{quote}
follow immediatly from \cite[Theorem 4.8]{Lauret:SolSolitons}.  Clearly (iv) implies (iii).  

We finish by showing (iii) implies (ii).  As the modification $\mathfrak r$ of $\mathfrak s$ is a normal modification (Lemma \ref{lemma: modifications of s1}), we see that   $\mathfrak n(\mathfrak r) \subset \mathfrak n \cap Ker~\phi \subset \mathfrak n$ and so $\mathfrak a = \mathfrak n^\perp \subset \mathfrak n(\mathfrak r)^\perp$.  Thus $\mathfrak n(\mathfrak r)^\perp = \mathfrak a \oplus (\mathfrak n(\mathfrak r)^\perp \cap \mathfrak n)$.  (Here we are computing $\mathfrak n(\mathfrak r)^\perp$ as a subset of $\mathfrak s$ using the solvsoliton metric on $\mathfrak s$.)

Take $X\in \mathfrak n(\mathfrak r)^\perp \cap \mathfrak n$.  As our modification is normal, the derivation $\phi(X)\in Der(\mathfrak s)$ is also a derivation of $\mathfrak r$.  Derivations of solvable Lie algebras take their image the nilradical and using the skew-symmetry of $\phi(X)$ we see that $\phi(X) X = 0$.  Thus $[ad~X,\phi(X)]=0$.  Now we see that the eigenvalues of $ad~X+\phi(X)$ are the sums of eigenvalues of $\phi(X)$ and $ad~X$.  As the later are zero, $ad~X+\phi(X)$ has only purely imaginary and by hypothesis $X=0$.  That is, $\mathfrak n(\mathfrak r)^\perp =\mathfrak a$ and so $\mathfrak n = \mathfrak n(\mathfrak r) \subset Ker~\phi$, as desired.

\end{proof}

\bibliographystyle{amsalpha}

\providecommand{\bysame}{\leavevmode\hbox to3em{\hrulefill}\thinspace}
\providecommand{\MR}{\relax\ifhmode\unskip\space\fi MR }
\providecommand{\MRhref}[2]{%
  \href{http://www.ams.org/mathscinet-getitem?mr=#1}{#2}
}
\providecommand{\href}[2]{#2}

\end{document}